\documentclass{amsart}

% The following AMS packages are automatically loaded with amsart 
% documentclass:
%\usepackage{amsmath}

\usepackage{amssymb}
\usepackage{amsthm}
\usepackage{mathrsfs}
\usepackage{enumerate} 
\usepackage{color}

\usepackage{enumitem} %<----- testing this may  7 2012

\usepackage{rotating}

% For commutative diagrams
\usepackage[all]{xy}

% To put source file link in headers.
% Change "template.tex" to "this_filename.tex"
\usepackage{fancyhdr}
\pagestyle{fancy}
\lhead{}
\chead{}
\rhead{}
\lfoot{}
\cfoot{\thepage}
\rfoot{}

% For cross-file-references
\usepackage{xr-hyper}

% Package for hypertext links:
\usepackage[colorlinks=true]{hyperref}

%\usepackage{showlabels} %<---added dec 2011 

% The macro \autoref uses the macros \figurename, etc.
% We list the default values and we change some of them
% to start with a captial.
% Figure	\figurename
% Table		\tablename
% Part		\partname
% Appendix	\appendixname
% Equation	\equationname
% item		\Itemname
% \renewcommand{\Itemname}{Item}

% chapter	\Chaptername
% \renewcommand{\Chaptername}{Chapter}
% \renewcommand{\Chapterautorefname}{Chapter}
% 	\sectionname

% subsection	\subsectionname
       %<-----------------changed so the word "Subsection" doesn't appear in document

% subsubsection	\subsubsectionname

% paragraph	\paragraphname

% footnote	\Hfootnotename
% \renewcommand{\Hfootnotename}{Footnote}

% Equation	\AMSname
% Theorem	\theoremname

% Theorem environments.
%
\newtheorem{theorem}{Theorem}[section] %was[subsection] before
\newtheorem{proposition}[theorem]{Proposition}
\newtheorem{lemma}[theorem]{Lemma}

\newtheorem{corollary}[theorem]{Corollary}

\theoremstyle{definition}
\newtheorem{definition}[theorem]{Definition}
\newtheorem{example}[theorem]{Example}

\newtheorem{notation}[theorem]{Notation}

\theoremstyle{remark}
\newtheorem{remark}[theorem]{Remark}
\newtheorem{remarks}[theorem]{Remarks}

\newtheorem{properties}[theorem]{Properties}

\numberwithin{equation}{section}

\newcounter{myenum}

\makeatletter
\renewcommand\thetheorem{\@arabic\c@section.\@arabic\c@theorem}
\newcounter{subtheorem}
\setcounter{subtheorem}{1} \@addtoreset{subtheorem}{theorem}
\renewcommand\thesubtheorem{\thetheorem.\@arabic\c@subtheorem}
\newcommand\subtheorem{\stepcounter{subtheorem}\par\noindent\protect\textbf{\textup{(\thesubtheorem)}}\quad}
\makeatother

\newcommand{\cal}{\mathcal}
\newcommand{\bb}{\mathbb}
\newcommand{\scr}{\mathscr}

\newcommand{\comment}[1]{}

\newcommand{\xym}{\xymatrix}

\newcommand{\into}{\hookrightarrow}

\newcommand{\A}{\mathbb{A}^1}

\DeclareMathOperator{\spec}{Spec}
\DeclareMathOperator{\id}{id}

\DeclareMathOperator{\Exc}{Exc}

\DeclareMathOperator{\lf}{l.f.}
\DeclareMathOperator{\Ind}{Ind}

\DeclareMathOperator{\hocolim}{hocolim}
\newcommand{\oline}{\overline}

\newcommand{\leftsub}[2]{{\vphantom{#2}}_{#1}{#2}}

\begin{document}

\title{Cohomology theories with supports}
\author{Joseph Ross}
\address{University of Southern California Mathematics Department, 3620 South Vermont Avenue, Los Angeles, California}
\urladdr{http://www-bcf.usc.edu/~josephr/}
\email{josephr@usc.edu}

%
%\subjclass[2010]{Primary xxAxx}

%\keywords{group actions}

%\date{the date}

%
\maketitle
%\begin{abstract}
%\end{abstract}

\begin{abstract} For $E$ a presheaf of spectra on the category of smooth $k$-schemes satisfying Nisnevich excision, we prove that the canonical map from the algebraic singular complex of the theory $E$ with quasi-finite supports to the theory $E$ with supports intersecting all the faces properly is a weak equivalence on smooth $k$-schemes that are affine or projective.  This establishes some cases of a conjecture of Marc Levine.
\end{abstract}

\section{Introduction} \label{sec intro}

Let $k$ be a field, let ${\bf{Sm}}/k$ denote the category of smooth separated $k$-schemes of finite type, and let ${\bf{Spt}}(k)$ denote the category of presheaves of spectra on ${\bf{Sm}} / k$.  A cohomology theory is a presheaf of spectra satisfying certain conditions.  Such a theory may be filtered according to the (co)dimension of a subscheme supporting a cohomology class.  We describe now two ways of obtaining such a filtration, both of which make use of the cosimplicial object $\Delta^\bullet$ in ${\bf{Sm}}/k$.

For $E \in {\bf{Spt}}(k)$ and $X \in {\bf{Sm}} / k$, first define $\overline{E}^Q(X) := \hocolim_{Z \in Q} E^Z(X \times \mathbb{A}^q)$, where $Q$ consists of cycles on $X \times \mathbb{A}^q$ that are quasi-finite and dominant over (a component of) $X$.  This is the spectrum with quasi-finite supports.  Now define $E^Q$ to be the algebraic singular complex of the presheaf $\oline{E}^Q$, that is, $E^Q(X) = \oline{E}^Q(\Delta^\bullet \times X) := | n \mapsto \oline{E}^Q(\Delta^n \times X)|$.  This construction is functorial for all morphisms of schemes: $E^Q \in {\bf{Spt}}(k)$.

For $E \in {\bf{Spt}}(k)$ and $X \in {\bf{Sm}} / k$, we set $E^{(q)}(X,n) := \hocolim_{Z \in S^{(q)}} E^Z( \Delta^n \times X)$, where $S^{(q)}$ consists of those cycles on $\Delta^n \times X$ intersecting all faces in codimension at least $q$, i.e., properly.  This gives a simplicial spectrum, and we may form its total spectrum $E^{(q)}(X) :=| n \mapsto E^{(q)}(X,n) |$.  This construction is the evident generalization of the cycles used by Bloch \cite{Blhigher} to define higher Chow groups.  Note that $E^{(q)}$ is functorial for flat morphisms of schemes. 

These constructions can be compared.  Cycles on $\Delta^\bullet \times X \times \bb{A}^q$ that are quasi-finite over $\Delta^\bullet \times X$ are cycles of codimension at least $q$ intersecting all faces of $\Delta^\bullet$ in codimension at least $q$.  Levine \cite[Thm.~3.3.5]{MLChow} has shown that, if $k$ is infinite, the projection $\bb{A}^1 \times X \to X$ induces a weak equivalence $E^{(q)}(\bb{A}^1 \times X ) \cong E^{(q)}(X)$ for any $X \in {\bf{Sm}}/k$.  Thus there is a canonical map $\alpha_X: E^Q(X) \into E^{(q)}(X \times \bb{A}^q) \cong E^{(q)}(X)$.  Our main result is that $\alpha_X$ is a weak equivalence for a broad class of theories $E$, but with some restrictions on $X$.

\begin{theorem}[$\ref{main comparison affine}, \ref{main comparison projective}$] Let $k$ be an infinite field.  Let $E \in {\bf{Spt}}(k)$ be a presheaf of spectra satisfying Nisnevich excision (see Section $\ref{coh thy defn}$).  Suppose $X$ is a smooth equidimensional $k$-scheme that is either affine or projective.  Then the canonical map $\alpha_X : E^Q(X) \to E^{(q)}(X)$ is a weak equivalence. \end{theorem}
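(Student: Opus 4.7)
The plan is to interpret $\alpha_X$ as a levelwise inclusion of sub-posets of cycles, and to produce a moving lemma witnessing cofinality of this inclusion up to an $\bb{A}^1$-homotopy absorbed by the singular complex. After invoking Levine's homotopy invariance $E^{(q)}(X \times \bb{A}^q) \simeq E^{(q)}(X)$, both sides are realizations of simplicial spectra of the form $n \mapsto \hocolim_W E^W(\Delta^n \times X \times \bb{A}^q)$, with indexing posets consisting, respectively, of cycles quasi-finite over $\Delta^n \times X$ (source) and of codimension-$q$ cycles meeting all faces of $\Delta^n$ properly (target). Since quasi-finiteness implies proper intersection with faces, $\alpha_X$ is induced by a levelwise inclusion of indexing categories, and the claim is that this inclusion becomes a weak equivalence after $\hocolim$ and geometric realization.

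The heart of the argument would be a moving lemma generalizing Bloch's for higher Chow groups to the cycles-with-supports setting. Given a cycle $Z$ meeting all faces of $\Delta^\bullet$ properly, I would construct an $\bb{A}^1$-parameter family $\{Z_t\}$ on $\Delta^\bullet \times X \times \bb{A}^q$ with $Z_0 = Z$, each $Z_t$ of codimension $q$ meeting faces properly, and $Z_1$ quasi-finite over $\Delta^\bullet \times X$. The family would arise as the orbit of $Z$ under an algebraic group action, and the two geometric hypotheses on $X$ are handled differently. In the affine case, embed $X \hookrightarrow \bb{A}^N$ closed and act by a subgroup of affine automorphisms of the ambient $\bb{A}^{N+q}$; a dimension count shows that a generic translate of $Z$ is quasi-finite over $\Delta^\bullet \times X$, and joining the translate to the identity by a line yields the family. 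In the projective case, embed $X \hookrightarrow \PP^N$ closed and appeal to a Chow-style moving lemma using $\mathrm{PGL}_{N+1}$, where projectivity supplies the boundedness required for a uniform general-position statement across all faces simultaneously. Nisnevich excision then converts the geometric family into a spectral homotopy: the total family $\mathcal{Z} \subset \Delta^\bullet \times X \times \bb{A}^q \times \bb{A}^1$ is a support in the right codimension, excision relates the fiber spectra $E^{Z_0}$ and $E^{Z_1}$, and the algebraic singular complex absorbs the $\bb{A}^1$-parameter into the simplicial direction, producing a homotopy inverse to $\alpha_X$.

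The main obstacle, I expect, is the coherence of the moving lemma with the full simplicial structure of $\Delta^\bullet$: the group element chosen to move a given cycle at level $n$ must be compatible with face and degeneracy maps, in the spirit of Bloch's refined higher-Chow moving lemma and Levine's extensions of it, and the moved cycle must remain quasi-finite uniformly across all faces. A secondary subtlety, in the projective case, is reconciling a moving performed in a projective compactification of $X \times \bb{A}^q$ with the requirement that $Z_1$ be quasi-finite specifically over the affine $\bb{A}^q$ factor; this is typically handled by a careful choice of compactification together with general-position conditions at infinity, so that the moved cycle avoids the boundary divisor.
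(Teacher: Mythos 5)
Your reduction of the problem to a simplicially coherent moving lemma is the right framing, and you correctly identify the two main difficulties (coherence across faces, and control at infinity in the projective case). But the mechanism you propose for the move --- a generic translate of $Z$ under affine automorphisms of an ambient $\bb{A}^{N+q} \supset X \times \bb{A}^q$, or under $\mathrm{PGL}_{N+1}$ acting on $\PP^N \supset X$ --- does not work, and this is the essential gap. Those group actions do not preserve $X \times \bb{A}^q$ (nor even $X$), so the translate of a cycle $Z \subset \Delta^n \times X \times \bb{A}^q$ is no longer a support on the scheme in question; and a general smooth affine or projective $X$ has essentially no automorphisms of its own, so there is no intrinsic group available to act. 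The quasi-finiteness one needs is over $\Delta^n \times X$ for the projection from $\Delta^n \times X \times \bb{A}^q$, and achieving it by a Suslin-type endomorphism (shearing the base coordinates by polynomials in the fiber coordinates) requires the base to be an affine space. This is precisely why the paper never moves cycles on a general $X$ directly: it moves them only on $\Delta^\bullet \times \bb{A}^d \times \bb{A}^q$ (resp.\ $\bb{P}^d$, after extending the moving morphisms so as not to disturb a generically chosen hyperplane at infinity), and transfers the resulting nullhomotopies to $X$ through a finite morphism $f : X \to \bb{A}^d$ chosen birational on the given supports, using Nisnevich excision away from the exceptional locus of $f|_Z$.

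Two further ingredients of the paper are absent from your sketch and are not cosmetic. First, the comparison cannot be done in one step on a general $X$: the cycle $f^*f_*Z$ differs from $Z$ by residual components that are not quasi-finite, only \emph{less} degenerate than $Z$. The paper therefore interpolates between $Q$ and $S^{(q)}$ by a tower of support conditions $S^{(q),e,f}$ recording the maximal fiber dimension $e$ and the codimension $f$ of the locus of worst fibers, and shows each step of the tower collapses; your poset-cofinality picture has no room for this induction. Second, the supports pulled back from $\Delta^n \times \bb{A}^q$ (the induced supports) are invisible to any morphism $X \to \bb{A}^d$ and must be handled by a separate argument. Your appeal to ``excision converts the family into a spectral homotopy'' also elides the point that one must control exactly which supports appear along the homotopy (the $F^*Z$ construction) in order to see the nullhomotopy inside the colimit computing the cofiber.
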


For $E \in {\bf{Spt}}(k)$ quasi-fibrant and $X \in {\bf{Sm}} /k$, Levine conjectured $\alpha_X$ is a weak equivalence \cite[Conj.~10.1]{ST}.  Levine also observes that, using the identification of the $q$th slice $s_qE$ of $E$ with the cofiber of $E^{(q+1)} \to E^{(q)}$ \cite[Thm.~1.1]{MLHtyCon}, the conjecture would give an explicit functorial geometric model for the slice tower of an $S^1$-spectrum.  Our result gives a functorial geometric model for the slice tower on affine and projective schemes, namely the tower $\cdots E^{Q, q+1} \to E^{Q,q} \cdots$ determined by any sequence of coordinate embeddings $\cdots \bb{A}^q \into \bb{A}^{q+1} \cdots$.  We note that a functorial model for $E^{(q)}$ is obtained in \cite[Sect.~7]{MLChow} using a moving argument adapted to a nonflat morphism, together with the Dwyer-Kan construction.  See Remark $\ref{ML funct}$ for further discussion.

Our result has a structural interpretation in terms of the injective Nisnevich-local $\A$-model structure on ${\bf{Spt}}(k)$.  Given a fibrant $E \in {\bf{Spt}}(k)$ for this model structure, a basic problem is to find a fibrant model for $E^{(q)}$.  Regarding the $\A$-part of the model structure, a useful feature of the algebraic singular complex is that it ``creates" homotopy invariance: if $\cal{F} \in {\bf{Spt}}(k)$ is a presheaf of spectra, then the assignment $X \mapsto |n \mapsto \cal{F}(\Delta^n \times X)| =: \cal{F}(\Delta^\bullet \times X)$ is a homotopy invariant presheaf of spectra on ${\bf{Sm}} /k$ \cite[Prop.~7.2]{FS}.  In particular, for any $E \in {\bf{Spt}}(k)$, the presheaf $E^Q \in {\bf{Spt}}(k)$ is homotopy invariant.  Since $E^{(q)}$ satisfies Nisnevich excision, in some sense the main question of this paper is whether the operation $E \mapsto E^Q$ preserves Nisnevich-fibrant objects (see Remark $\ref{Nis exc remark}$ for further discussion).  A fibrant presheaf can be used to compute mapping spaces in the homotopy category $\cal{SH}_{S^1}(k)$, e.g., cohomology groups.  In view of the fundamental role of the operation $E \mapsto E^{(q)}$, it is interesting to obtain an explicit fibrant model for $E^{(q)}$ in ${\bf{Spt}}(k)$.

The map $\alpha_X$ has been studied for particular theories $E$.  The comparison map is a weak equivalence for sheaves of equidimensional cycles, where the Suslin complex $C_*(-)$ plays the role of the algebraic singular complex.  A case of Friedlander-Voevodsky duality \cite[Thms.~7.1,7.4]{BCC} implies $z_{equi}(X, \bb{A}^q,0) \to z_{equi}(X \times \bb{A}^q, \dim(X))$ induces a quasi-isomorphism of Suslin complexes.  We have also Suslin's comparison of equidimensional cycles with Bloch cycles \cite[Thm.~3.2]{Sus}, namely a quasi-isomorphism $C_*(z^q_{equi}(X)) \to z^{(q)}(\Delta^\bullet \times X)$. Thus the map $z^Q(\Delta^\bullet \times X \times \bb{A}^q) \to z^{(q)}(\Delta^\bullet \times X \times \bb{A}^q) \cong z^{(q)}(\Delta^\bullet \times X)$ is a quasi-isomorphism, at least if $k$ admits resolution of singularities.  Our result can then be viewed as a form of duality for a rather general theory $E$, with $E^Q$ playing the role of $E$-cohomology, and $E^{(q)}$ that of $E$-homology.

For algebraic $K$-theory, the comparison map was used by Friedlander-Suslin in the construction of a spectral sequence relating motivic cohomology to algebraic $K$-theory \cite{FS}; this is a motivic analogue of the spectral sequence of Atiyah-Hirzebruch relating the singular cohomology of a topological space to its topological $K$-theory.  More precisely, the map $\alpha_X$ was shown to be a weak equivalence for $X$ the spectrum of a field or a semilocal ring.  See Remark $\ref{FS comparison}$ for further discussion.  Algebraic $K$-theory satisfies Nisnevich excision by a result of Thomason-Trobaugh \cite[Prop.~3.19]{TT}, hence our result generalizes the comparison results of Friedlander-Suslin.

Our techniques are primarily geometric in nature.  First we show that the map $\alpha_{\bb{A}^d}$ is a weak equivalence; this result is valid for any presheaf $E \in {\bf{Spt}}(k)$ and the proof follows closely \cite[Sec.~8]{FS}.  The main ingredient allowing us to move simplicial closed subsets on affine space is the moving lemma of Suslin \cite[Thm.~1.1]{Sus}, which moves a given family to an equidimensional family.  Then, assuming $E$ satisfies Nisnevich excision, we obtain the result for a general smooth affine $X$ by choosing a finite morphism $f: X \to \bb{A}^d$ adapted to a particular collection of supports on $X$.  An important technical device is the definition of a sequence of support conditions refining the inclusion of the quasi-finite supports into the supports intersecting all the faces properly.  We prove the resulting tower of spectra has contractible cofibers for $\bb{A}^d$, and that the discrepancy is sufficiently small that cycles on a smooth affine $X$ can be moved to a smaller support condition in the filtration.  The basic principle goes back at least to the classical form of Chow's moving lemma, the core of which is the idea that the cycle $f^*(f_*(Z)) -Z$ on $X$ should have better intersection properties than $Z$ itself.  Thus we adopt the viewpoint of Levine \cite{MLChow}.

The argument for a smooth projective $X$ is similar.  We analyze the proof \cite[Thm.~1.1]{Sus}, carrying along a hyperplane.  The key geometric input (Theorem $\ref{projective suslin}$) is that the moving morphisms for affine space can be extended to projective space, without disturbing the support condition at infinity.  This is not a direct generalization of Suslin's moving lemma to the projective case, but after a careful choice of hyperplane section, some excision arguments allow us to more-or-less appeal to the study of $\alpha_X$ for $X$ affine.  The basic difficulty presented by the general smooth quasi-projective case is that our support conditions are not preserved by taking closures; see Remark $\ref{qproj failure}$ for further discussion.

\smallskip

\textbf{Acknowledgments.} This problem was suggested to the author by Marc Levine.  The author wishes to thank him for a series of very helpful conversations during which many of the basic ideas presented here emerged.  This work was begun while the author was a wissenschaftlicher Mitarbeiter at the Universit\"at Duisburg-Essen.

\section{Support conditions and cohomology theories} 

\subsection{Support conditions} \label{sec supports}

Let $\Delta^n $ denote the algebraic $n$-simplex $\spec k[t_0, \ldots , t_n] / (t_0 + \cdots + t_n = 1)$.  Letting $n$ vary and using the coface and codegeneracy maps (see, e.g., \cite[p.~268]{Blhigher}), we obtain the cosimplicial object $\Delta^\bullet$ in ${\bf{Sm}}/k$.  The goal of this paper is to compare various \textit{support conditions} $S(X)$ on cohomology theories evaluated on the cosimplicial schemes $\Delta^\bullet \times X \times \bb{A}^q$ for $X \in {\bf{Sm}} / k$.

Suppose $S_n(X)$ is a collection of closed subschemes of $\Delta^n \times X \times \bb{A}^q$ that is closed under finite unions and under taking closed subschemes.  Then the collections $\{ S_n(X) \}_n$ comprise a support condition if they form a simplicial subset of the simplicial set whose $n$-simplices are the closed subschemes of $\Delta^n \times X \times \bb{A}^q$ \cite[Defn.~2.3.1]{MLChow}.  In this context we often refer to the closed subschemes themselves as supports.  The support condition $S(X)$ is of codimension $\geq q$ if, for all $n$, every $Z \in S_n(X)$ is of codimension $\geq q$ on $\Delta^n \times X \times \bb{A}^q$.  The collections $Q$ and $S^{(q)}$ defined in Section $\ref{sec intro}$ are support conditions of codimension $\geq q$ on $\Delta^\bullet \times X \times \bb{A}^q$.  Support conditions are partially ordered by inclusion.  In particular, a collection of subschemes $\{Z_i^n \into \Delta^n \times X \times \bb{A}^q\}_n$ is said to generate a support condition $S(X)$ if $S$ is the smallest support condition containing all the $Z_i^n$s.  We use the notation $Z' \into Z \subset S(X)$ to mean every member of $Z'$ is contained in some member of $Z$.

Let $X$ be a smooth equidimensional $k$-scheme.  For $e \in \bb{Z}_{\geq 0}$, let $S^{(q), e}_n(X) \subset S^{(q)}_n(X)$ denote the set of codimension $\geq q$ subschemes $Z \into \Delta^n \times X \times \bb{A}^q$ having the additional property that the maximal fiber dimension of the canonical morphism $Z \to \Delta^n \times X$ is $e$.  The condition on fiber dimension is preserved by change of the base $\Delta^n \times X$, in particular by the coface and codegeneracy maps, so the sets $S^{(q),e}_n(X)$ define a support condition on $\Delta^\bullet \times X \times \bb{A}^q$.

To use geometric arguments we will need to further refine the filtration
\begin{equation} \label{dim support tower} Q(X) = S^{(q),0}(X) \subset S^{(q),1}(X) \subset \cdots \subset S^{(q),q-1}(X) \subset S^{(q),q}(X) = S^{(q)}(X). \end{equation}

To this end, let $S^{(q), e, f}_n(X) \subset S^{(q), e}_n(X)$ denote the set of subschemes $Z$ such that the image of the locus of $e$-dimensional fibers has codimension $\geq f$ in $\Delta^n \times X$, and continues to have codimension $\geq f$ on each face $\Delta^m \times X \into \Delta^n \times X$.  Since the codimension of the image of the locus of maximal dimension fibers is preserved by flat base change, this too defines a support condition.  Now suppose $X$ is of dimension $d$.  Then if $e$ and $n$ are fixed, by varying $f$ we obtain an increasing chain of support conditions:
\begin{equation} \label{support tower} S^{(q), e, d+n}_n(X) \subset S^{(q), e, d+n-1}_n(X) \subset \cdots \subset S^{(q), e, 0}_n(X) = S^{(q),e}_n(X). \end{equation}

Here we collect some elementary properties of our support conditions.

\begin{properties}  \label{elem support properties}
 \begin{enumerate}
\item (finite covariant functoriality) The support conditions $S^{(q),e,f}(-)$ are compatible with pushforward by a finite (flat) morphism in ${\bf{Sm}} /k$.  For such a morphism $f : X \to Y$, we have $f_*(S^{(q),e,f}_n(X)) \subseteq S^{(q),e,f}_n(Y)$. \label{elem support properties 2}
\item (flat contravariant functoriality) The support conditions $S^{(q),e,f}(-)$ are compatible with pullback by a flat morphism in ${\bf{Sm}} /k$.  For such a morphism $f : X \to Y$, we have $f^* (S^{(q),e,f}_n(Y)) \subseteq S^{(q),e,f}_n(X)$. \label{elem support properties 3}
\item (openness in families) Let $B \in {\bf{Sm}}/k$ be a base scheme, and let $Z \in S^{(q)}_n(X \times B)$ be a codimension $q$ cycle on $\Delta^n \times X \times B \times \bb{A}^q$.  Suppose for $b \in B$, the fiber $Z_b$ belongs to $S^{(q),e,f}_n(X \times \{b \})$.  Then there exists an open neighborhood $b \in B' \subseteq B$ such that $Z_{b'} \in S^{(q),e,f}_n(X \times \{b' \})$ for all $b' \in B'$. \label{elem support properties 4}
   \end{enumerate}
\end{properties}

\subsection{Cohomology theories}

Here we explain some notation from the introduction and some constructions, and discuss axioms for the cohomology theories $E$ we will consider.

A spectrum $E$ is a sequence of pointed simplicial sets $E_0, E_1, E_2, \ldots$ together with bonding maps $S^1 \wedge E_i \to E_{i+1}$.  A morphism of spectra is a sequence of pointed morphisms of simplicial sets compatible with the bonding maps, and ${\bf{Spt}}$ denotes the category of spectra.  A presheaf of spectra on ${\bf{Sm}}/k$ is an additive functor $E : {({\bf{Sm}} / k)}^{op} \to {\bf{Spt}}$, i.e., a functor with the property that $E(X \coprod Y) \to E(X) \oplus E(Y)$ is a weak equivalence for all $X,Y \in {\bf{Sm}}/k$.

Let $E$ be a presheaf of spectra on ${\bf{Sm}} /k$.  We consider the following conditions on $E$.
\begin{enumerate} \label{coh thy defn}
\item (Nisnevich excision) Let $f : X \to Y$ be an \'etale morphism in ${\bf{Sm}} / k$, and suppose $W \into Y$ is a closed subscheme such that the induced map $f^{-1}(W) \to W$ is an isomorphism of reduced schemes.  Then $f$ induces a weak equivalence $E^W(Y) \xrightarrow{\sim} E^{f^{-1}(W)}(X)$.
\item (homotopy invariance) Let $X \in {\bf{Sm}} / k$ and let $p : X \times \A \to X$ denote the projection.  Then $p$ induces a weak equivalence $E(X) \xrightarrow{\sim} E(X \times \A)$.
\end{enumerate}

\begin{remark} For some time we will consider arbitrary $E \in {\bf{Spt}}(k)$, but we need to impose Nisnevich excision starting in Section $\ref{general affine}$ to obtain the comparison result for affine varieties.  Our results do not require the hypothesis of homotopy invariance.  The relevance of this condition is the $\A$-part of the model structure on presheaves of spectra.  \end{remark}

For $Z \into X$ a closed subscheme of the smooth $k$-scheme $X$ and $E \in {\bf{Spt}} (k)$, we define the spectrum on $X$ with supports in $Z$, denoted $E^Z(X)$, to be the homotopy fiber of the morphism $E(X) \to E(X - Z)$.  This is the spectrum whose homotopy groups fit into a long exact sequence with those of $E(X)$ and $E(X - Z)$.  The spectrum $E^Z(X)$ depends only on the support of $Z$ and not on its scheme structure.

If $S(X)$ is a support condition and $E \in {\bf{Spt}}(k)$ is a presheaf of spectra on ${\bf{Sm}} / k$, then $n \mapsto E^{S_n}(X) := E^{S_n(X)}(\Delta^n \times X \times \bb{A}^q) := \hocolim_{Z \in S_n(X)}E^Z(\Delta^n \times X \times \bb{A}^q)$ forms a simplicial spectrum.  Then we may form the total spectrum $E^S(X) := |n \mapsto E^{S_n}(X)|$.  If $S_1(X)$ and $S_2(X)$ are support conditions and $S_1(X) \subseteq S_2(X)$, there is a canonical morphism $E^{S_1}(X) \to E^{S_2}(X)$.

We will write $E^{(q)}(X)$ for $E^{S^{(q)}(X)}(X)$, and we set $E^{(q),e}(X) := E^{S^{(q),e}(X)}(X)$.  The filtration $\ref{dim support tower}$ induces a sequence of canonical morphisms 
$$E^Q(X) \to E^{(q),1}(X) \to \cdots \to E^{(q), q-1}(X) \to  E^{(q), q}(X) = E^{(q)}(X)$$
refining the comparison map $\alpha_X : E^Q(X) \to E^{(q)}(X)$.  

By imposing the condition $n \leq N$, we obtain truncated spectra $E^{(q),e}(X)_{\leq N}$ and morphisms of spectra $E^{(q),e,f}(X)_{\leq N} \to E^{(q),e,f-1}(X)_{\leq N}$ (with the evident notation) fitting into a tower:
$$ \cdots \to E^{(q),e-1,0}(X)_{\leq N} \to E^{(q),e,d+N}(X)_{\leq N} \to E^{(q),e,d+N-1}(X)_{\leq N} \to \cdots. $$
refining the canonical map $E^Q(X)_{\leq N} \to E^{(q)}(X)_{\leq N}.$  We use the filtration as follows.  We will show that for any $N$, the morphism $E^{(q),e-1,0}(X)_{\leq N} \to E^{(q),e,d+N}(X)_{\leq N}$ and all of the morphisms $E^{(q),e,f}(X)_{\leq N} \to E^{(q),e,f-1}(X)_{\leq N}$ are weak equivalences.  Since the $r$th homotopy group of a simplicial spectrum depends only on some truncation (in simplicial degree depending on $r$), it follows that $E^{(q),e-1}(X) \to E^{(q),e}(X)$ is a weak equivalence for every $e$.  Hence also $\alpha_X : E^Q(X) \to E^{(q)}(X)$ is a weak equivalence.

\begin{remarks} \label{coh remarks} \subtheorem Throughout we assume $k$ is infinite to guarantee the existence of morphisms to affine (and projective) space with certain properties.  Levine's results \cite{MLChow}, e.g., the homotopy invariance of $E^{(q)}$ cited in the introduction, can be obtained for $k$ a finite field by imposing a Galois invariance hypothesis \cite[Axiom 4.1.3]{MLChow}.  It seems reasonable to assume the results obtained here can be similarly extended in the presence of such a hypothesis.
\subtheorem We work in roughly the same generality as \cite{MLChow}; in particular we do not require $E$ to be homotopy invariant.  The localization axiom (4.1.1) of \cite{MLChow} is automatic in our setting since we consider presheaves of spectra on ${\bf{Sm}} /k$ rather than presheaves on some category of pairs in ${\bf{Sm}} /k$.  See \cite[Defn.~2.1.1; 9.1]{MLChow}.  For completeness we simply record that for a sequence of closed immersions $Z' \into Z \into X$ with $X \in {\bf{Sm}} /k$, we have a fibration sequence $E^{Z'}(X) \to E^Z(X) \to E^{Z \setminus Z'}(X \setminus Z')$.
\subtheorem As a particular case of the Nisnevich excision axiom (Zariski excision), if $j : U \subset X$ is an open immersion in ${\bf{Sm}} /k$ and $Z \into X$ is a closed subscheme which happens to be contained in $U$, then $j$ induces a weak equivalence $E^Z(X) \xrightarrow{\sim} E^Z(U)$.  This weak equivalence yields the following excision result: let $f : X \to Y$ be a finite, generically \'etale morphism in ${\bf{Sm}} / k$ with ramification locus $R_f \into X$, let $Z' \into Z \into X$ be closed subschemes such that $f$ induces an isomorphism $Z \setminus Z' \cong f(Z \setminus Z')$, and suppose $Z' \supseteq R_f \cap Z$.  Then $f$ induces a weak equivalence $E^{f(Z) \setminus f(Z')}(Y \setminus f(Z')) \xrightarrow{\sim} E^{Z \setminus Z'}(X \setminus Z').$ 
\end{remarks}

\section{Moving supports on affine varieties}

\subsection{Moving via endomorphisms of $\Delta^\bullet \times \bb{A}^d \times \bb{A}^q$}

It is a theorem of Friedlander-Suslin \cite[Thm.~8.6]{FS} that for $E = K$, the spectrum of (Thomason) $K$-theory, the canonical maps $E^Q(\Delta^\bullet \times \bb{A}^q) \hookrightarrow E^{(q)}(\Delta^\bullet \times \bb{A}^q ) \hookleftarrow E^{(q)}(\Delta^\bullet)$ are weak equivalences.  In this section, following very closely the argument in \cite[Sec.~8]{FS}, we enhance this result in three ways:

\begin{enumerate}
\item replace $K$ with an arbitrary presheaf of spectra;
\item replace $\Delta^\bullet \times \bb{A}^q$ with $\Delta^\bullet \times \bb{A}^d \times \bb{A}^q$; and
\item replace $S^{(q)}(\bb{A}^d)$ with certain intermediate support conditions $Q(\bb{A}^d) \subseteq S(\bb{A}^d) \subseteq S^{(q)}(\bb{A}^d)$, especially those discussed in Section $\ref{sec supports}$.
\end{enumerate}

In this section we consider support conditions $S(X)$ of codimension $\geq q$ satisfying $Q(X) \subseteq S(X) \subseteq S^{(q)}(X)$, and $E \in {\bf{Spt}}(k)$ arbitrary.  We say $Z := \{Z_i^n \} \subset S_{\leq N}(X)$ is a finite subfamily of supports if the set $Z$ is finite.  This means there exists $N \in \bb{Z}_{\geq 0}$ and for each $0 \leq n \leq N$ there is specified a finite subset $\{Z_i^n \} \subset S_n(X)$.  By convention we assume the set $Z$ is closed under intersecting with faces $\Delta^n \into \Delta^N$, hence we can form the spectrum $E^Z(X)$, the total spectrum of the truncated simplicial spectrum $n \mapsto E^{ \{Z_i^n \} }(\Delta^n \times X \times \bb{A}^q)$.

\begin{definition} A \textit{pseudo-endomorphism} $\phi_\bullet$ of $\Delta^\bullet \times \bb{A}^d \times \bb{A}^q$ is a family of $\bb{A}^q$-morphisms $\phi_\bullet : \Delta^\bullet \times \bb{A}^d \times \bb{A}^q \to \Delta^\bullet \times \bb{A}^d \times  \bb{A}^q$ such that for every strictly increasing $\theta :[m] \to [n]$, the diagram
$$\xym{\Delta^m \times \bb{A}^d \times \bb{A}^q \ar[r]^-{\phi_m} \ar[d]^{\theta_* \times 1 \times 1} & \Delta^m \times \bb{A}^d \times \bb{A}^q \ar[d]^-{\theta_* \times 1 \times 1} \\
\Delta^n \times \bb{A}^d \times \bb{A}^q \ar[r]^-{\phi_n} & \Delta^n \times \bb{A}^d \times \bb{A}^q \\}$$
commutes.  

A \textit{homotopy} between $\phi_\bullet$ and the identity is a pseudo-endomorphism $\Phi_\bullet$ of $\Delta^\bullet  \times \A \times \bb{A}^d \times \bb{A}^q$ such that $i_0 \circ \phi_\bullet = \Phi_\bullet \circ i_0$ and $i_1 = \Phi_\bullet \circ i_1$ as morphisms $\Delta^\bullet \times \bb{A}^d \times \bb{A}^q \to \Delta^\bullet \times \A \times \bb{A}^d \times \bb{A}^q$.  \end{definition}

For a pseudo-endomorphism $\phi_\bullet$ of $\Delta^\bullet \times \bb{A}^d \times \bb{A}^q$ and a codimension $q$ support condition $S(\bb{A}^d)$, let $\leftsub{\phi}{S}_n(\bb{A}^d)$ denote the supports $Z \in S_n(\bb{A}^d)$ such that $\phi_n^{-1}(Z) \in S_n(\bb{A}^d)$.  This is again a support condition, and it satisfies $\leftsub{\phi}{S}(\bb{A}^d) \subset S(\bb{A}^d)$.  For each $n$ we have a map of spectra $\phi_n^* : E^{\leftsub{\phi}{S}_n}(\Delta^n \times \bb{A}^d \times \bb{A}^q) \to E^{S_n}(\Delta^n \times \bb{A}^d \times \bb{A}^q)$.  Since $\phi_\bullet$ is a pseudo-endomorphism these induce a map on Segal realizations
$$\phi^* : || n \mapsto | E^{\leftsub{\phi}{S}_n}(\Delta^n \times \bb{A}^d \times \bb{A}^q)| || \to || n \mapsto |E^{S_n}(\Delta^n \times \bb{A}^d \times \bb{A}^q)| ||.$$

From now on, we use the notation $|| E^S (\bb{A}^d) ||   := || n \mapsto | E^{S_n} (\Delta^n \times \bb{A}^d \times \bb{A}^q ) | ||$.

\begin{proposition}\label{FS 8.2} For any finite subfamily of supports $Z \subset S_{\leq N}(\bb{A}^d)$, there exists a pseudo-endomorphism $\phi_\bullet : \Delta^\bullet \times \bb{A}^d \times \bb{A}^q  \to \Delta^\bullet \times \bb{A}^d \times \bb{A}^q$ such that $\phi_n^{-1}(Z_i^n) \in Q_n( \bb{A}^d)$ for all $Z_i^n \in Z$.  \end{proposition}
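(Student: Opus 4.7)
The proof follows \cite[Prop.~8.2]{FS} closely. The key geometric input is Suslin's moving lemma \cite[Thm.~1.1]{Sus}: for any finite family of codimension $\geq q$ cycles on an affine space, there is a Zariski-dense open subset of a parameter space of polynomial moves whose corresponding pullback makes each cycle equidimensional over a chosen codimension $q$ subspace.

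I plan to take $\phi_n$ constant in the simplex coordinate: fix an $\bb{A}^q$-morphism $\tilde g : \bb{A}^d \times \bb{A}^q \to \bb{A}^d \times \bb{A}^q$ of the form $(x, y) \mapsto (g(x, y), y)$, and set $\phi_n := \id_{\Delta^n} \times \tilde g$, i.e., $\phi_n(t, x, y) = (t, g(x, y), y)$. Such a $\phi_n$ is visibly an $\bb{A}^q$-morphism, and since it is constant in $t$ the pseudo-endomorphism compatibility with every coface map $\theta_* \times 1 \times 1$ is automatic. The problem thus reduces to choosing $g$ so that $\phi_n^{-1}(Z_i^n)$ is quasi-finite and dominant over $\Delta^n \times \bb{A}^d$ for every $Z_i^n$ in the finite family $Z$.

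To produce such a $g$, view each $Z_i^n$ as a codimension $\geq q$ subscheme of the affine space $\Delta^n \times \bb{A}^d \times \bb{A}^q$ and apply Suslin's moving lemma over the codimension $q$ subspace $\Delta^n \times \bb{A}^d$. For each $(i, n)$ this yields a Zariski-dense open set of parameters for which the pulled-back cycle lies in $Q_n(\bb{A}^d)$. Since $Z$ is finite, the intersection of these open conditions is still dense, and any $k$-rational point of the intersection (which exists since $k$ is infinite) specifies the required $g$.

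The principal technical hurdle, already handled in \cite[Sec.~8]{FS} in the case $d = 0$, is to confirm that Suslin's moving lemma supplies a single $g$ whose pullback is equidimensional (hence quasi-finite) uniformly over the \emph{entire} base $\Delta^n \times \bb{A}^d$, not merely at a generic point, and that dominance is preserved. The additional factor $\bb{A}^d$ introduces no essentially new difficulty: one simply works with $\bb{A}^q$-morphisms of $\bb{A}^d \times \bb{A}^q$ in place of endomorphisms of $\bb{A}^q$ alone.
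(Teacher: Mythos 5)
Your reduction to a move that is constant in the simplex coordinates does not work, and this is where the real content of the proposition lies. A pseudo-endomorphism of the form $\phi_n = \id_{\Delta^n} \times \tilde g$, with $\tilde g$ an $\bb{A}^q$-morphism of $\bb{A}^d \times \bb{A}^q$, only shears the $\bb{A}^d$-coordinates; it can never repair a support whose failure of quasi-finiteness is located in the $\Delta^n$-direction. Concretely, with $q = d = 1$, $n = 1$, take $Z = V\bigl((t_0 - c)\,u(x)\bigr) \subset \Delta^1 \times \bb{A}^1_y \times \bb{A}^1_x$ with $u$ nonconstant and $c \neq 0,1$: this has codimension $1$ and meets both vertices in codimension $1$, so it is a legitimate member of $S^{(1)}_1(\bb{A}^1)$, but it contains the full fiber $\bb{A}^1_x$ over every point of $\{t_0 = c\} \times \bb{A}^1_y$. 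Since its equation involves neither $y$ nor is $t$ or $x$ touched by your $\phi_1$, we get $\phi_1^{-1}(Z) = Z$ for every choice of $g$, so no such $\phi_\bullet$ lands in $Q_1(\bb{A}^1)$. The degenerate case $d = 0$ makes the same point even more starkly: an $\bb{A}^q$-morphism of $\bb{A}^0 \times \bb{A}^q$ is the identity, so your class of moves is trivial, yet the $d=0$ statement is exactly Friedlander--Suslin's Proposition 8.2, which requires genuinely nontrivial moves of the simplex coordinates (e.g.\ $t_i \mapsto t_i + t_it_j a(x)$).

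The paper's proof therefore cannot avoid moving $\Delta^n$ itself, and the coface compatibility you dismiss as automatic is precisely the difficulty: it is earned by induction on the simplicial degree, using the relative form of Suslin's moving lemma (the endomorphism can be prescribed on a divisor $D \times S$) to force $\phi_n$ to agree on the boundary divisor $V(t_0 \cdots t_n) \times \bb{A}^d$ with the moves already constructed on lower-dimensional faces, while making $\phi_n^{-1}(Z^n_i)$ quasi-finite over the complement of that divisor; quasi-finiteness over the boundary itself comes from the inductive hypothesis. A secondary point: your appeal to ``a Zariski-dense open subset of a parameter space'' is not the form of Suslin's Theorem 1.1 being used here (genericity enters through choices of general homogeneous forms of large degree), but that is cosmetic compared to the structural gap above.
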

\begin{proof} By combining Suslin's moving lemma and induction on the simplicial degree $n$, \cite[Prop.~8.2]{FS} shows that a finite family of supports in $S^{(q)}_n(\spec k)$ (i.e., on $\Delta^n \times \bb{A}^q$) can be moved via a pseudo-endomorphism to a family of $\Delta^n$-quasi-finite supports.  The proof carries over to our setting.  Suslin's moving lemma \cite[Thm.~1.1]{Sus} asserts that given an affine $k$-scheme $S$, an effective divisor $D \into \bb{A}^n$, a closed subscheme $Z$ of $\bb{A}^n \times S$, and an $S$-morphism $\phi_D : D \times S \to \bb{A}^n \times S$, there exists an an $S$-endomorphism $\phi$ of $\bb{A}^n \times S$ transporting $Z$ to a closed subscheme having the correct fiber dimension over $\bb{A}^n  \setminus D$, and agreeing with $\phi_D$ on $D \times S$.

First we find, for each vertex $\Delta^0 \into \Delta^n$, a morphism $\phi_0 : \Delta^0 \times \bb{A}^d \times \bb{A}^q \to \Delta^0 \times \bb{A}^d \times \bb{A}^q$ moving $Z^0$ to a $\Delta^0 \times \bb{A}^d$-quasi-finite support.  (Here $Z^m := Z \cap (\Delta^m \times \bb{A}^d \times \bb{A}^q)$ denotes a face of $Z$.)  Suslin's lemma applies so long as $Z^0$ has codimension $q$, so we can replace the condition $S^{(q)}$ in \cite[Prop.~8.2]{FS} with a smaller support condition $S(\bb{A}^d) \subseteq S^{(q)}(\bb{A}^d)$.  

Then we find, for each face $\Delta^1 \into \Delta^n$, an endomorphism $\phi_1$ of $\Delta^1 \times \bb{A}^d \times \bb{A}^q$ moving $Z^1$ (at least away from the vertices) and agreeing with $\phi_0$ on $(\cup \Delta^0) \times \bb{A}^d \times \bb{A}^q$.  Because $\phi_0$ moves $Z^0$, we conclude $\phi_1^{-1}(Z^1)$ is quasi-finite over the vertices $(\cup \Delta^0) \times \bb{A}^d$ as well.  Furthermore, because the $\phi_1$'s agree on the vertices, they glue to an endomorphism of $(\cup \Delta^1) \times \bb{A}^d \times \bb{A}^q$ moving $Z |_{\cup \Delta^1}$.  We continue one simplicial degree at a time.  In the last stage, we may replace the divisor $V(t_0 \cdots t_n) \into \Delta^n$ with the divisor $V(t_0 \cdots t_n) \into \Delta^n \times \bb{A}^d$ in the proof of \cite[Prop.~8.2]{FS}. \end{proof}

\begin{corollary}\label{FS 8.3} For any compact subset $K \subset || E^S (\bb{A}^d)||$, there exists a pseudo-endomorphism $\phi_\bullet$ such that $K \subset || E^{\leftsub{\phi}{S}}(\bb{A}^d)||$ and $\phi^*(K) \subset || E^Q(\bb{A}^d)||$.
\end{corollary}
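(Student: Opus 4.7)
The plan is to reduce the corollary to Proposition \ref{FS 8.2} by a standard compactness argument in two stages: first truncating the simplicial direction, then reducing the homotopy colimit over supports to a finite subfamily.

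First I would use that $||E^S(\bb{A}^d)||$ is built from the bisimplicial spectrum $n \mapsto |E^{S_n}(\Delta^n \times \bb{A}^d \times \bb{A}^q)|$, whose Segal realization has a cell structure indexed by pairs consisting of a simplicial degree $n$ and a cell of $|E^{S_n}|$. Any compact $K$ meets only finitely many such cells, so in particular there exists $N$ with $K \subset ||E^S(\bb{A}^d)||_{\leq N}$ (the subspace obtained by restricting to simplicial degrees $\leq N$). Now inside each $|E^{S_n}(\Delta^n \times \bb{A}^d \times \bb{A}^q)|$, which is a filtered homotopy colimit $\hocolim_{Z \in S_n(\bb{A}^d)} E^Z(\Delta^n \times \bb{A}^d \times \bb{A}^q)$, the portion of $K$ lying in simplicial degree $n$ factors through finitely many supports. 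Taking the union over $n \leq N$ and closing under face maps (a finite operation), I obtain a finite subfamily $Z = \{Z_i^n\}_{n \leq N} \subset S_{\leq N}(\bb{A}^d)$ such that $K \subset ||E^Z(\bb{A}^d)||$, where the latter denotes the realization of the truncated simplicial spectrum $n \mapsto E^{\{Z_i^n\}}(\Delta^n \times \bb{A}^d \times \bb{A}^q)$.

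Now I apply Proposition \ref{FS 8.2} to this finite subfamily $Z$: this yields a pseudo-endomorphism $\phi_\bullet$ of $\Delta^\bullet \times \bb{A}^d \times \bb{A}^q$ with $\phi_n^{-1}(Z_i^n) \in Q_n(\bb{A}^d)$ for every $Z_i^n \in Z$. In particular each $Z_i^n$ lies in $\leftsub{\phi}{S}_n(\bb{A}^d)$, because $Q_n(\bb{A}^d) \subseteq S_n(\bb{A}^d)$. Since the collection $\leftsub{\phi}{S}(\bb{A}^d)$ is itself a support condition containing all the $Z_i^n$, the inclusion of support conditions $\{Z_i^n\} \subseteq \leftsub{\phi}{S}(\bb{A}^d)$ induces a map $||E^Z(\bb{A}^d)|| \to ||E^{\leftsub{\phi}{S}}(\bb{A}^d)||$, and hence $K \subset ||E^{\leftsub{\phi}{S}}(\bb{A}^d)||$.

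Finally, the pseudo-endomorphism $\phi_\bullet$ defines, via the $\phi_n^*$, a map $\phi^*: ||E^{\leftsub{\phi}{S}}(\bb{A}^d)|| \to ||E^S(\bb{A}^d)||$ factoring through $||E^Q(\bb{A}^d)||$ on the $Z$-part: more precisely, each map of spectra $E^{Z_i^n}(\Delta^n \times \bb{A}^d \times \bb{A}^q) \to E^{\phi_n^{-1}(Z_i^n)}(\Delta^n \times \bb{A}^d \times \bb{A}^q)$ lands, by construction, in a piece of the homotopy colimit defining $E^{Q_n}(\bb{A}^d)$. Passing to Segal realizations, $\phi^*(K) \subset ||E^Q(\bb{A}^d)||$, as desired. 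The main (mild) obstacle is organizing the compactness reduction cleanly so that the finite subfamily one extracts is closed under face maps and lives in bounded simplicial degree; once that bookkeeping is done, the corollary is a direct application of Proposition \ref{FS 8.2}.
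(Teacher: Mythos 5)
Your argument is correct and is exactly the intended deduction (the paper omits the proof, following \cite[Cor.~8.3]{FS}): compactness confines $K$ to bounded simplicial degree and a finite subfamily of supports closed under faces, Proposition \ref{FS 8.2} supplies $\phi_\bullet$, and the containments $K \subset || E^{\leftsub{\phi}{S}}(\bb{A}^d)||$ and $\phi^*(K) \subset || E^Q(\bb{A}^d)||$ follow from $\phi_n^{-1}(Z_i^n) \in Q_n(\bb{A}^d) \subseteq S_n(\bb{A}^d)$. No issues.
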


\begin{notation} \label{simplicial interval} Let $I_\bullet$ denote the simplicial set corresponding to the poset $\{ 0 < 1 \}$.  We denote by $0_\bullet \subset I_\bullet$ the constant simplicial subset at $0$: for every $n$ its unique $n$-simplex is the sequence $0 \leq \cdots \leq 0$.  Similarly we have $1_\bullet \subset I_\bullet$.  For $j \in I_n$ let $f_j : \Delta^n \to \Delta^n \times \A$ denote the (unique) linear morphism sending the $k^{th}$ vertex $v_k$ to $v_k \times j_k$.  We use the simplicial set $I_\bullet$ to construct homotopies; the geometric realization $|I_\bullet|$ is the unit interval $I$, and via this realization, $|0_\bullet|, |1_\bullet|$ correspond to $0,1$ respectively. \end{notation}

Following \cite{FS} we have the support condition $\leftsub{\Phi}{S}$ on $\Delta^\bullet \times \A \times \bb{A}^d \times \bb{A}^q$ consisting of subschemes in $S(\A \times \bb{A}^d)$ belonging to $S$ on $\Delta^\bullet \times \bb{A}^d \times \bb{A}^q$ after pullback by any morphism of the form $\Phi_n \circ (f_j \times 1 \times 1)$.  Similarly we have $\leftsub{\Phi}{S}$ on $\Delta^\bullet \times \bb{A}^d \times \bb{A}^q$, those subschemes that lie in $\leftsub{\Phi}{S}(\A \times \bb{A}^d)$ upon pullback via the projection $p \colon \Delta^\bullet \times \A \times \bb{A}^d \times \bb{A}^q \to \Delta^\bullet \times \bb{A}^d \times \bb{A}^q$.

\begin{proposition}\label{FS 8.4} Let $S$ be a support condition on $\Delta^\bullet \times \bb{A}^d \times \bb{A}^q$ of the form $S^{(q), e, f}(\bb{A}^d)$.  Then for any finite subfamily of supports $Z \subset \leftsub{\phi}{S}_{\leq N}(\bb{A}^d)$, there exists a homotopy $\Phi_\bullet : \Delta^\bullet \times \A \times \bb{A}^d \times \bb{A}^q  \to \Delta^\bullet \times \A \times \bb{A}^d \times \bb{A}^q$ between $\phi_\bullet$ and the identity such that $Z_i^n \in \leftsub{\Phi}{S}_n(\bb{A}^d)$ for all $Z_i^n \in Z$.  \end{proposition}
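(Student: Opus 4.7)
I would construct the homotopy $\Phi_\bullet$ by induction on simplicial degree, paralleling the proof of Proposition \ref{FS 8.2}. The only substantive change from that argument is that the effective divisor used in Suslin's moving lemma must now include the time endpoints $\{t=0\}$ and $\{t=1\}$ of the extra $\bb{A}^1$ factor, so that $\Phi_\bullet$ specializes to $\phi_\bullet$ at one end and to the identity at the other.

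At the stage corresponding to a face $\Delta^m \into \Delta^N$, I apply Suslin's moving lemma with base $\bb{A}^q$, affine space $\Delta^m \times \bb{A}^1 \times \bb{A}^d$, and effective divisor $D_m \subset \Delta^m \times \bb{A}^1 \times \bb{A}^d$ whose support is $(\partial \Delta^m \times \bb{A}^1 \times \bb{A}^d) \cup (\Delta^m \times \{0,1\} \times \bb{A}^d)$. On $D_m \times \bb{A}^q$ I prescribe the morphism as follows: on the $\partial\Delta^m$-part, by the previously constructed $\Phi_\ell$ with $\ell < m$; at $t=0$, by $i_0 \circ \phi_m$; and at $t=1$, by the identity. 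These choices agree on overlaps by the induction hypothesis, and Suslin's lemma extends them to an $\bb{A}^q$-endomorphism $\Phi_m$ of $\Delta^m \times \bb{A}^1 \times \bb{A}^d \times \bb{A}^q$ under which $\Phi_m^{-1}(p^*(Z_i^m))$ is quasi-finite over the open complement of $D_m$ in $\Delta^m \times \bb{A}^1 \times \bb{A}^d$. Compatibility with cofaces (making $\Phi_\bullet$ a pseudo-endomorphism) and the homotopy property (between $\phi_\bullet$ and the identity) then follow as in Proposition \ref{FS 8.2}.

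It then remains to verify $Z_i^n \in \leftsub{\Phi}{S}_n(\bb{A}^d)$: for every $j \in I_n$, the pullback $(\Phi_n \circ (f_j \times 1 \times 1))^*(p^*(Z_i^n))$ must lie in $S^{(q),e,f}_n(\bb{A}^d)$. At the constant simplices $j = 0_\bullet$ and $j = 1_\bullet$ this reduces to the hypothesis $\phi_n^*(Z_i^n) \in S_n$ and the trivial observation $Z_i^n \in S_n$, respectively. For intermediate $j$, the image $f_j(\Delta^n) \subset \Delta^n \times \bb{A}^1$ meets $D_n$ only in a proper subvariety; on the complement of $(f_j \times 1 \times 1)^{-1}(D_n \times \bb{A}^q)$, the slice of $\Phi_n^{-1}(p^*(Z_i^n))$ is quasi-finite by Suslin's conclusion, hence lies in $Q_n(\bb{A}^d) \subset S^{(q),e,f}_n(\bb{A}^d)$.

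The main technical obstacle I expect is controlling the slices on the exceptional locus: higher-dimensional fibers may appear along $(f_j \times 1 \times 1)^{-1}(D_n \times \bb{A}^q)$, and one must verify that their image still respects the codimension-$\geq f$ restriction defining $S^{(q),e,f}$. I anticipate this will follow from openness in families (part (3) of Properties \ref{elem support properties})---which propagates the $S^{(q),e,f}$ condition known at $j = 0_\bullet$ (and $j = 1_\bullet$) to a neighborhood in $I_\bullet$---combined with a sufficiently generic choice of the morphism prescribed on $D_m$, exploiting the flexibility in Suslin's lemma to force the exceptional locus to have codimension $\geq f$ on each face of $\Delta^n$.
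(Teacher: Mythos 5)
Your construction of $\Phi_\bullet$ (Suslin's lemma applied one simplicial degree at a time, with the divisor enlarged by $\{t=0\}\cup\{t=1\}$ and the values $i_0\circ\phi_m$ and the identity prescribed there) is the same candidate as in the paper, and your verification at $j=0_\bullet$ and $j=1_\bullet$ is correct. The gap is in the step that is the actual content of the proposition: showing that for intermediate $j\in I_n$ the slice ${(\Phi_n\circ(f_j\times 1\times 1))}^{-1}(p^{-1}(Z_i^n))$ retains the indices $e$ and $f$. Your proposed fix --- openness in families (Properties $\ref{elem support properties}(\ref{elem support properties 4})$) combined with a generic choice of the prescription on $D_m$ --- does not work. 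First, $I_n$ is a finite set of specific linear sections $f_j$, not an algebraic family, so there is no ``neighborhood in $I_\bullet$'' to which openness could propagate the condition; and even if you embed the $f_j$ in an algebraic family of sections of $\Delta^n\times\A\to\Delta^n$, openness only yields the condition on some Zariski-open set of parameters around $0_\bullet$ (resp.\ $1_\bullet$), which gives no control at the finitely many non-generic parameters $f_j$ you actually need. Second, genericity is unavailable exactly where it would be needed: the bad fibers of the intermediate slices lie over the faces of $\Delta^n$ that $f_j$ sends into $\Delta^n\times\{0,1\}$, and there $\Phi$ is forced to equal $i_0\circ\phi_m$ resp.\ the identity, so no choice of the data in Suslin's lemma can improve the exceptional locus.

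What closes the gap --- and is the paper's argument --- is a containment, not genericity: since $\Phi_n^{-1}(p^{-1}(Z_i^n))$ is quasi-finite over $\Delta^n\times(\A\setminus\{0,1\})\times\bb{A}^d$, the non-quasi-finite locus of the $f_j$-slice lies over the faces of $\Delta^n$ on which $j$ takes the value $0$ or $1$, and on those faces the slice coincides with the corresponding face of $\phi_n^{-1}(Z_i^n)$ (at $t=0$) or of $Z_i^n$ (at $t=1$). Both of these cycles lie in $S^{(q),e,f}_n(\bb{A}^d)$ --- the first by the hypothesis $Z\subset\leftsub{\phi}{S}_{\leq N}(\bb{A}^d)$, the second trivially --- and the conditions $S^{(q),e,f}$ are by definition stable under restriction to faces. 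Hence for arbitrary $j$ the locus of $e$-dimensional fibers, and the codimension of its image on $\Delta^n\times\bb{A}^d$ and on every face, can only improve relative to the two endpoint cycles, which is exactly what is required; no appeal to openness in families is needed here (that property enters only later, in the projective arguments).
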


\begin{proof} Suslin's moving lemma and induction on the index of the pseudo-endomorphism $\Phi_\bullet$ allow one to construct a candidate $\Phi_\bullet$ as in \cite[8.4]{FS}.  Since supports of the form $p^{-1}(S^{(q)}(\bb{A}^d))$ on $\Delta^\bullet \times \A \times \bb{A}^d \times \bb{A}^q$ have codimension $\geq q$ after pullback by $\Phi_n \circ (f_j \times 1 \times 1)$, the same is true of $p^{-1}(S(\bb{A}^d))$ for a smaller support condition $S$.  So it remains to check that the indices $e$ and $f$ are preserved.

The candidate $\Phi_\bullet$ has the property that 
$\Phi_n^{-1}( p^{-1}(Z_i^n))$ is quasi-finite over $\Delta^n \times (\A \setminus \{0,1 \} ) \times \bb{A}^d$.  Thus for arbitrary $j \in I_n$, the non quasi-finite locus of
$${(\Phi_n \circ (f_j \times 1 \times 1))}^{-1}( p^{-1}(Z_i^n))$$

is contained in the non quasi-finite locus of
$${(\Phi_n \circ (f_{0_n} \times 1 \times 1))}^{-1}( p^{-1}(Z_i^n)) \cup {(\Phi_n \circ (f_{1_n} \times 1 \times 1))}^{-1}(p^{-1}(Z_i^n)).$$

Over $\Delta^n \times \{ 0 \} \times \bb{A}^d$ we have the cycle $\phi_n^{-1}(Z_i^n)$, and over $\Delta^n \times \{ 1 \} \times \bb{A}^d$ we simply recover the cycle $Z_i^n$.   Thus by hypothesis we have ${(\Phi_n \circ (f_j \times 1 \times 1))}^{-1}( p^{-1}(Z_i^n)) \in S_n(\bb{A}^d)$ for $j = 0_n, 1_n$.  By the previous paragraph, the indices $e$ and $f$ for arbitrary $j \in I_n$ can only improve. \end{proof}

\begin{remark} If $\phi_\bullet$ satisfies the conclusion of Proposition $\ref{FS 8.2}$, then in the proof of Proposition $\ref{FS 8.4}$ we would only have to consider incidences of the form
$${(\Phi_n \circ (f_{j} \times 1 \times 1))}^{-1}( p^{-1}(Z_i^n)) \cap {(\Phi_n \circ (f_{1_n} \times 1 \times 1))}^{-1}(p^{-1}(Z_i^n)).$$  \end{remark}

\begin{proposition}\label{FS 8.5} Suppose $S$ is a support condition on $\Delta^\bullet \times \bb{A}^d \times \bb{A}^q$ of the form $S^{(q), e, f}(\bb{A}^d)$.  Let $\phi_\bullet$ be a pseudo-endomorphism of the cosimplicial scheme $\Delta^\bullet \times \bb{A}^d \times \bb{A}^q$.  Then the morphism
$$ || E^{\leftsub{\phi}{S}}(\bb{A}^d) || \xrightarrow{\phi^*} || E^S (\bb{A}^d ) ||$$
is weakly homotopic to the canonical inclusion map. \end{proposition}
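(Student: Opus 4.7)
The plan is to construct an explicit homotopy between $\phi^*$ and the canonical inclusion using the homotopy $\Phi_\bullet$ furnished by Proposition $\ref{FS 8.4}$. Since a map of Segal realizations is weakly null-homotopic if and only if its restriction to every compact subset of the source is, it suffices to show that for any compact $K \subset || E^{\leftsub{\phi}{S}}(\bb{A}^d) ||$ the restrictions of $\phi^*$ and the inclusion to $K$ are homotopic in $|| E^S(\bb{A}^d) ||$.

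First, compactness forces $K$ to be supported on a finite subfamily $Z \subset \leftsub{\phi}{S}_{\leq N}(\bb{A}^d)$ for some $N$. Apply Proposition $\ref{FS 8.4}$ to $Z$ to produce a homotopy $\Phi_\bullet$ of pseudo-endomorphisms between $\phi_\bullet$ and the identity such that $Z_i^n \in \leftsub{\Phi}{S}_n(\bb{A}^d)$ for every $Z_i^n \in Z$.

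Next, parametrize the homotopy by the simplicial interval $I_\bullet$ of Notation $\ref{simplicial interval}$. For each $n$ and $j \in I_n$ set
$$\psi_{n,j} := p \circ \Phi_n \circ (f_j \times 1 \times 1) \colon \Delta^n \times \bb{A}^d \times \bb{A}^q \to \Delta^n \times \bb{A}^d \times \bb{A}^q.$$
By the very definition of $\leftsub{\Phi}{S}$, the preimage $\psi_{n,j}^{-1}(Z_i^n) = (\Phi_n \circ (f_j \times 1 \times 1))^{-1}(p^{-1}(Z_i^n))$ lies in $S_n(\bb{A}^d)$ for every $Z_i^n \in Z$, so $\psi_{n,j}^*$ defines a map $E^{Z_i^n}(\Delta^n \times \bb{A}^d \times \bb{A}^q) \to E^{S_n}(\Delta^n \times \bb{A}^d \times \bb{A}^q)$. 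As $(n,j)$ varies, these maps are compatible with coface and codegeneracy (because $\Phi_\bullet$ is a pseudo-endomorphism and $p$ is a projection) and with the poset structure of $I_\bullet$ (because the $f_j$ are linear in $j$), hence they assemble into a morphism of bisimplicial spectra whose Segal realization is a continuous map $|I_\bullet| \times || E^Z(\bb{A}^d) || \to || E^S(\bb{A}^d) ||$.

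Finally, identify the two endpoints. At $j = 0_n$, the relation $\Phi_\bullet \circ i_0 = i_0 \circ \phi_\bullet$ combined with $p \circ i_0 = \id$ gives $\psi_{n,0_n} = \phi_n$, so the restriction to $|0_\bullet|$ recovers $\phi^*|_K$. At $j = 1_n$, the relation $\Phi_\bullet \circ i_1 = i_1$ combined with $p \circ i_1 = \id$ gives $\psi_{n,1_n} = \id$, so the restriction to $|1_\bullet|$ recovers the canonical inclusion restricted to $K$. Since $|I_\bullet|$ is the unit interval, this produces the required homotopy on $K$; as $K$ was arbitrary, $\phi^*$ and the canonical inclusion are weakly homotopic.

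The main obstacle is the bisimplicial bookkeeping in the middle step: verifying that the family $\{\psi_{n,j}^*\}$ assembles into a genuine morphism of bisimplicial spectra realizing to a homotopy with the correct endpoints, rather than merely a collection of disconnected maps. This is essentially the same bookkeeping as in \cite[Prop.~8.5]{FS}; the only novelty here is that, having already replaced $S^{(q)}$ with a smaller support condition $S$ of the form $S^{(q),e,f}$, we need Proposition $\ref{FS 8.4}$ in place of its $S^{(q)}$-analogue to guarantee that the preimages $\psi_{n,j}^{-1}(Z_i^n)$ still lie in $S_n$.
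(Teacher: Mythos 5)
Your proposal is correct and takes essentially the same route as the paper, which after Proposition $\ref{FS 8.4}$ simply invokes the argument of \cite[Prop.~8.5]{FS}; your write-up unpacks exactly that argument (the same $I_\bullet$-parametrized construction with the maps $p \circ \Phi_n \circ (f_j \times 1 \times 1)$ appears explicitly in the paper's Proposition $\ref{no new supports}$). One minor point: a pseudo-endomorphism is only required to commute with cofaces, not codegeneracies, but since the Segal realization $|| \cdot ||$ uses only the face maps, this does not affect your argument.
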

\begin{proof} Having established Proposition $\ref{FS 8.4}$, the proof of \cite[Prop.~8.5]{FS} applies. \end{proof}

\begin{theorem}\label{FS 8.6} Suppose $S$ is a support condition on $\Delta^\bullet \times \bb{A}^d \times \bb{A}^q$ of the form $S^{(q), e, f}(\bb{A}^d)$.  Then the canonical map $E^Q(\bb{A}^d ) \to E^S( \bb{A}^d )$ is a weak equivalence.  In particular $\alpha_{\bb{A}^d} : E^Q(\bb{A}^d) \to E^{(q)}(\bb{A}^d)$ is a weak equivalence.  \end{theorem}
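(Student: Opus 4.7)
The plan is to show the canonical map $E^Q(\bb{A}^d) \to E^S(\bb{A}^d)$ induces isomorphisms on every homotopy group by a standard compactness argument, using Corollary $\ref{FS 8.3}$ to push compact subsets of $||E^S(\bb{A}^d)||$ into $||E^Q(\bb{A}^d)||$ via a pseudo-endomorphism $\phi^*$, and using Proposition $\ref{FS 8.5}$ to identify $\phi^*$ with the canonical inclusion on $\pi_*$. The ``in particular'' statement is then immediate by specializing to $S = S^{(q)} = S^{(q),q,0}$, which has the required form.

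For surjectivity on $\pi_n$, I would start with a class $\alpha \in \pi_n(||E^S(\bb{A}^d)||)$, represent it by a compact subset $K \subset ||E^S(\bb{A}^d)||$, and invoke Corollary $\ref{FS 8.3}$ to obtain a pseudo-endomorphism $\phi_\bullet$ with $K \subset ||E^{\leftsub{\phi}{S}}(\bb{A}^d)||$ and $\phi^*(K) \subset ||E^Q(\bb{A}^d)||$. Proposition $\ref{FS 8.5}$, applied to $S$, says that the two composites $K \hookrightarrow ||E^{\leftsub{\phi}{S}}|| \hookrightarrow ||E^S||$ and $K \hookrightarrow ||E^{\leftsub{\phi}{S}}|| \xrightarrow{\phi^*} ||E^S||$ are weakly homotopic, so in $\pi_n(||E^S||)$ one has $\alpha = [\phi^*(K) \hookrightarrow ||E^S||]$. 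Since $\phi^*(K) \subset ||E^Q||$, this class is pulled back from $\pi_n(||E^Q||)$, and $\alpha$ is in the image as required.

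For injectivity on $\pi_n$, take $\beta \in \pi_n(||E^Q(\bb{A}^d)||)$ with vanishing image in $\pi_n(||E^S||)$, represented by a compact $K_0 \subset ||E^Q||$. Choose a compact $K_1 \subset ||E^S||$ containing $K_0$ together with a null-homotopy of $K_0$ in $||E^S||$, and apply Corollary $\ref{FS 8.3}$ to $K_1$ to obtain $\phi_\bullet$ with $K_1 \subset ||E^{\leftsub{\phi}{S}}||$ and $\phi^*(K_1) \subset ||E^Q||$. Composing the null-homotopy with $\phi^*$ produces a null-homotopy of $\phi^*(K_0)$ inside $\phi^*(K_1) \subset ||E^Q||$. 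Finally, I observe that $K_0 \subset ||E^{\leftsub{\phi}{Q}}||$ (since $K_0 \subset ||E^Q||$ and $\phi^*(K_0) \subset ||E^Q||$) and apply Proposition $\ref{FS 8.5}$ to the support condition $Q = S^{(q),0,0}$, which is of the permitted form. This yields a weak homotopy in $||E^Q||$ between $K_0$ and $\phi^*(K_0)$, so $\beta = [K_0] = [\phi^*(K_0)] = 0$ in $\pi_n(||E^Q||)$.

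The main obstacle here is bookkeeping rather than geometry: one must track carefully which support conditions each of $K_0$, $K_1$, the null-homotopy, and their $\phi^*$-images belongs to, and explicitly verify that Proposition $\ref{FS 8.5}$ does apply to $Q$ (by identifying $Q$ with $S^{(q),0,0}$). All the substantive moving work has already been packaged into Propositions $\ref{FS 8.2}$, $\ref{FS 8.4}$, and $\ref{FS 8.5}$, so the only real input needed here is the compactness of the classes in $\pi_n$ of a geometric realization together with these packaged inputs.
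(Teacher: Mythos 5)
Your compactness argument (push a compact representative into $||E^{\leftsub{\phi}{S}}(\bb{A}^d)||$ via Corollary $\ref{FS 8.3}$, transport it into $||E^Q(\bb{A}^d)||$ by $\phi^*$, and use Proposition $\ref{FS 8.5}$ to identify $\phi^*$ with the inclusion on homotopy) is exactly the first half of the paper's proof, and your surjectivity/injectivity bookkeeping, including applying Proposition $\ref{FS 8.5}$ to $Q = S^{(q),0,0}$, is sound. But there is a genuine gap: everything you prove takes place in the Segal (fat) realizations $||E^Q(\bb{A}^d)||$ and $||E^S(\bb{A}^d)||$, whereas the theorem is about the spectra $E^Q(\bb{A}^d) \to E^S(\bb{A}^d)$, which are defined as the \emph{usual} total spectra $|n \mapsto E^{S_n}(\Delta^n \times \bb{A}^d \times \bb{A}^q)|$. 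This distinction is not cosmetic here: $\phi_\bullet$ is only a pseudo-endomorphism, compatible with the coface maps but not with the codegeneracies, so $\phi^*$ exists only on the fat realization; that is precisely why Propositions $\ref{FS 8.4}$--$\ref{FS 8.5}$ and Corollary $\ref{FS 8.3}$ are stated for $||\cdot||$. Your argument therefore establishes that $||E^Q(\bb{A}^d)|| \to ||E^S(\bb{A}^d)||$ is a weak equivalence, and you still need to know that the canonical maps from the Segal realizations to the usual realizations are weak equivalences. The paper supplies this missing step: each codegeneracy $\Delta^n \to \Delta^{n-i}$ is a split linear projection, so the induced maps $|E_i^S(\Delta^{n-i} \times \bb{A}^d \times \bb{A}^q)| \to |E_i^S(\Delta^n \times \bb{A}^d \times \bb{A}^q)|$ are realizations of monomorphisms of simplicial sets, hence closed cofibrations; the simplicial spaces are therefore good in the sense of Segal, and Segal's theorem (\cite[App.~A]{SegalCats}) identifies the fat and usual realizations up to weak equivalence. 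Without some version of this comparison your conclusion does not reach the stated map $\alpha_{\bb{A}^d}$.

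A smaller point: your justification that $K_0 \subset ||E^{\leftsub{\phi}{Q}}(\bb{A}^d)||$ ``since $K_0 \subset ||E^Q||$ and $\phi^*(K_0) \subset ||E^Q||$'' is not quite a proof; membership in $\leftsub{\phi}{Q}$ is a condition on supports, not on images of compacta. It is repaired by unwinding Corollary $\ref{FS 8.3}$: the finite family of supports carrying $K_1$ (hence those carrying $K_0$, which lie in $Q$) is moved by the $\phi_\bullet$ of Proposition $\ref{FS 8.2}$ so that $\phi_n^{-1}$ of each member is quasi-finite, whence each support of $K_0$ lies in $\leftsub{\phi}{Q}$. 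This is minor, but it should be said at the level of supports.
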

\begin{proof} Corollary $\ref{FS 8.3}$ and Proposition $\ref{FS 8.5}$ imply $|| E^Q(\bb{A}^d ) || \to || E^S (\bb{A}^d)||$ is a weak equivalence.  Every codegeneracy morphism $\Delta^n \to \Delta^{n-i}$ is a linear projection with a section.  Therefore, for every term $E_i$ of $E = (E_0, E_1, \cdots )$, the morphism $|E_i^S(\Delta^{n-i} \times \bb{A}^d \times \bb{A}^q)| \to |E_i^S(\Delta^n \times \bb{A}^d \times \bb{A}^q)|$, being the geometric realization of a monomorphism of simplicial sets, is a closed cofibration.  Thus each simplicial space $|(n \mapsto |E_i(\Delta^n \times \bb{A}^d \times \bb{A}^q)|)|$ is good in the sense of \cite[Defn.~A.4]{SegalCats}.  Therefore we may apply Segal's theorem \cite[App.~A]{SegalCats}, which says the canonical map from the Segal realization of a good simplicial space to its usual realization is a weak equivalence.  \end{proof}

\subsection{Controlled moving on $\Delta^\bullet \times \bb{A}^d \times \bb{A}^q$} \label{controlled moving}

Suppose $Z \subset S_{\leq N}^{(q)}(\bb{A}^d)$ is a finite subfamily of supports.  We have shown the canonical map $E^Z(\bb{A}^d) \to E^{(q)}(\bb{A}^d)$ fits into a homotopy commutative diagram (in which the unlabeled arrows are canonical):
\begin{equation} \label{supports diagram} \xym{E^Z(\bb{A}^d) \ar[r] \ar[d]^{\phi^*} & E^{(q)}(\bb{A}^d) \\
E^{\phi^{-1}(Z)}(\bb{A}^d) \ar[r] & E^Q(\bb{A}^d) \ar[u] \\ } \end{equation}

The homotopy commutativity means $\phi^*$ is homotopic to the inclusion, and the image of $\phi^*$ factors through the smaller space $E^Q(\bb{A}^d) \subset E^{(q)}(\bb{A}^d)$.  Since the spaces on the left hand side involve finitely many supports and the interval is compact, the spaces on the right hand side may be replaced by spectra involving only finitely many supports.  We will need a rather precise description of this finite collection.  So for $Z \subset S_n^{(q)}(\bb{A}^d)$, we denote by ${F^*Z} \subset S_n^{(q)}(\bb{A}^d)$ the support condition generated by the subschemes ${(\Phi_n \circ (f_j \times 1 \times 1))}^{-1} (p^{-1}(Z)) \into \Delta^n \times \bb{A}^d \times \bb{A}^q$ for $j \in I_n$.  This support condition is the subject of Proposition $\ref{FS 8.4}$.

Informally, the ``worst" support in $F^*Z$, measured by the failure of quasi-finiteness, is $Z$ itself.  Then we have supports that share a facet $\Delta^{n-1} \into \Delta^n$ with $Z$, but are quasi-finite over $(\Delta^n \setminus \Delta^{n-1}) \times \bb{A}^d$.  Then we have supports that coincide with $Z$ along a codimension 2 face of $\Delta^n$, but are otherwise quasi-finite; and so on, until we reach $\phi_n^{-1}(Z)$, which is quasi-finite over all of $\Delta^n \times \bb{A}^d$.  We will use $R^*Z$ to denote the support condition where we use all of the $f_j$'s except $f_{1_n}$.  So we have $R^*Z = \oline{F^*Z \setminus Z}$ and $F^*Z = Z \cup R^*Z$.  

\begin{remark} \label{move preserves tower} If $Z \subset S^{(q),e,f}_n(\bb{A}^d)$ and $\phi_\bullet$ is a pseudo-endomorphism such that $\phi_n^{-1}(Z)$ is quasi-finite over $\Delta^n \times \bb{A}^d$, then there exists a homotopy $\Phi_\bullet$ between $\phi_\bullet$ and the identity such that $F^* Z \subset S^{(q),e,f}_n(\bb{A}^d)$.  This is a restatement of Proposition $\ref{FS 8.4}$.  \end{remark}

Proposition $\ref{no new supports}$ below implies that, in the diagram $\ref{supports diagram}$, $E^{(q)}(\bb{A}^d)$ may be replaced by the spectrum $E^{F^*Z}(\bb{A}^d)$.  In this sense no ``new" supports in $S^{(q)}(\bb{A}^d) \setminus Q(\bb{A}^d)$ are encountered in the move; this is implicit in the proof of Proposition $\ref{FS 8.4}$.  To state the precise result, we use the following notation.  If $Z, W \into \Delta^\bullet \times X \times \bb{A}^q$ are closed subschemes, then we denote by $E^{Z \setminus Z \cap W}(X \setminus W)$ the theory with supports on $Z \setminus Z \cap W$, evaluated on $\Delta^\bullet \times X \times \bb{A}^q \setminus W$.

\begin{proposition} \label{no new supports} Let $Z \subset S_{\leq N}^{(q)}(\bb{A}^d)$ be a finite subfamily of supports.  Then the canonical map $E^{Z}(\bb{A}^d) \to E^{F^*Z}(\bb{A}^d)$ is homotopic to the composition $E^{Z}(\bb{A}^d) \xrightarrow{\phi^*} E^{\phi^{-1}(Z)}(\bb{A}^d) \to E^{F^*Z}(\bb{A}^d)$.

In particular, the composition
$$E^{Z}(\bb{A}^d) \to E^{F^*Z}(\bb{A}^d) \to E^{F^*Z \setminus \phi^{-1}(Z) }(\bb{A}^d \setminus \phi^{-1}(Z) ) $$
is nullhomotopic.

\end{proposition}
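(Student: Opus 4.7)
The plan is to leverage the homotopy $\Phi_\bullet$ produced by Proposition \ref{FS 8.4} to build an explicit simplicial homotopy parameterized by $I_\bullet$ (Notation \ref{simplicial interval}). First I would apply Proposition \ref{FS 8.4} to the finite subfamily $Z$, obtaining a pseudo-endomorphism $\Phi_\bullet$ of $\Delta^\bullet \times \A \times \bb{A}^d \times \bb{A}^q$ between $\phi_\bullet$ and the identity, so that $\Phi_\bullet \circ i_0 = i_0 \circ \phi_\bullet$ and $\Phi_\bullet \circ i_1 = i_1$, and so that the preimages ${(\Phi_n \circ (f_j \times 1 \times 1))}^{-1}(p^{-1}(Z^n))$ all lie in $F^*Z$ by the very definition of this support condition. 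For each $n$ and each $j \in I_n$, setting $g^n_j := \Phi_n \circ (f_j \times 1 \times 1) \colon \Delta^n \times \bb{A}^d \times \bb{A}^q \to \Delta^n \times \A \times \bb{A}^d \times \bb{A}^q$, the composition of pullbacks would induce a map of spectra
\[ \psi^n_j \colon E^{Z^n}(\Delta^n \times \bb{A}^d \times \bb{A}^q) \xrightarrow{p^*} E^{p^{-1}(Z^n)}(\Delta^n \times \A \times \bb{A}^d \times \bb{A}^q) \xrightarrow{(g^n_j)^*} E^{F^*Z^n}(\Delta^n \times \bb{A}^d \times \bb{A}^q). \]
Because $\Phi_\bullet$ is a pseudo-endomorphism and $j \mapsto f_j$ is simplicial, the collection $\{\psi^n_j\}$ should assemble into a morphism of bisimplicial spectra whose realization over $I_\bullet$ provides a homotopy between the maps obtained at $j = 1_\bullet$ and $j = 0_\bullet$.

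Next I would verify the endpoints. At $j = 1_n$ the relation $\Phi_\bullet \circ i_1 = i_1$ gives $g^n_{1_n} = i_1$, so $p \circ g^n_{1_n} = \id$ and $\psi^n_{1_n}$ coincides with the canonical inclusion $E^{Z^n} \to E^{F^*Z^n}$. At $j = 0_n$ the relation $\Phi_\bullet \circ i_0 = i_0 \circ \phi_\bullet$ gives $g^n_{0_n} = i_0 \circ \phi_n$, so $p \circ g^n_{0_n} = \phi_n$ and $\psi^n_{0_n}$ coincides with $E^{Z^n} \xrightarrow{\phi_n^*} E^{\phi_n^{-1}(Z^n)} \into E^{F^*Z^n}$. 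Totalizing over $n$ would then exhibit the sought homotopy between the canonical map $E^Z(\bb{A}^d) \to E^{F^*Z}(\bb{A}^d)$ and the composite $E^Z(\bb{A}^d) \xrightarrow{\phi^*} E^{\phi^{-1}(Z)}(\bb{A}^d) \to E^{F^*Z}(\bb{A}^d)$, which is the first assertion.

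For the ``in particular'' statement, I would invoke the localization fiber sequence recorded in Remarks \ref{coh remarks}:
\[ E^{\phi^{-1}(Z)}(\bb{A}^d) \to E^{F^*Z}(\bb{A}^d) \to E^{F^*Z \setminus \phi^{-1}(Z)}(\bb{A}^d \setminus \phi^{-1}(Z)). \]
By the first assertion, the canonical map $E^Z(\bb{A}^d) \to E^{F^*Z}(\bb{A}^d)$ factors up to homotopy through $E^{\phi^{-1}(Z)}(\bb{A}^d)$, so its further composition with the rightmost map is nullhomotopic, as the composite of two consecutive maps in a fiber sequence. The main point requiring care will be the coherence of $(n, j) \mapsto \psi^n_j$ as a bisimplicial spectrum; this should be essentially formal from the pseudo-endomorphism condition on $\Phi_\bullet$ and the simplicial naturality of $j \mapsto f_j$, but it requires attention to face and degeneracy operators in both variables.
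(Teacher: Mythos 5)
Your proposal is correct and follows essentially the same route as the paper: the paper also builds the homotopy by pulling back along $p$, then along $\Phi_\bullet$, and then evaluating at $f_j$ for $j \in I_\bullet$ (its map $\beta = \gamma \circ (1\times\Phi^*)\circ(1\times p^*)$ is exactly your $\psi^n_j = (g^n_j)^*\circ p^*$ assembled over $I_\bullet$, with the same endpoint identifications $f_{0_n}=i_0$, $f_{1_n}=i_1$), and the nullhomotopy likewise follows from the localization fibration sequence. The coherence issue you flag is handled in the paper by working with Segal realizations, just as in Proposition \ref{FS 8.5}.
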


\begin{proof} The map $I_n \times |E^{\Phi_n^{-1}(p^{-1}(Z))}(\Delta^n \times \bb{A}^d \times \A \times \bb{A}^q)| \to |E^{F^*Z}(\Delta^n \times \bb{A}^d \times \bb{A}^q)|$ given by $(j,s) \mapsto f_j^*s$ induces a map
$$\gamma : ||I_\bullet \times (n \mapsto |E^{\Phi_n^{-1}(p^{-1}(Z))}(\Delta^n \times \bb{A}^d \times \A \times \bb{A}^q)| ) || \to ||E^{F^*Z}(\bb{A}^d)||.$$  

We have morphisms of spectra
$$I_\bullet \times E^Z(\bb{A}^d) \xrightarrow{1 \times p^*} I_\bullet \times E^{p^{-1}(Z)}(\bb{A}^d \times \A) \xrightarrow{1 \times \Phi^*} I_\bullet \times E^{\Phi^{-1}(p^{-1}(Z))}(\bb{A}^d \times \A).$$

Let $\beta: I_\bullet \times E^Z(\bb{A}^d) \to E^{F^*Z}(\bb{A}^d)$ denote the composition $\gamma \circ (1 \times \Phi^*) \circ (1 \times p^*)$; we suppress the identification of the Segal realization with the usual geometric realization.  Since $f_{0_n} = i_0$ and $p \circ \Phi_\bullet \circ i_0 = p \circ i_0 \circ \phi_\bullet = \phi_\bullet$, the composition $0_\bullet \times E^Z(\bb{A}^d) \into I_\bullet \times E^Z(\bb{A}^d) \xrightarrow{\beta} E^{F^*Z}(\bb{A}^d)$ is the map $\phi^*$ followed by the inclusion $E^{\phi^{-1}Z}(\bb{A}^d) \to E^{F^*Z}(\bb{A}^d)$.  Since $p \circ \Phi_\bullet \circ i_1 = p \circ i_1 = \id$, the composition $1_\bullet \times E^Z(\bb{A}^d) \into I_\bullet \times E^Z(\bb{A}^d) \xrightarrow{\beta} E^{F^*Z}(\bb{A}^d)$ is the inclusion.  Since the inclusions $0_\bullet \times E^Z(\bb{A}^d), 1_\bullet \times E^Z(\bb{A}^d) \into I_\bullet \times E^Z(\bb{A}^d)$ are homotopic, the result follows. \end{proof}

\begin{corollary} \label{nullhomotopy pairs} Let $Z' \into Z \subset S_{\leq N}^{(q)}(\bb{A}^d)$ be finite subfamilies of supports.  Then the composition
$$E^{Z \setminus Z' }(\bb{A}^d \setminus Z') \to E^{F^*Z \setminus F^*Z' }(\bb{A}^d \setminus F^*Z'  ) \to E^{F^*Z \setminus F^* Z' \cup \phi^{-1}(Z) }(\bb{A}^d \setminus F^* Z' \cup \phi^{-1}(Z) ) $$
is nullhomotopic.

\end{corollary}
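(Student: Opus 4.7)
The plan is to imitate the proof of Proposition \ref{no new supports} for the pair $Z' \into Z$, producing a homotopy $\beta$ between the first map in the stated composition and a map that becomes nullhomotopic after post-composition with the second. Specifically, I construct $\beta$ on $I_\bullet \times E^{Z \setminus Z'}(\bb{A}^d \setminus Z')$ valued in $E^{F^*Z \setminus F^*Z'}(\bb{A}^d \setminus F^*Z')$, with endpoints at $0_\bullet$ and $1_\bullet$ given by $\phi^*$ composed with the canonical inclusion, and by the canonical inclusion itself. The map in the statement is the $1_\bullet$ endpoint post-composed with the map to $E^{F^*Z \setminus F^*Z' \cup \phi^{-1}(Z) }(\bb{A}^d \setminus F^* Z' \cup \phi^{-1}(Z))$. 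Since $\phi^{-1}(Z)$ is excised in the target, this post-composition annihilates the $0_\bullet$ endpoint; the homotopy between $0_\bullet$ and $1_\bullet$ inside $I_\bullet$ then yields the desired nullhomotopy.

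Concretely, $\beta$ is the relative version of the composition $\gamma \circ (1 \times \Phi^*) \circ (1 \times p^*)$ from the proof of Proposition \ref{no new supports}. Flatness of $p \colon \Delta^\bullet \times \A \times \bb{A}^d \times \bb{A}^q \to \Delta^\bullet \times \bb{A}^d \times \bb{A}^q$ gives a pullback $p^* \colon E^{Z \setminus Z'}(\bb{A}^d \setminus Z') \to E^{p^{-1}(Z) \setminus p^{-1}(Z')}(\bb{A}^d \times \A \setminus p^{-1}(Z'))$; the pseudo-endomorphism $\Phi_\bullet$ supplies a further pullback to the theory with supports $\Phi^{-1}p^{-1}(Z) \setminus \Phi^{-1}p^{-1}(Z')$ on $\bb{A}^d \times \A \setminus \Phi^{-1}p^{-1}(Z')$; and the maps induced by $f_j$ for $j \in I_n$ assemble, as in Proposition \ref{no new supports}, into the Segal-realization map $\gamma$ landing in $E^{F^*Z \setminus F^*Z'}(\bb{A}^d \setminus F^*Z')$. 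The endpoint identifications are formal from $p \circ \Phi_\bullet \circ i_0 = \phi_\bullet$ and $p \circ \Phi_\bullet \circ i_1 = \id$, just as there.

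The main obstacle is purely bookkeeping: one must check that each intermediate map lands in the theory on the correct complement. This reduces to the containments $\phi^{-1}(W) \subseteq F^*W$ and $f_j^* \Phi_n^{-1} p^{-1}(W) \subseteq F^*W$ for $W \in \{Z, Z'\}$, which follow directly from the definition of $F^*$ together with the identity $p \circ \Phi_\bullet \circ i_0 = \phi_\bullet$. No additional geometric input beyond Proposition \ref{FS 8.4} is required; in particular, the entire corollary is formal from the construction already used to prove Proposition \ref{no new supports}.
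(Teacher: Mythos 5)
Your proposal is correct and is essentially the paper's argument: the paper simply notes that the nullhomotopy of Proposition \ref{no new supports} holds for $Z'$ and $Z$ separately and hence passes to the cofiber $E^{Z\setminus Z'}(\bb{A}^d\setminus Z')$, which is exactly the relative homotopy $\beta$ you construct by hand (the construction $\gamma\circ(1\times\Phi^*)\circ(1\times p^*)$ is natural in the support, so applying it to the pair $Z'\into Z$ gives your relative version). Your explicit bookkeeping of the containments $f_j^{-1}\Phi_n^{-1}p^{-1}(W)\subseteq F^*W$ just unpacks what the paper leaves implicit.
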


\begin{proof} The nullhomotopy established in Proposition $\ref{no new supports}$ holds for $Z'$ and $Z$ separately, hence it holds for the cofiber.  \end{proof}

\begin{corollary} Let $S_1, S_2$ be successive support conditions in the tower $\ref{support tower}$.  Then the canonical morphism $E^{S_1}(\bb{A}^d) \to E^{S_2}(\bb{A}^d)$ is a weak equivalence. \end{corollary}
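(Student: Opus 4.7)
The plan is to deduce this corollary immediately from Theorem $\ref{FS 8.6}$ by a two-out-of-three argument. Theorem $\ref{FS 8.6}$ asserts that for any support condition $S$ of the form $S^{(q),e,f}(\bb{A}^d)$, the canonical morphism $E^Q(\bb{A}^d) \to E^S(\bb{A}^d)$ is a weak equivalence. The support conditions $S_1$ and $S_2$ that appear as successive entries in the tower (\ref{support tower}) are both of this form (with common $e$ and adjacent values of $f$), so we have canonical weak equivalences $E^Q(\bb{A}^d) \xrightarrow{\sim} E^{S_1}(\bb{A}^d)$ and $E^Q(\bb{A}^d) \xrightarrow{\sim} E^{S_2}(\bb{A}^d)$.

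Next I would observe that the chain of inclusions $Q(\bb{A}^d) \subseteq S_1(\bb{A}^d) \subseteq S_2(\bb{A}^d)$ yields a commutative diagram of canonical morphisms
$$E^Q(\bb{A}^d) \to E^{S_1}(\bb{A}^d) \to E^{S_2}(\bb{A}^d),$$
in which both the first arrow and the composite are weak equivalences by the preceding paragraph. The two-out-of-three property for weak equivalences of spectra then forces the second arrow $E^{S_1}(\bb{A}^d) \to E^{S_2}(\bb{A}^d)$ to be a weak equivalence, which is the desired conclusion.

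There is no real obstacle here, since all of the geometric input (the use of Suslin's moving lemma to build the pseudo-endomorphism $\phi_\bullet$ of Proposition $\ref{FS 8.2}$, the homotopy constructed in Proposition $\ref{FS 8.4}$, Proposition $\ref{FS 8.5}$ showing $\phi^*$ is homotopic to the inclusion, and Segal's realization theorem) has been absorbed into Theorem $\ref{FS 8.6}$. The corollary is essentially a repackaging: it isolates the single step of the tower that will be needed as the basic building block for the inductive argument comparing $E^Q(X)_{\leq N}$ to $E^{(q)}(X)_{\leq N}$ in the next subsection, where these elementary weak equivalences on $\bb{A}^d$ will be combined with a finite, generically \'etale morphism $f : X \to \bb{A}^d$ and the Nisnevich excision hypothesis to handle general smooth affine $X$.
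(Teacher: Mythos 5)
Your proof is correct, but it takes a genuinely different route from the paper's. You deduce the corollary from Theorem \ref{FS 8.6} applied to both $S_1$ and $S_2$ together with two-out-of-three for the composable inclusions $Q(\bb{A}^d) \subseteq S_1(\bb{A}^d) \subseteq S_2(\bb{A}^d)$; since every term of the tower \ref{support tower} is of the form $S^{(q),e,f}(\bb{A}^d)$ and the inclusion-of-supports maps compose functorially, this is a complete and shorter argument. The paper instead identifies the cofiber of $E^{S_1}(\bb{A}^d) \to E^{S_2}(\bb{A}^d)$ with a colimit of relative terms $E^{Z_2 \setminus Z_1 \cap Z_2}(\bb{A}^d \setminus Z_1)$ and uses Corollary \ref{nullhomotopy pairs} together with Remark \ref{move preserves tower} to show each such term maps nullhomotopically to a further term, so the cofiber is contractible. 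The trade-off is worth noting: your argument is more economical on $\bb{A}^d$, but the paper's cofiber-by-cofiber nullhomotopy is the version of the statement that actually transfers to a general smooth affine (and projective) $X$ via a finite morphism $f : X \to \bb{A}^d$ and excision, where no global analogue of Theorem \ref{FS 8.6} is available. Relatedly, your closing remark slightly misplaces the role of this corollary: the building block invoked in Proposition \ref{affine tower collapse} is really Proposition \ref{no new supports} and Corollary \ref{nullhomotopy pairs} (the controlled, finite-support nullhomotopies), not the corollary itself, which is better viewed as a consistency check that the controlled moving machinery collapses the tower on $\bb{A}^d$.
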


\begin{proof} The cofiber of the map $E^{S_1}(\bb{A}^d) \to E^{S_2}(\bb{A}^d)$ is the colimit of the cofibers $E^{Z_2 \setminus Z_1 \cap Z_2}(\bb{A}^d \setminus Z_1)$ as $Z_i$ varies over the support condition $S_i(\bb{A}^d)$.  Corollary $\ref{nullhomotopy pairs}$ with $Z' = Z_1 \into Z_1 \cup Z_2 = Z$ and Remark $\ref{move preserves tower}$ show that any term in the colimit admits a nullhomotopic map to a further term in the colimit.  Thus the cofiber is contractible. \end{proof}

\subsection{Extension to smooth affine varieties} \label{general affine}

Since our geometric constructions always involve $X$, those subschemes that are independent of $X$ (i.e., pulled back from $\Delta^n \times \bb{A}^q$) will not be affected by our constructions.  Hence we need a different argument to exclude the possibility that they contribute to a discrepancy between $E^Q(X)$ and $E^{(q)}(X)$.  We say a codimension $q$ subscheme $Z \into \Delta^n \times X \times \bb{A}^q$ is \textit{induced} if there exists a codimension $q$ subset $Y \into \Delta^n \times \bb{A}^q$ such that $Z \subset p^{-1} (Y)$, where $p:= pr_{13} : \Delta^n \times X \times \bb{A}^q \to \Delta^n \times \bb{A}^q$ denotes the projection.  (See \cite[Defn.~5.3.1]{MLChow}.)  In the colimit such an induced subscheme contributes through the subscheme $p^{-1}(Y)$.  

Let $p^{-1}(S)(X)$ denote the support condition generated by subschemes of the form $p^{-1}(Z)$, where $Z \in S(\spec k)$.  The support condition $p^{-1}(S)(X) \subset S(X)$ consists of the induced supports.

\begin{lemma} \label{induced supports} Let $S_1, S_2$ be successive support conditions in the tower $\ref{support tower}$, and suppose $X$ is a smooth $k$-scheme.  Then the canonical map $E^{p^{-1}(S_1)}(X) \to E^{p^{-1}(S_2)}(X)$ is a weak equivalence. \end{lemma}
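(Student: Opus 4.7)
The plan is to reduce to the case $X = \spec k$, which is covered by the last corollary of Section $\ref{controlled moving}$ (applied with $d = 0$). The key observation is that induced support theories on $X$ can be repackaged as ordinary support theories on $\spec k$ for a modified presheaf of spectra.

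Specifically, define $F \in {\bf{Spt}}(k)$ by $F(Y) := E(Y \times X)$ for $Y \in {\bf{Sm}}/k$. Since $Y \times X \in {\bf{Sm}}/k$ whenever both factors are smooth, and $(Y_1 \coprod Y_2) \times X = (Y_1 \times X) \coprod (Y_2 \times X)$, the functor $F$ is a well-defined additive presheaf of spectra on ${\bf{Sm}}/k$. For any closed $Z \into \Delta^n \times \bb{A}^q$, unwinding definitions gives
$$F^Z(\Delta^n \times \bb{A}^q) = \hofib\bigl(E(\Delta^n \times X \times \bb{A}^q) \to E(\Delta^n \times X \times \bb{A}^q \setminus p^{-1}(Z))\bigr) = E^{p^{-1}(Z)}(\Delta^n \times X \times \bb{A}^q),$$
and this identification is natural in $Z$ (for inclusions) and in $n$ (for coface/codegeneracy). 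Passing to hocolims and total spectra yields, for any support condition $S$ on $\spec k$, a natural weak equivalence $F^S(\spec k) \simeq E^{p^{-1}(S)}(X)$ compatible with inclusions $S_1 \subseteq S_2$.

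Applying the last corollary of Section $\ref{controlled moving}$ to the presheaf $F$ with $d = 0$, the canonical map $F^{S_1}(\spec k) \to F^{S_2}(\spec k)$ is a weak equivalence. Via the identification above, this is precisely the desired map $E^{p^{-1}(S_1)}(X) \to E^{p^{-1}(S_2)}(X)$. There is no serious obstacle here, as that corollary applies to arbitrary $E \in {\bf{Spt}}(k)$ without any excision hypothesis, so passing from $E$ to $F$ is free; the only point to verify is the naturality of the identification of $F^S(\spec k)$ with $E^{p^{-1}(S)}(X)$ in both the simplicial variable and the support condition, which is routine.
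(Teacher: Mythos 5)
Your argument is correct, and it rests on exactly the same geometric input as the paper's proof --- the $d=0$ case of the moving constructions of Section \ref{controlled moving} --- but it packages the reduction differently. The paper re-runs the construction of the nullhomotopy from Proposition \ref{no new supports} with $\bb{A}^d$ replaced by $X$ and each support $Y$ replaced by $p^{-1}(Y)$, using that the pseudo-endomorphism $\phi_\bullet$ and the homotopy $\Phi_\bullet$ act only on the $\Delta^\bullet \times \bb{A}^q$ factor and that $Z \mapsto F^*Z$ takes induced supports to induced supports. You instead make the reduction formal by introducing the auxiliary presheaf $F(Y) := E(Y \times X)$ and observing that the lemma is literally the $d=0$ case of the unlabeled corollary at the end of Section \ref{controlled moving} applied to $F$; since that corollary holds for an arbitrary presheaf of spectra with no excision hypothesis, nothing has to be re-verified. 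This is a cleaner way of expressing the same idea, and it buys you a black-box reduction in place of a ``the proof carries over'' argument. Two small points are worth making explicit: first, $F(\Delta^n \times \bb{A}^q) = E(\Delta^n \times \bb{A}^q \times X)$, so a harmless permutation of factors is needed to match $E(\Delta^n \times X \times \bb{A}^q)$; second, the identification $F^{S_n}(\spec k) \simeq E^{p^{-1}(S)_n}(X)$ uses that the subschemes $p^{-1}(Z)$ for $Z \in S_n(\spec k)$ are cofinal in the support condition $p^{-1}(S)_n(X)$ they generate, which holds because $p^{-1}(Z_1) \cup p^{-1}(Z_2) = p^{-1}(Z_1 \cup Z_2)$ and every generator of $p^{-1}(S)_n(X)$ is a closed subscheme of some $p^{-1}(Z)$. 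Both points are routine, as you say.
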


\begin{proof} This follows from the construction and use of the moving endomorphism $\phi_\bullet$ and the homotopy $\Phi_\bullet$ in Propositions $\ref{FS 8.4}$ and $\ref{no new supports}$: given a finite set $Z$ of codimension $q$ cycles on $\Delta^n \times \bb{A}^q$ intersecting all the faces properly (and possibly satisfying some further condition as specified by $\ref{support tower}$), we can find a pseudo-endomorphism moving every member of $Z$ to a cycle quasi-finite over $\Delta^n$, and a homotopy between the identity and this pseudo-endomorphism.  (This is the $d=0$ case of the previous section.)  Then the construction of the homotopy in the proof of Proposition $\ref{no new supports}$ can be carried out with $\bb{A}^d$ replaced by $X$ and any support $Y$ replaced by $p^{-1}(Y)$.  Finally we note that $Z \mapsto F^*Z$ takes induced supports to induced supports. \end{proof}

\begin{remark} \label{induced decomp} Any subfamily of supports $Z =\{ Z^n_i \} \subset S_n^{(q),e,f}(X)$ (here the $Z^n_i$ are integral $k$-schemes) admits a canonical decomposition $Z = \Ind(Z) \cup Z'$, where $\Ind(Z) \subset p^{-1}S_n^{(q),e,f}(X)$ is the union of the induced components, and $Z'$ consists of the remaining (noninduced) components $Z^n_i$.  Induced supports are preserved by pushforward and pullback by finite flat morphisms.  \end{remark}

\begin{proposition} \label{birationality lemma} Let $X$ be a smooth equidimensional $k$-scheme that is affine and of dimension $d$.  Let $\{Z_i^n \} \subset S_n^{(q), e, f}(X)$ be a finite subfamily of supports, and suppose none are induced.  Then there exists a finite morphism $f \colon X \to \bb{A}^d$ and supports $\{ {Z_i^n}' \} \subset S_n^{(q)}(X)$ satisfying:

\begin{enumerate}
\item the restriction of $1 \times f \times 1$ to $Z^n_i$ is birational onto its image;
\item $Z^n_i \cap {Z^n_i}'$ contains the exceptional locus $\Exc (1 \times f \times 1 |_{Z^n_i})$; and
\item ${Z^n_i}' \in S_n^{(q), e, f+1}(X)$, if $f \leq d+n-1$; or ${Z^n_i}' \in S_n^{(q), e-1, 0}(X)$, if $f = d+n$.
\end{enumerate}

\end{proposition}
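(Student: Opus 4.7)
The plan is as follows. Embed the smooth affine $X$ into $\bb{A}^N$ and consider the parameter space of linear projections $\pi : \bb{A}^N \to \bb{A}^d$. Noether normalization (valid for infinite $k$) shows that a generic $\pi$ restricts to a finite, generically \'etale morphism $f := \pi|_X : X \to \bb{A}^d$. I additionally impose the open condition that, for each $Z_i^n$ in the finite family, the restriction of $1 \times f \times 1$ to $Z_i^n$ be birational onto its image. Because each $Z_i^n$ is noninduced---equivalently, the projection $Z_i^n \to \Delta^n \times \bb{A}^q$ is not contained in a codimension $q$ subscheme---a generic projection separates points of $Z_i^n$ sharing a common fiber of $1 \times f \times 1$, giving condition~(1).

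Having chosen $f$, define
\[
Z_i^{n'} := \overline{(1 \times f \times 1)^{-1}\bigl((1 \times f \times 1)(Z_i^n)\bigr) \setminus Z_i^n}.
\]
Since $1 \times f \times 1$ is finite and flat over $\Delta^n \times \bb{A}^q$, the image $(1 \times f \times 1)(Z_i^n)$ has codimension $q$ and meets all faces of $\Delta^n \times \bb{A}^d \times \bb{A}^q$ properly, and the same holds for the preimage, placing $Z_i^{n'}$ in $S_n^{(q)}(X)$. For condition~(2), at any $z \in \Exc((1 \times f \times 1)|_{Z_i^n})$ either another sheet of the preimage passes through $z$---in which case $z \in Z_i^n \cap Z_i^{n'}$---or $z$ lies on the ramification divisor of $f$; a further generic choice of $f$ arranges that the ramification divisor meets $Z_i^n$ only inside $Z_i^{n'}$, yielding $\Exc \subset Z_i^n \cap Z_i^{n'}$.

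The heart of the argument is condition~(3). A point $(t, x) \in \Delta^n \times X$ lies in the $e$-dimensional fiber locus of $Z_i^{n'}$ exactly when some $x' \in f^{-1}(f(x)) \setminus \{x\}$ satisfies $(t, x') \in A$, where $A \subset \Delta^n \times X$ is the image of the $e$-dimensional fiber locus of $Z_i^n$. Hence this locus is the image, under $(t, x, x') \mapsto (t, x)$, of the intersection of $\Delta^n \times \bigl(X \times_{\bb{A}^d} X \setminus \Delta\bigr)$ with the pullback of $A$ along $(t, x, x') \mapsto (t, x')$. For a sufficiently generic $f$, the off-diagonal $X \times_{\bb{A}^d} X \setminus \Delta$ is in general position with respect to $A$ and with respect to each of its restrictions to faces of $\Delta^n$; the resulting dimension count yields the codimension improvement from $f$ to $f + 1$. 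This settles~(3) when $f \leq d + n - 1$; in the boundary case $f = d + n$ the locus $A$ is finite and a further generic choice reduces the corresponding fibers of $Z_i^{n'}$ to dimension at most $e - 1$, placing $Z_i^{n'}$ in $S_n^{(q), e-1, 0}(X)$.

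The principal obstacle is establishing the Bertini-style general-position statement of the previous paragraph uniformly: the required improvement must hold simultaneously for every face of $\Delta^n$ and every member of the finite family. This parallels the dimension counts in \cite[Sec.~7]{MLChow} and requires a sufficiently rich parameter space of projections, which is why the embedding $X \hookrightarrow \bb{A}^N$ is taken with $N$ large.
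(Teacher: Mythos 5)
Your construction diverges from the paper's at the crucial point, and the divergence is fatal to condition~(3). You take ${Z_i^n}'$ to be the residual cycle $\overline{(1\times f\times 1)^{-1}((1\times f\times 1)(Z_i^n))\setminus Z_i^n}$ --- the cycle called $Z^+$ later in the paper --- and the paper explicitly warns (Step~4 of the proof of Proposition~\ref{affine tower collapse}) that ``it does not seem possible to guarantee that $Z^+$ belongs to $S_1(X)$''; a separate excision argument is needed there precisely because $Z^+$ escapes the smaller support condition. Your dimension count does not rescue this: the off-diagonal part of $X\times_{\bb{A}^d}X$ is finite and surjective over $X$ via \emph{both} projections, so the locus $\{(t,x): \exists\, x'\neq x,\ f(x')=f(x),\ (t,x')\in A\}$ is the image of $A$ under a finite correspondence and has codimension exactly $f$, the same as $A$ --- not $f+1$. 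There is no transversality to exploit; genericity of $f$ cannot shrink the image of a set under a finite surjective correspondence. So the $e$-dimensional fiber locus of your ${Z_i^n}'$ has the same codimension as that of $Z_i^n$, and (3) fails.

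The paper's ${Z_i^n}'$ is a different object: it is (a dominant extension of) the exceptional locus $\Exc(f|_{Z_i^n})$, an ample divisor sitting \emph{inside} $Z_i^n$. For generic $f$ this divisor meets the $e$-fiber locus $(Z^m)_e$ properly on every face, so the generic fiber of $(Z')^m\cap (Z^m)_e$ over $pr_{12}((Z^m)_e)$ has dimension at most $e-1$; hence the points of $Z'$ still carrying $e$-dimensional fibers map into a nowhere-dense subset of $pr_{12}((Z^m)_e)$, which is exactly the codimension gain (or, in the boundary case $f=d+n$, the elimination of $e$-dimensional fibers altogether). One then extends $f(\Exc(f|_Z))$ to a dominant support quasi-finite away from the divisor it lies over, via Suslin's moving lemma (cf.\ Lemma~\ref{extend from divisor}), and pulls back along $f$. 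A secondary gap in your write-up: your verification of (2) at ramification points of $f|_{Z_i^n}$ where no second sheet of the preimage passes through is asserted (``a further generic choice of $f$ arranges\dots'') but not proved, and it is unclear it can be arranged for your choice of ${Z_i^n}'$; with the paper's choice the issue disappears because $\Exc(f|_{Z_i^n})\subset Z_i^n\cap {Z_i^n}'$ by construction.
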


\begin{proof} The birationality follows from \cite[Lemma 5.3.3]{MLChow} with $\Delta^p$ replaced by $\Delta^n \times \bb{A}^q$, where it is necessary to assume $k$ is infinite.  For simplicity of notation we assume $Z = Z^n_i$ is a single subvariety and we write $Z' = {Z^n_i}'$.  We write also $f = 1 \times f \times 1$.

As a first approximation, we show the exceptional locus $\Exc (f |_Z)$ satisfies the properties demanded of $Z'$, except that it is not dominant over $\Delta^n \times X$.  Then we show $\Exc (f |_Z)$ extends to a dominant support $Z'$ which is quasi-finite over $\Delta^n \times X$ except where $\Exc (f |_Z)$ fails to be quasi-finite.

We use the superscript $m$ to denote the intersection with a face $\Delta^m \into \Delta^n$.  Let $Z_e \into Z$ denote the locus where the fiber dimension of $Z \to \Delta^n \times X$ is $e$, and similarly for ${(Z^m)}_e \into Z^m$ and ${(Z')}_e \into Z'$.  Note we have ${(Z^m)}_e \subseteq {(Z_e)}^m$ and ${(Z')}_e \subseteq Z_e \cap Z'$.

The exceptional locus is determined by the ramification locus of $f$ on $X$ and the double point locus on $Z$.  In any case, the exceptional locus $Z'$ is an ample divisor on $Z$, and it holds that ${(Z')}^m= \Exc (f |_{Z^m})$.  If $f$ is sufficiently general, then for every face $\Delta^m \into \Delta^n$, the intersection of ${(Z')}^m$ with ${(Z^m)}_e$ is proper.

Assume we are in the first case, so the index $f$ can be increased.  Consider any face $\Delta^m \into \Delta^n$.  Then the dimension of the generic fiber of the composition ${(Z')}^m \cap {(Z^m)}_e \subsetneq {(Z^m)}_e \to pr_{12}({(Z^m)}_e)$ must be less than or equal to $e-1$, therefore $pr_{12} ( {( {(Z')}^m \cap {(Z^m)}_e )}_e )$ cannot be dense in  $pr_{12}({(Z^m)}_e)$.  Since ${({(Z')}^m)}_e = {( {(Z')}^m \cap {(Z^m)}_e )}_e$, it follows that $pr_{12} ({({(Z')}^m)}_e )$ cannot be dense in $pr_{12}({(Z^m)}_e)$, hence the codimension of $pr_{12} ({({(Z')}^m)}_e )$ in $\Delta^m \times X$ is strictly larger than that of $pr_{12}({(Z^m)}_e)$.  Hence we increased the index $f$.

In the second case, the image of $Z_e$ is zero-dimensional in $\Delta^n \times X$.  Then $Z' \cap Z_e$ generically has dimensional less than or equal to $e-1$ over the zero-dimensional scheme $pr_{12}(Z_e)$, hence the locus in $Z'$ that could contain an $e$-dimensional fiber itself is of dimension less than or equal to $e-1$.  Hence $Z'$ cannot contain any $e$-dimensional fibers (i.e., ${(Z')}_e = \emptyset$) and we decreased the index $e$. 

Since the support conditions are compatible with pushforward and pullback by finite flat morphisms ($\ref{elem support properties}(\ref{elem support properties 2},\ref{elem support properties 3})$), it suffices to show $f( \Exc (f |_Z) )$ extends to a support that is dominant over $\Delta^n \times \bb{A}^d$ and quasi-finite except along the locus where $f( \Exc (f |_Z) )$ fails to be quasi-finite.  Now $f( \Exc (f |_Z) )$ is supported over a divisor $D \into \Delta^n \times \bb{A}^d$, and this is true on all faces $\Delta^m \into \Delta^n$.  Now take an arbitrary extension of $f( \Exc (f |_Z) )$ to a codimension $q$ dominant support (there is at least one, namely $f(Z)$), and apply Suslin's moving lemma to this support and require the moving morphism to be the identity on the divisor $D$.  Now for $Z'$ take the pullback via $f$ of the moved extension. \end{proof}
  
\begin{remark} For an explicit construction of a dominant extension that is quasi-finite except over $D$, see Lemma $\ref{extend from divisor}$, where such an extension is constructed in the projective case.  The construction there is an application of Suslin's method of proof and works equally well in the affine case.  \end{remark}

\begin{proposition}  \label{affine tower collapse} Let $X$ be a smooth equidimensional $k$-scheme that is affine and of dimension $d$.  Suppose $E \in {\bf{Spt}}(k)$ satisfies Nisnevich excision.  Let $S_1, S_2$ be successive support conditions in the tower $\ref{support tower}$.  Then the canonical map $E^{S_1}(X) \to E^{S_2}(X)$ is a weak equivalence.
\end{proposition}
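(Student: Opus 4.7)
The plan is to show that the cofiber $C$ of $E^{S_1}(X) \to E^{S_2}(X)$ is contractible by producing, for every finite subfamily of supports $Z_1 \subset Z_2 \subset S_{2,\leq N}(X)$ with $Z_1 = Z_2 \cap S_1(X)$, a larger pair of finite subfamilies $Z_1' \subset Z_2'$ in $S_1, S_2$ such that the induced composition of cofibers $E^{Z_2 \setminus Z_1}(X \setminus Z_1) \to E^{Z_2' \setminus Z_1'}(X \setminus Z_1')$ is nullhomotopic. Since $C$ is a filtered homotopy colimit of such terms (with the index $N$ controlling only finitely many simplicial degrees at a time), this suffices. Using Remark $\ref{induced decomp}$, decompose $Z_2 = \Ind(Z_2) \cup Z_2^{\circ}$ into its induced and noninduced parts; the induced summand is handled by Lemma $\ref{induced supports}$, and I only need to treat $Z_2^{\circ}$.

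Next I would invoke Proposition $\ref{birationality lemma}$ applied to the noninduced family $Z_2^{\circ}$ to produce a finite (hence flat, by miracle flatness) morphism $f \colon X \to \bb{A}^d$ and a companion family $(Z_2^{\circ})'$ that already lives in the smaller support condition $S_1$, such that $f$ is birational onto its image on each member of $Z_2^{\circ}$ and $Z_2^{\circ} \cap (Z_2^{\circ})'$ contains the exceptional locus of $1 \times f \times 1$ restricted to $Z_2^{\circ}$. Enlarge $Z_1$ to $\widetilde{Z}_1 := Z_1 \cup (Z_2^{\circ})'$, still inside $S_1(X)$, and enlarge $Z_2$ to $Z_2 \cup (Z_2^{\circ})' \subset S_2(X)$. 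By construction, away from $\widetilde{Z}_1$ the map $1 \times f \times 1$ restricts to an isomorphism between $Z_2^{\circ} \setminus Z_2^{\circ} \cap \widetilde{Z}_1$ and its image, with the ramification locus contained in $\widetilde{Z}_1$. The excision statement from Remark $\ref{coh remarks}$(3) thus yields a weak equivalence
$$E^{f(Z_2^{\circ}) \setminus f(Z_2^{\circ}) \cap f(\widetilde{Z}_1)}\bigl(\bb{A}^d \setminus f(\widetilde{Z}_1)\bigr) \xrightarrow{\sim} E^{Z_2^{\circ} \setminus Z_2^{\circ} \cap \widetilde{Z}_1}(X \setminus \widetilde{Z}_1).$$

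On $\bb{A}^d$ I can now apply the controlled moving results of Section $\ref{controlled moving}$. By Proposition $\ref{FS 8.2}$ choose a pseudo-endomorphism $\phi_\bullet$ of $\Delta^\bullet \times \bb{A}^d \times \bb{A}^q$ moving the pushed-forward family $f_*Z_2$ (including $f_*\widetilde{Z}_1$) to a quasi-finite family; by Remark $\ref{move preserves tower}$ and property $\ref{elem support properties}(\ref{elem support properties 2})$ the support condition $S_2$ is preserved along the homotopy. Corollary $\ref{nullhomotopy pairs}$ applied to the pair $f_*\widetilde{Z}_1 \subset f_*Z_2$ on $\bb{A}^d$ then produces a nullhomotopic composition into a term involving the enlarged downstairs supports $F^*f_*Z_2 \cup \phi^{-1}(f_*Z_2)$. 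Pulling back via $f$ (using $\ref{elem support properties}(\ref{elem support properties 3})$ and the fact that $Z \mapsto F^*Z$ and $\phi^{-1}$ are compatible with this flat pullback), and transporting back through the excision isomorphism above, yields the desired enlarged pair $Z_1' \subset Z_2'$ on $X$ together with a nullhomotopy of the corresponding cofiber map. Combined with Lemma $\ref{induced supports}$ for the induced part, this shows every term in the colimit computing $C$ maps nullhomotopically to a further term, so $C$ is contractible.

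The main obstacle will be the bookkeeping that glues the induced and noninduced pieces: one must verify that the moving procedure applied to $f_*Z_2^{\circ}$ on $\bb{A}^d$ does not create spurious incidences with the induced part $\Ind(Z_2)$ of the original family, and that after pulling back, the new supports still lie in $S_2(X)$ (not in some larger support condition) — this is exactly where Remark $\ref{move preserves tower}$ and the finite-flatness of $f$ are essential. A second delicate point is ensuring that the excision from Remark $\ref{coh remarks}$(3) applies with the enlarged $\widetilde{Z}_1$, which requires the birationality lemma to genuinely place the exceptional locus inside the smaller tower step $S_1$; this is guaranteed by item (3) of Proposition $\ref{birationality lemma}$ in both the $f \leq d+n-1$ and $f = d+n$ cases.
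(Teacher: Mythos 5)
Your overall strategy coincides with the paper's: eliminate induced supports via Lemma $\ref{induced supports}$, choose $f \colon X \to \bb{A}^d$ and the companion supports via Proposition $\ref{birationality lemma}$, pass to $\bb{A}^d$ by excision, and import the nullhomotopy of Corollary $\ref{nullhomotopy pairs}$. However, there is a genuine gap at the final step, where you write that ``pulling back via $f$ \ldots and transporting back through the excision isomorphism'' yields the desired nullhomotopy upstairs. The difficulty you are skipping is that $f^{-1}(f(Z))$ is strictly larger than $Z$: away from the exceptional locus it decomposes as a disjoint union $Z \coprod Z^+$, where $Z^+$ consists of the residual preimage components. Consequently $f^*$ does not carry $E^{W \setminus W'}$ to $E^{Z \setminus Z'}$ but to the direct sum $E^{Z \setminus Z'} \oplus E^{Z^+ \setminus Z'^{+}}$, and the nullhomotopy obtained downstairs only controls the composite out of this direct sum. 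To extract a nullhomotopy of the restriction to the summand $E^{Z \setminus Z'}$ one is forced to delete $Z^+$ (or at least the incidence $f^{-1}(R^*W) \cap Z^+$) from the ambient open scheme in the target; but there is no reason for $Z^+$ to lie in $S_1(X)$ -- flat pullback only places $f^{-1}(F^*W)$ in $S_2(X)$ -- so the resulting term does not occur in the colimit computing the cofiber of $E^{S_1}(X) \to E^{S_2}(X)$.

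The paper closes exactly this gap in Steps 3 and 4 of its proof: after exhibiting the nullhomotopy into the term with $Z^+$ removed, it identifies the homotopy fiber of the map ``remove $Z^+$'' with a spectrum supported on $f^{-1}(R^*W) \cap Z^+$ minus the other removed loci, observes that this support is disjoint from $Z$ on the relevant open set, and uses Zariski excision once more to conclude that the map into the term where $Z^+$ has \emph{not} been removed (whose removed set $q^{-1}(W) \cup f^{-1}(F^*W')$ does lie in $S_1(X)$) is already nullhomotopic. Your proposal needs this additional argument (or a substitute for it); as written, the nullhomotopic map you produce does not live in the colimit you need to contract. Your two flagged ``obstacles'' (induced-versus-noninduced bookkeeping and the applicability of excision) are real but are the routine parts; the residual components $Z^+$ are the actual crux.
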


\begin{proof} \textit{Step 1.  Eliminate induced supports.} We have a commutative diagram of morphisms of spectra:
$$\xym{E^{p^{-1}(S_1)}(X) \ar[r] \ar[d] & E^{p^{-1}(S_2)}(X) \ar[d] \\
E^{S_1}(X) \ar[r] \ar[d] & E^{S_2}(X) \ar[d] \\
E^{S_1 \setminus p^{-1}(S_1)}(X) \ar[r] & E^{S_2 \setminus p^{-1}(S_2)}(X) \\ }$$
in which the top arrow is a weak equivalence by Lemma $\ref{induced supports}$.  Hence to prove the proposition it suffices to show the bottom arrow is a weak equivalence, i.e., the cofiber of the bottom arrow is contractible.  By Remark $\ref{induced decomp}$ this cofiber is the homotopy colimit of spectra of the form $E^{Z_2 \setminus Z_1 \cup (Z_2 \cap T )}(X \setminus Z_1 \cup T)$, where $Z_1 \subset S_1(X); Z_2 \subset S_2(X); T \subset p^{-1}(S_2)(X)$ are supports satisfying $\Ind(Z_2) = \Ind(Z_1) = \emptyset$.

\textit{Step 2.  Apply excision.} Suppose given finite sets of supports $Z_1 \subset S_1(X); Z_2 \subset S_2(X); T \subset p^{-1}(S_2)(X)$, and for simplicity of notation assume $Z_1 \into Z_2$.  By Step 1 we may suppose neither of the $Z_i$s is induced.  We will find $Y_1 \subset S_1 \setminus p^{-1}(S_1)(X); Y_2 \subset S_2 \setminus p^{-1}(S_2)(X); T' \subset p^{-1}(S_2)(X)$ such that the induced map
$$E^{Z_2 \setminus Z_1 \cup (Z_2 \cap T)}(X \setminus Z_1 \cup T ) \to E^{Z_2 \cup Y_2 \setminus Z_1 \cup Y_1 \cup ((Z_2 \cup Y_2) \cap T') }(X \setminus Z_1 \cup Y_1 \cup T')$$ is nullhomotopic.

Now choose a finite morphism $f : X \to \bb{A}^d$ as in Proposition $\ref{birationality lemma}$ with $Z_2 = \{ Z^n_i \}$, and let $Z_2'$ denote the supports lying in the smaller support condition $S_1(X)$ and containing the exceptional locus of $1 \times f \times 1 |_{Z_2}$.  

By \cite[Lemma 6.1.1]{MLChow} we have a weak equivalence
$$E^{Z_2 \setminus Z_2 \cap Z_2'} (X \setminus Z_2') \cong E^{f(Z_2) \setminus f(Z_2 \cap Z_2')}(\bb{A}^d \setminus f(Z_2')),$$
and similarly after the removal of $Z_2 \cap T$.  For the same reason we have a weak equivalence
$$E^{Z_1 \setminus Z_1 \cap Z_2'}(X \setminus Z_2') \cong E^{f(Z_1) \setminus f(Z_1 \cap Z_2')}(\bb{A}^d \setminus f(Z_2')).$$

By comparing the cofibers we deduce the map
$$E^{Z_2 \setminus (Z_2 \cap (T \cup Z_2')) \cup Z_1}(X \setminus T \cup Z_2' \cup Z_1) \xrightarrow{\sim} E^{f(Z_2) \setminus f( (Z_2 \cap (T \cup Z_2')) \cup Z_1)}(\bb{A}^d \setminus f( T \cup Z_2' \cup Z_1 ) )$$ is also a weak equivalence.

\textit{Step 3.  Obtain a nullhomotopy.} We use the notations $Z := Z_2, W := f(Z), Z' :=  (Z \cap T) \cup (Z \cap Z_2') \cup Z_1$, and $W' := f(Z')$; we also write $q = \phi \circ f$.  Note we have a decomposition into disjoint simplicial subschemes $$f^{-1}(f(Z \setminus Z \cap Z_2' )) = (Z \setminus Z \cap Z_2') \coprod (Z^+ \setminus {e(Z}^{+}))$$ and therefore a decomposition $$f^{-1}(f(Z \setminus Z')) = (Z \setminus Z') \coprod (Z^+ \setminus Z^{'+} ).$$

Thus we obtain a commutative diagram, where the top row involves evaluation on open subschemes of $\Delta^\bullet \times X \times \bb{A}^q$ and the bottom row on open subschemes of $\Delta^\bullet \times \bb{A}^d \times \bb{A}^q$, in which the bottom arrow is nullhomotopic.
\begin{equation} \label{excision diagram} \xym{E^{Z \setminus Z'} \oplus E^{Z^+ \setminus Z^{'+}} \ar[r] & E^{f^{-1}(F^*W) \setminus q^{-1}(W) \cup f^{-1}(F^*W') } \\
E^{W \setminus W'}  \ar[u]^-{f^*} \ar[r] & E^{  F^*W \setminus \phi^{-1}W \cup F^*W' } \ar[u]^-{f^*} \\ }
\end{equation}

After discarding the incidence $f^{-1}(F^*W) \cap Z^+$ in the top row, we have a commutative diagram:
\begin{equation} \label{excision diagram 2} \xym{ E^{Z \setminus Z'}  \ar[r] & E^{Z \cup f^{-1} (R^*W) \setminus q^{-1}(W) \cup f^{-1}(F^*W') \cup (f^{-1}(R^*W) \cap Z^+)} \\ 
E^{Z \setminus Z'} \oplus E^{Z^+ \setminus Z^{'+}} \ar[u]^{p_1} \ar[r] & E^{Z \cup f^{-1} (R^*W) \setminus q^{-1}(W) \cup f^{-1}(F^*W') \cup (f^{-1}(R^*W) \cap Z^+)} \oplus E^{Z^+ \setminus Z_s^+ }\ar[u]_-{p_1}  \\ } \end{equation}

The space for the top right entry is $X \setminus q^{-1}(W) \cup f^{-1}(F^*W') \cup Z^+$, and $Z_s^+$ denotes the intersection $Z^+ \cap (q^{-1}(W) \cup f^{-1}(F^*W') \cup (f^{-1}(R^*W))$.  In Step 2 we observed the induced map $E^{W \setminus W'} \to E^{Z \setminus Z'}$ is a weak equivalence.  This implies the map in the top row of $\ref{excision diagram}$ factors (up to homotopy) through the nullhomotopy $E^{W \setminus W'}  \to E^{  F^*W \setminus \phi^{-1}W \cup F^*W' } $, therefore the map in the top row of $\ref{excision diagram 2}$ is also a nullhomotopy.

\textit{Step 4.  Shrink supports.}  While we have $q^{-1}(W) \cup f^{-1}(F^*W') \subset S_1(X)$, it does not seem possible to guarantee that $Z^+$ belongs to $S_1(X)$.  However, we can combine excision with the fact that $Z$ and $Z^+$ have intersection contained in $Z_2'$, which does lie in $S_1(X)$.  More precisely, the fact that
$$E^{Z \setminus Z'}  \to E^{Z \cup f^{-1} (R^*W) \setminus q^{-1}(W) \cup f^{-1}(F^*W') \cup (f^{-1}(R^*W) \cap Z^+)}$$
is nullhomotopic implies $E^{Z \setminus Z'}$ factors through the homotopy fiber $fib$ of the canonical map
$$E^{Z \cup f^{-1} (R^*W) \setminus q^{-1}(W) \cup f^{-1}(F^*W')} (X \setminus q^{-1}(W) \cup f^{-1}(F^*W') )\to$$
$$\to E^{Z \cup f^{-1}(R^*W) \setminus q^{-1}(W) \cup f^{-1}(F^*W') \cup (f^{-1}(R^*W) \cap Z^+)}(X \setminus q^{-1}(W) \cup f^{-1}(F^*W') \cup Z^+).$$
(In the first space, we did not discard $Z^+$.)  Now $fib$ is nothing but
$$E^{f^{-1}(R^*W) \cap Z^+ \setminus ((f^{-1}(R^*W) \cap Z^+) \cap (q^{-1}(W) \cup f^{-1}(F^*W'))  } (X \setminus q^{-1}(W) \cup f^{-1}(F^*W') ) ,$$
which may be more transparent if one ignores the removal of $q^{-1}(W) \cup f^{-1}(F^*W')$.  Since the map $E^{Z \setminus Z'} \to fib$ is induced by canonical ``inclusion of support" maps, this map too is compatible with adding supports and shrinking $X$.  In particular, we can remove $Z$.  But ${f^{-1}(R^*W) \cap Z^+ \setminus ((f^{-1}(R^*W) \cap Z^+) \cap (q^{-1}(W) \cup f^{-1}(F^*W') ) }$ is disjoint from (the restriction of) $Z$ on $X \setminus q^{-1}(W) \cup f^{-1}(F^*W')$, hence by excision the homotopy fiber $fib$ is weakly equivalent to the space where $Z$ has also been removed.  Thus the map
$$E^{Z \setminus Z'} \to E^{Z \cup f^{-1} (R^*W) \setminus q^{-1}(W) \cup f^{-1}(F^*W')} (X \setminus q^{-1}(W) \cup f^{-1}(F^*W'))$$
is nullhomotopic, and therefore so is its precomposition with $E^{Z_2 \setminus Z_1} \to E^{Z \setminus Z'}$.  Since this map occurs in the colimit of the cofiber of the map $E^{S_1 \setminus p^{-1}(S_1)}(X) \to E^{S_2 \setminus p^{-1}(S_2)}(X)$, the proof is complete.  \end{proof}

In the proof we used the slightly abusive notation $E^{Z_1 \coprod Z_2}(X) = E^{Z_1}(X) \oplus E^{Z_2}(X)$.  Since there is a canonical fibration sequence $E^{Z_1}(X) \to E^{Z_1 \coprod Z_2}(X) \to E^{Z_2}(X \setminus Z_1)$ and $E^{Z_2}(X) \cong E^{Z_2}(X \setminus Z_1)$ (and similarly with the roles of $Z_1, Z_2$ reversed), this notation is accurate as far as the homotopy groups are concerned.

\begin{corollary} \label{main comparison affine} Let $X$ be a smooth equidimensional affine $k$-scheme.  Suppose $E \in {\bf{Spt}}(k)$ satisfies Nisnevich excision.  Then $\alpha_X: E^{Q}(X) \to E^{(q)}(X)$ is a weak equivalence. \end{corollary}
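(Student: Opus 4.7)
The plan is to obtain $\alpha_X$ as a finite composition of weak equivalences supplied by Proposition~\ref{affine tower collapse}, after passing to truncations in simplicial degree. Fix $N \geq 0$; first I aim to show the truncated map $E^{Q}(X)_{\leq N} \to E^{(q)}(X)_{\leq N}$ is a weak equivalence. Then, since the $r$th homotopy group of the total spectrum of a simplicial spectrum depends only on a truncation at sufficiently large degree (depending on $r$), taking $N$ large relative to $r$ yields that $\alpha_X$ induces an isomorphism on every $\pi_r$, hence is a weak equivalence.

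To produce the required factorization, I would interleave the two filtrations from Section~\ref{sec supports}. Starting from $Q(X) = S^{(q),0}(X)$ and climbing to $S^{(q)}(X) = S^{(q),q}(X)$, for each $e = 1, 2, \ldots, q$ I insert the sub-chain
\[ S^{(q),e-1,0}(X) \subset S^{(q),e,d+N}(X) \subset S^{(q),e,d+N-1}(X) \subset \cdots \subset S^{(q),e,0}(X). \]
Concatenating these for $e = 1, \ldots, q$ gives, at truncation $\leq N$, a finite sequence of successive inclusions between $Q(X)_{\leq N}$ and $S^{(q)}(X)_{\leq N}$. Each step is of one of the two types appearing in the tower~\ref{support tower} together with the two cases of Proposition~\ref{birationality lemma}: either $f$ decreases by one at fixed $e$, or $e$ increments by one with $f$ resetting to $d+N$ (which at truncation level $N$ is the largest value of $f$ that can arise). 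Proposition~\ref{affine tower collapse} supplies a weak equivalence at each step, and composing them produces the desired weak equivalence $E^Q(X)_{\leq N} \xrightarrow{\sim} E^{(q)}(X)_{\leq N}$.

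The main obstacle has already been overcome in Proposition~\ref{affine tower collapse}, so the present deduction is essentially bookkeeping. The only care required is the truncation argument at the end: one must know that $\pi_r$ of the total spectrum of a simplicial spectrum is computed by a sufficiently high truncation, which is a standard fact. Once $N$ has been fixed, the chain of successive inclusions above is genuinely finite, so only finitely many applications of Proposition~\ref{affine tower collapse} are required at each truncation level.
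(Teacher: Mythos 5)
Your argument is correct and is exactly the paper's intended deduction: Corollary \ref{main comparison affine} is obtained by truncating at simplicial degree $N$, concatenating the towers \ref{dim support tower} and \ref{support tower} into a finite chain of successive inclusions from $Q(X)_{\leq N}$ to $S^{(q)}(X)_{\leq N}$, applying Proposition \ref{affine tower collapse} at each step, and then invoking the fact that $\pi_r$ of the total spectrum depends only on a truncation. This matches the outline given in Section 2.2 of the paper, so nothing further is needed.
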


\begin{remark} \label{ML funct} This result should be compared with the version of Chow's moving lemma used by Levine to construct functorial models of $E^{(q)}$ in case $B = \spec k$.  In this remark we temporarily switch notation to consider codimension $\geq q$ support conditions on cosimplicial schemes of the form $\Delta^\bullet \times X$, not just those of the form $\Delta^\bullet \times X \times \bb{A}^q$.  For a finite set $\cal{C}$ of locally closed subsets $C$ of $X$ and a function $e : \cal{C} \to \bb{N}$, one can define a support condition $S^{(q)}_{\cal{C},e}(X)$ consisting of subschemes $Z \into \Delta^\bullet \times X$ intersecting all the faces in codimension $q$, and with the improperness of the incidences of the form $Z \cap (\Delta^\bullet \times C)$ controlled by the function $e$ \cite[Defn.~2.6.1]{MLChow}.  We have $S^{(q)}_{\cal{C},e}(X) \subset S^{(q)}(X)$ and we may form the spectrum with supports $E^{(q)}(X)_{\cal{C},e}$.  

Levine's result is that, for $j : U \subset X$ an affine open subscheme of $X \in {\bf{Sm}} / k$, the canonical map $E^{(q)}(U)_{j^*\cal{C}, j^* e} \to E^{(q)}(U)$ is a weak equivalence \cite[Thm.~2.6.2(2)]{MLChow}.  Combined with the homotopy invariance $E^{(q)}(X) \xrightarrow{\sim} E^{(q)}(X \times \A)$ for $X \in Sch / k$ \cite[Thm.~3.3.5]{MLChow}, we see that, for a smooth affine $k$-scheme $X$, the canonical inclusion of supports map is a weak equivalence $\iota_{\cal{C}, e} : E^{(q)}(X \times \bb{A}^q)_{p^*\cal{C}, p^*e} \xrightarrow{\sim} E^{(q)}(X \times \bb{A}^q)$.  (Here $p : X \times \bb{A}^q \to X$ is the projection.)  Since a quasi-finite support intersects properly any set of the form $\Delta^n \times C \times \bb{A}^q$, for any $\cal{C}, e$ we have an inclusion of support conditions $Q(X) \subset S^{(q)}_{p^*\cal{C}, p^*e}(X \times \bb{A}^q)$.  Thus our result says the source of the weak equivalence $\iota_{\cal{C},e}$ may be replaced by the spectrum with supports in the smaller (and functorial) condition $Q(X)$. \end{remark}

\begin{remark} \label{Nis exc remark} The presheaf $E^Q$ admits a canonical morphism to its Nisnevich-fibrant replacement $E^Q_{Nis}$.  The functorial model $\tilde{E}^{(q)}$ of $E^{(q)}$ constructed in \cite[Thms.~7.4.1,7.5.4]{MLChow} satisfies Zariski excision by \cite[Thm.~3.2.1]{MLHtyCon}, hence Nisnevich excision if the base field is perfect.  Hence the canonical morphism $E^Q \to \tilde{E}^{(q)}$ factors through $E^Q_{Nis}$.  We take sectionwise fibrant replacements of $E^Q_{Nis}$ and $\tilde{E}^{(q)}$ \cite[Lemma 2.4(a)]{Jardine} and suppress it from the notation.

The morphism $E^Q \to \tilde{E}^{(q)}$ induces a weak equivalence on affine $k$-schemes by Corollary $\ref{main comparison affine}$, hence induces a weak equivalence on Nisnevich stalks (i.e., is a Nisnevich-local weak equivalence).  The morphism $E^Q \to E^Q_{Nis}$ is also a Nisnevich-local weak equivalence.  Therefore $E^Q_{Nis} \to \tilde{E}^{(q)}$ is a Nisnevich-local weak equivalence.  Furthermore, both $E^Q_{Nis}$ and $\tilde{E}^{(q)}$ are sectionwise fibrant presheaves of spectra satisfying Nisnevich excision.  A result of Thomason \cite[Prop.~2.3]{Thomason} implies $E^Q_{Nis} \to \tilde{E}^{(q)}$ is a sectionwise weak equivalence, in particular is a sectionwise weak equivalence on affine $k$-schemes.  But then also $E^Q \to E^Q_{Nis}$ must be a sectionwise weak equivalence on affine $k$-schemes.  In this sense $E^Q$ has the correct value on affine $k$-schemes.

Corollary $\ref{main comparison affine}$ implies $E^Q$ and $E^{(q)}$ have the same stalks, hence Thomason's spectrum level Godement construction \cite[Sect.~1]{Thomason} on $E^Q$ is sectionwise weakly equivalent to $E^{(q)}$.  Corollary $\ref{main comparison affine}$ also shows an affine open cover is sufficiently fine that the associated \v{C}ech hypercohomology spectrum of $E^Q$ is weakly equivalent to the Nisnevich-fibrant model.  \end{remark}

\begin{remark} \label{FS comparison} For algebraic $K$-theory, Friedlander-Suslin showed the map $\alpha_X$ is a weak equivalence for $X$ the spectrum of a field \cite[Thm.~8.6]{FS}.  Identifying the cofiber of $K^{(q+1)} \to K^{(q)}$ with motivic cohomology, one obtains a strongly convergent spectral sequence from the resulting tower.  The cofiber of the morphism $K^{Q,q+1}(X) \to K^{Q, q}(X)$ was also identified with motivic cohomology, for semilocal $X$ \cite[Thm.~11.5]{FS}; and the resulting tower was shown to yield a strongly convergent spectral sequence \cite[Thm.~13.13]{FS}.  Therefore both towers determine strongly convergent spectral sequences with the same initial page on semilocal schemes, hence $\alpha_X : K^{Q,q}(X) \to K^{(q)}(X)$ is a weak equivalence for semilocal $X$.  If $X$ is the semilocalization of a quasi-projective variety at a finite set of points, the semilocal stalk can be computed over affine open neighborhoods.  Since algebraic $K$-theory satisfies Nisnevich excision, Corollary $\ref{main comparison affine}$ recovers the comparison results of Friedlander-Suslin. \end{remark}

\section{Moving supports on projective varieties}

We will analyze the proof of Suslin's moving lemma \cite[Thm.~1.1]{Sus} in order to understand the closures of cycles in $\Delta^n \times \bb{A}^d \times \bb{A}^q$ on $\Delta^n \times \bb{P}^d \times  \bb{A}^q$.  The main point is to achieve a move on the finite part (i.e., $\bb{A}^d \subset \bb{P}^d$) that does not disturb the situation at infinity (i.e., $\bb{P}^{d-1} \into \bb{P}^d$).  For a given collection of cycles lying in $S_n^{(q), e,f}(\bb{P}^d)$, a sufficiently general hyperplane will intersect the family in a lower filtration level of $\ref{support tower}$.  We will show the incidence with the hyperplane at infinity can be controlled while moving the finite part to a quasi-finite cycle.  Therefore the collection can be moved to a lower filtration level and the essential geometric input to the affine case extends to the projective case.

\begin{example}  We start with an example illustrating the method of Suslin's moving lemma.  

\subtheorem \textbf{(0-simplices)}  Suppose given a divisor $D^0 = V(f(y,x)) \into \A_y \times \A_x$ with $f(y,x) = y g(x)$.  Then $D^0$ is quasi-finite over $(\A \setminus 0)_y$, but it contains the fiber $0_y \times \A_x$.  We want to move $D^0$ to a divisor that is quasi-finite over $\A_y$, i.e., we seek an $\A_x$-morphism $\phi: \A_y \times \A_x \to \A_y \times \A_x$ such that $\phi^{-1}(D^0) \to \A_y$ is quasi-finite.  We set $\phi(y,x) = (y + p(x), x)$ where $p$ is a nonconstant polynomial.  Then $\phi^{-1}(D^0)$ is defined by the vanishing of $p(x)g(x) + yg(x)$.  This expression does not vanish identically for any value of $y$, hence all of the fibers ${(\phi^{-1}(D^0) )}_{y_0}$ are finite.

We notice two features of this example.  First, the construction extends to $\oline{D^0} = Z(Y_1 g(x) ) \into \bb{P}^1_{Y_0, Y_1} \times \A_x$: we have the morphism $\oline{\phi} : \bb{P}^1_{Y} \times \A_x \to \bb{P}^1_{Y} \times \A_x$ given by $[Y_0 : Y_1], x \mapsto [Y_0 : Y_1 + p(x) Y_0], x$, and ${\oline{\phi}}^{-1}(\oline{D^0}) = Z((Y_1 + p(x) Y_0) g(x))$ is quasi-finite over $\bb{P}^1_{Y}$.  Second, the maps $\phi$ and $\oline{\phi}$ are related to the identity map by homotopies $\Phi$ and $\oline{\Phi}$:
$$\oline{\Phi} : \A_t \times \bb{P}^1_Y \times  \A_x \to \A_t \times \bb{P}^1_Y \times  \A_x$$
$$(t, [Y_0 : Y_1], x) \mapsto (t, [Y_0: Y_1 + (1-t)p(x) Y_0], x)$$
Let $p :  \A_t \times \bb{P}^1_Y \times  \A_x \to \bb{P}^1_Y \times \A_x$ denote the projection.  The divisor $\scr{D}^0 := {\oline{\Phi}}^{-1}( p^{-1} (\oline{D^0}))$, cut out by the equation $(Y_1 + p(1-t)Y_0) g = 0$, has the following properties:
\begin{enumerate}
\item $\scr{D}^0 |_{t=0}= {\oline{\phi}}^{-1}(\oline{D^0})$,
\item $\scr{D}^0 |_{t=1} = \oline{D^0}$, and
\item $\scr{D}^0$ is quasi-finite over $(\A \setminus 1)_t \times \bb{P}^1_Y$, in fact over $ \A_t \times \bb{P}^1_Y \setminus ( 1, [1:0] )$.
\end{enumerate}
To obtain $\Phi$, we can restrict to $\A_{Y_0 \neq 0} \subset \bb{P}^1_Y$.

\subtheorem \textbf{(1-simplices)} We use the coordinates given by the identification $\Delta^1 = \spec (k[t_0,t_1] /(t_0 + t_1=1))$.  Now let $D^1 \into \Delta^1 \times \A_y \times \A_x$ be the divisor defined by the vanishing of $t_1(t_0 + yg) + t_0(t_1 +yh)$; here $g,h \in k[x]$ are polynomials that (for simplicity) are not scalar multiples of each other.  Then $D^1 |_{t_0=0} = V(yg)$ and $D^1 |_{t_1=0} = V(yh)$, and $D^1$ is quasi-finite over $\Delta^1 \times \A_y \setminus \{((0,1),0) \cup ((1,0),0) \}.$

By our analysis of the 0-simplices, the map $\phi: \A_y \times \A_x \to \A_y \times \A_x$ given by $\phi(y,x) = (y+p(x),x)$  moves both $V(yg)$ and $V(yh)$ to quasi-finite cycles, hence $\phi_0 = 1 \times \phi : V(t_0t_1) \times \A_y \times \A_x \to V(t_0t_1) \times \A_y \times \A_x$ moves $D^1 |_{t_0=0}$ and $D^1 |_{t_1=0}$ to quasi-finite cycles.  We wish to extend $\phi_0$ to the algebraic $1$-simplex, i.e., we seek an $\A_x$-morphism $\phi_1 : \Delta^1 \times \A_y \times \A_x \to \Delta^1 \times \A_y \times \A_x$ such that:
\begin{itemize}
\item $\phi_1 |_{V(t_0t_1)} = \phi_0$, and
\item $\phi_1^{-1}(D^1)$ is quasi-finite over $(\Delta^1 \setminus V(t_0 t_1) ) \times \A_y$.
\end{itemize}
(From these it follows that $\phi_1^{-1}(D^1)$ is quasi-finite over $\Delta^1 \times \A_y$.)  Choose homogeneous forms (in $x$) $a,b$ satisfying $\deg(a) = \deg(b) > \deg (p)$, i.e., $a$ and $b$ are scalar multiples of $x^n$ with $n > \deg(p)$.  Then we define:
$$\phi_1(t_0,t_1,y,x) = (t_0 + t_0 t_1 a(x), t_1 - t_0 t_1a(x), y + p(x) + t_0 t_1 b(x), x).$$
One checks immediately the first property, that $\phi_1$ restricts to $\phi_0$ on the vertices of $\Delta^1$.  To check the second property, observe that $\phi_1^{-1}(D^1)$ is defined by the vanishing of
$$ f_{\phi_1^{-1}(D^1)} := (t_1 - t_0 t_1a)(t_0 + t_0t_1a + g(y + p + t_0t_1b)) + (t_0 + t_0t_1a)(t_1 - t_0t_1a + h(y + p + t_0 t_1 b)),$$
and we just need to show that, away from $t_0t_1 =0$, there are no values of $(t_0,t_1,y)$ making $f_{\phi_1^{-1}(D^1)}$  identically zero.  To see this, we notice the term of largest $x$-degree is ${(t_0 t_1)}^2 abc$.  (Here and in a few paragraphs, $c$ denotes $\lf_x (h-g)$, the leading $x$-term of $h-g$.)  Since this term is nonzero when $t_0t_1 \neq 0$, it follows that $f_{\phi_1^{-1}(D^1)}$ is not the zero polynomial on this locus.

Similar to the $0$-simplices, the situation closes up nicely to an endomorphism $\oline{\phi_1}$ on $\Delta^1 \times \bb{P}^1_Y \times \A_x$: we have
$$\oline{\phi_1}(t_0,t_1,[Y_0:Y_1],x) = (t_0 + t_0 t_1 a(x), t_1 - t_0 t_1a(x), [Y_0: Y_1 + (p(x) + t_0 t_1 b(x))Y_0], x)$$
moving $\oline{D^1} :=Z(t_1(Y_0 t_0 + Y_1g) + t_0(Y_0t_1 + Y_1h))$ to the $\Delta^1 \times \bb{P}^1_Y$-quasi-finite divisor ${\oline{\phi_1}}^{-1}(\oline{D^1})$ (the quasi-finiteness along $Y_0 = 0$ follows from the assumption that $g$ is not a scalar multiple of $h$).
Furthermore we have a homotopy $\oline{\Phi_1}$ between $\oline{\phi_1}$ and the identity, the endomorphism $\oline{\Phi_1}$ of $\Delta^1 \times \A_t \times \bb{P}^1_Y \times  \A_x$ sending $(t_0, t_1, t, [Y_0 : Y_1], x)$ to
$$(t_0 + (1-t)t_0 t_1 a(x), t_1 - (1-t) t_0 t_1a(x), t, [Y_0: Y_1 + (1-t)(p(x) + t_0 t_1 b(x))Y_0], x).$$

Therefore the divisor $ \scr{D}^1 := {\oline{\Phi_1}}^{-1}(p^{-1}(\oline{D^1}))$ (again $p$ is the projection away from $\A_t$) enjoys the following properties:
\begin{enumerate}
\item $\scr{D}^1 |_{t=0}= {\oline{\phi_1}}^{-1}(\oline{D^1})$,
\item $\scr{D}^1 |_{t=1} = \oline{D^1}$, and
\item $\scr{D}^1$ is quasi-finite over $\Delta^1 \times (\A \setminus 1)_t \times \bb{P}^1_Y$.
\end{enumerate}

To see the last property, we argue as follows.  Along $Y_0 \neq 0$, we calculate the term of largest $x$-degree in the function whose vanishing defines ${\oline{\Phi_1}}^{-1}(p^{-1} (\oline{D^1}))$; we find ${((1-t)t_0t_1)}^2 abc$ and conclude the quasi-finiteness on $(1-t)t_0t_1 \neq 0$.  Along $Y_0 = 0$, the leading $x$-term is $Y_1(1-t)t_0t_1ac$, so the conclusion holds over $(\Delta^1 \setminus V(t_0 t_1) ) \times (\A \setminus 1)_t \times \bb{P}^1_Y$.  Along the locus $t_0 t_1 = 0$ we may apply our analysis of the $0$-simplices.  The conclusion follows.

Finally we analyze the pullback of $\scr{D}^1$ along the closed immersion $f_{01} : \Delta^1 \into \Delta^1 \times \A_t$ determined by the equation $t=t_0$.  The immersion $f_{01}$ is the unique linear morphism sending $(0,1)$ to $((0,1),0)$, and $(1,0)$ to $((1,0),1)$.  By the third property of $\scr{D}^1$, the non quasi-finite locus of $f_{01}^* (\scr{D}^1)$ lies over $t=1$, so over $t_0=1$.  The equation defining $f_{01}^* (\scr{D}^1) |_{t_0=1}$ is simply $hY_1 =0$.  For us the relevant point is that the non quasi-finite locus of  $f_{01}^* (\scr{D}^1)$, namely $((1,0),0)$, is contained in the non quasi-finite locus of $\oline{D^1}$.
\end{example}

\subsection{Moving lemma with infinity}

\begin{notation} We use the standard coordinates $t_0, \ldots, t_n$ on $\Delta^n$.  We use coordinates $Y_0, \ldots, Y_d$ on $\bb{P}^d$.  We have the open dense subscheme $\bb{A}^d_0 = \bb{A}^d_{Y_0 \neq 0} \subset \bb{P}^d$ and the complementary hyperplane at infinity $\bb{P}^{d-1}_\infty$, defined by the vanishing of $Y_0$.  Actually we think of $\bb{P}^{d-1}_\infty$ as a hyperplane which is very general with respect to a finite collection of data rather than as a fixed hyperplane in a fixed projective space; in other words we choose coordinates so that $Z(Y_0)$ is very general.  We use coordinates $x_1, \ldots , x_q$ on $\bb{A}^q$. \end{notation}

Given a support $Z \in \Delta^\bullet \times X \times \bb{A}^q$, we would like consider supports of the form $Z \cap (\Delta^\bullet \times H \times \bb{A}^q)$ for $H \into X$ an ample divisor.  However, these fail to be dominant over $\Delta^\bullet \times X$.  In some sense the next two lemmas allow us to consider supports lying over a divisor in $\Delta^\bullet \times X$.  The proof of Lemma $\ref{extend from divisor}$ is based on the proof of Suslin's moving lemma.

\begin{lemma} \label{extend from divisor} Let $W \into \Delta^n \times \bb{P}^d \times \bb{A}^q$ be a support in $S_n^{(q)}(\bb{P}^d)$, and let $D \into \Delta^n \times \bb{P}^d$ be an effective divisor that is not pulled back from $\Delta^n$.  Then there exists a support $W' \into \Delta^n \times \bb{P}^d \times \bb{A}^q$ in $S_n^{(q)}(\bb{P}^d)$ such that $W \cap D = W' \cap D$ set-theoretically, and such that $W' $ is quasi-finite over $\Delta^n \times \bb{P}^d \setminus D$. \end{lemma}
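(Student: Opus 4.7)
The plan is to adapt Suslin's moving lemma \cite[Thm.~1.1]{Sus} to the projective setting, producing $W'$ as the pullback of $W$ under an $\bb{A}^q$-automorphism of $\Delta^n \times \bb{P}^d \times \bb{A}^q$ that restricts to the identity on both $D$ and the hyperplane at infinity. The preceding example already exhibits this strategy in the cases $n=0,1$ with $d=1$; the proof globalizes it.

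First I choose projective coordinates $[Y_0 : \cdots : Y_d]$ on $\bb{P}^d$ so that the hyperplane $H_\infty := V(Y_0)$ is in general position with respect to $W$ and $D$. Using the hypothesis that $D$ is not pulled back from $\Delta^n$, I can arrange that $H_\infty$ is not a component of $D$, that $D_\infty := D \cap (\Delta^n \times H_\infty)$ is a genuine divisor on $\Delta^n \times H_\infty$, and that the non quasi-finite locus of the restricted map $W \cap (\Delta^n \times H_\infty \times \bb{A}^q) \to \Delta^n \times H_\infty$ is contained in $D_\infty$. Set $\bb{A}^d_0 := \bb{P}^d \setminus H_\infty$, $W_0 := W \cap (\Delta^n \times \bb{A}^d_0 \times \bb{A}^q)$, and $D_0 := D \cap (\Delta^n \times \bb{A}^d_0)$.

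Next I apply Suslin's affine moving lemma with base $\Delta^n \times \bb{A}^d_0 \cong \bb{A}^{n+d}$, parameter scheme $\bb{A}^q$, cycle $W_0$, divisor $D_0$, and prescribed morphism $\phi_{D_0} = \id_{D_0 \times \bb{A}^q}$; iterating face by face as in the proof of Proposition $\ref{FS 8.2}$, I arrange the resulting $\bb{A}^q$-endomorphism $\phi$ to respect the simplicial structure of $\Delta^n$. As the preceding example illustrates, Suslin's morphism has the form of translations $y_i \mapsto y_i + P_i(t, x)$ by polynomials in the simplex and $\bb{A}^q$ coordinates (independent of the $y$'s), and such a morphism admits a canonical homogenization to an $\bb{A}^q$-automorphism $\oline{\phi}$ of $\Delta^n \times \bb{P}^d \times \bb{A}^q$ whose restriction to $\Delta^n \times H_\infty \times \bb{A}^q$ is the identity.

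Set $W' := \oline{\phi}^{-1}(W)$. Since $\oline{\phi}$ is a face-respecting $\bb{A}^q$-automorphism, the codimension and face-intersection conditions for $W$ transfer to $W'$, giving $W' \in S_n^{(q)}(\bb{P}^d)$. Because $\oline{\phi}$ is the identity on $D_0$ (by Suslin's prescription) and on $H_\infty$ (by the projective extension), it fixes $D = D_0 \cup D_\infty$ pointwise, so $W' \cap D = W \cap D$ set-theoretically. For quasi-finiteness over $\Delta^n \times (\bb{P}^d \setminus D)$: on $\Delta^n \times (\bb{A}^d_0 \setminus D_0)$ this is the conclusion of Suslin's lemma for $\phi^{-1}(W_0)$; on $\Delta^n \times (H_\infty \setminus D_\infty)$ one has $W'|_{H_\infty} = W|_{H_\infty}$, which is quasi-finite by the genericity arranged in Step 1. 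The principal difficulty is this last genericity statement: one must control the non quasi-finite locus of $W \cap H_\infty$ using only a generic linear choice of $H_\infty$, and the hypothesis that $D$ is not pulled back from $\Delta^n$ is essential to ensure $D_\infty$ is large enough on $\Delta^n \times H_\infty$ to absorb that locus.
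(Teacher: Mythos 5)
There is a genuine gap, and it sits exactly where you flagged ``the principal difficulty.'' Your $W'$ is a pullback $\oline{\phi}^{-1}(W)$ under a morphism that is the identity on $\Delta^n \times H_\infty \times \bb{A}^q$, so along the hyperplane at infinity you have $W' = W$ identically. The lemma, however, demands quasi-finiteness of $W'$ over all of $\Delta^n \times \bb{P}^d \setminus D$, including over $(\Delta^n \times H_\infty) \setminus D$. The non-quasi-finite locus of $W \to \Delta^n \times \bb{P}^d$ is a \emph{fixed} closed subset which can have codimension one (e.g.\ $W = B' \times L$ with $B' \into \Delta^n \times \bb{P}^d$ a divisor meeting the faces properly and $L \subset \bb{A}^q$ a line gives a support in $S_n^{(q)}(\bb{P}^d)$ whose fibers are one-dimensional over all of $B'$), and it need not be contained in the given divisor $D$. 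No generic choice of coordinates can change this: a general $H_\infty$ meets such a locus in a set not contained in $D_\infty$, so $W'|_{H_\infty} = W|_{H_\infty}$ fails to be quasi-finite off $D$. A second, related problem: once you prescribe $\phi_{D_0} = \id$, Suslin's endomorphism is \emph{not} a translation $y_i \mapsto y_i + P_i(t,x)$ independent of the $y$'s (a $y$-independent translation fixing a divisor dominating $\Delta^n$ pointwise is the identity); the construction necessarily multiplies the generic perturbation by powers of the equation $h$ of $D_0$, and such maps homogenize only to rational maps of $\bb{P}^d$ with base locus, not to endomorphisms restricting to the identity at infinity. So both the extension to $\bb{P}^d$ and the behaviour at infinity break down.

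The paper's proof takes a different and more robust route: it does not move $W$ by any morphism at all, but perturbs its defining equations directly (Suslin's \emph{method} rather than his statement). With $f_1,\ldots,f_r$ ($r \geq q$) generating the ideal of $W$ and $h$ the equation of $D$, one sets $f_i' = f_i^{|h|} + h^{|f_i|} p_i(x)$ for generic homogeneous forms $p_i$ of large degree in the $\bb{A}^q$-variables, and takes $W' = V(f_1',\ldots,f_r')$. On $D$ one has $h=0$, so $W' \cap D = W \cap D$ set-theoretically; away from $D$ the leading $x$-forms of the $f_i'$ are $h^{|f_i|}p_i$, so the part at infinity (in a compactification $\bb{A}^q \subset \bb{P}^q$) of each fiber is cut out by the $r \geq q$ generic forms $p_i$ and is empty, forcing finite fibers over all of $\Delta^n \times \bb{P}^d \setminus D$, including over $H_\infty \setminus D$. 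If you want to salvage your approach you would have to abandon the requirement that the moving map fix $H_\infty$ pointwise (which is incompatible with extending a divisor-constrained Suslin morphism anyway) and argue directly on equations as above.
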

\begin{proof} Let $\{ f_1, \ldots , f_r \}$ be generators for the ideal of $W$ in $\Delta^n \times \bb{P}^d \times \bb{A}^q$, and let $h$ be a generator for the ideal of $D \into \Delta^n \times \bb{P}^d$.  Note that $r \geq q$.  We let $W'$ be defined by the ideal generated by the elements $f'_i := {f_i(t,Y,x)}^{|h|} + {h(t,Y)}^{|f_i|}p_i(x)$ for $i = 1, \ldots, r$.  Here $| \cdot |$ denotes the $Y$-degree of a form, and the $p_i$s are sufficiently general homogeneous forms of the same sufficiently large (compared to the $x$-degree of the $f_i$s) degree.

Set theoretically we have $W' \cap D = W \cap D$.  If $D \not \supset W'$ (i.e., $h(t,Y) \not \in I_{W'}$), then the dimension of $W'$ is $n+d$.  More generally, to show $W'$ dominates $\Delta^n \times \bb{P}^d$, we need to show no element of the form $g(t,Y)$ belongs to the ideal defining $W'$.  Suppose we have an equation $\sum_i a_i(t,Y,x)f'_i(t,Y,x) = g(t,Y)$.  This implies the terms of positive $x$-degree on the left hand side all vanish.  Looking at the largest $x$-degree term, we find the relation $\sum_i a'_i p_i = 0$, where $a'_i$ is the largest $x$-degree term of $a_i$.  Specializing, we obtain a nontrivial $k$-relation among the $p_i$s.  This is impossible if the $p_i$s are chosen generically.

Now we calculate the dimensions of the fibers of the morphism $W' \to \Delta^n \times \bb{P}^d$.  We choose coordinates on $\bb{P}^q$ so that $\bb{A}^q \subset \bb{P}^q$ is the complement of the zero locus of $X_0$.  The closure $\oline{W'}$ of $W'$ in $\Delta^n \times \bb{P}^d \times \bb{P}^q$ is defined by the ideal generated by $X^{n_i}_0 {f_i}^{|h|} + h^{|f_i|} p_i$ for suitable $n_i > 0$ (because the $p_i$s have large $x$-degree), hence the ideal of the infinite part $\oline{W'} \cap (\Delta^n \times \bb{P}^d \times \bb{P}^{q-1}_\infty)$ is generated by the elements $h^{|f_i|}p_i$.  Over $(\delta_0, y_0) \in \Delta^n \times \bb{P}^d \setminus D$, the infinite part of the fiber (i.e., $(\delta_0, y_0) \cap \oline{W'} \cap (\Delta^n \times \bb{P}^d \times \bb{P}^{q-1}_\infty)$) is defined by the ideal generated by $p_1, \ldots, p_r$.  Since $r \geq q$, the infinite part of the fiber is empty if the $p_i$s are chosen so that $Z(p_1, \ldots, p_r) = \emptyset$.  Hence, away from $D$, the fiber of $W'$ itself must be finite.   \end{proof}

\begin{corollary} \label{hyperplane lowers filtration} Let $W \into \Delta^n \times \bb{P}^d \times \bb{A}^q$ be a support in $S^{(q),e,f}_n(\bb{P}^d)$.  Then there exists a hyperplane $H \into \bb{P}^d$ and a support $W' \into \Delta^n \times \bb{P}^d \times \bb{A}^q$ satisfying:
\begin{itemize}
\item $W \cap (\Delta^n \times H \times \bb{A}^q) = W' \cap (\Delta^n \times H \times \bb{A}^q)$; and
\item $W'$ belongs to $S_n^{(q), e,f+1}(\bb{P}^d)$, if $f \leq d+n-1$; or to $S_n^{(q),e-1,0}(\bb{P}^d)$, if $f=d+n$.
\end{itemize}  \end{corollary}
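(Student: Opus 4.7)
The plan is to apply Lemma~\ref{extend from divisor} with $D = \Delta^n \times H$ for a sufficiently general hyperplane $H \into \bb{P}^d$, and then to verify that the resulting $W'$ lies in the claimed smaller support condition by a dimension count using the genericity of $H$.

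For each face $\Delta^m \into \Delta^n$ I set $W^m := W \cap (\Delta^m \times \bb{P}^d \times \bb{A}^q)$, and let $T_m \subset \Delta^m \times \bb{P}^d$ denote the image of the locus of $e$-dimensional fibers of the projection $W^m \to \Delta^m \times \bb{P}^d$. By hypothesis each irreducible component of each $T_m$ has codimension $\geq f$ in $\Delta^m \times \bb{P}^d$. I would then choose $H$ so that, for every face and every irreducible component $T_{m,i}$ of $T_m$, the projection of $T_{m,i}$ to $\bb{P}^d$ is not contained in $H$; the excluded locus in the dual projective space is a finite union of proper closed subsets, so a general $H$ suffices. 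In the borderline case $f = d+n$, a dimension count forces $T_m = \emptyset$ for every proper face and $T_n$ to be zero-dimensional, in which case it is enough that a general $H$ avoid the finitely many points obtained by projecting $T_n$ to $\bb{P}^d$.

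With such $H$ fixed, I would apply Lemma~\ref{extend from divisor} to obtain $W' \in S_n^{(q)}(\bb{P}^d)$ satisfying $W \cap D = W' \cap D$ set-theoretically and $W'$ quasi-finite over $\Delta^n \times \bb{P}^d \setminus D$. Both of these properties are inherited by any face restriction $(W')^m$, so each $(W')^m$ agrees with $W^m$ on $\Delta^m \times H \times \bb{A}^q$ and is quasi-finite over $\Delta^m \times \bb{P}^d \setminus (\Delta^m \times H)$. The set-theoretic equality $W_p = W'_p$ at each $p \in D$ identifies the fiber dimensions of $W'$ over points of $D$ with those of $W$, while off $D$ all fibers of $W'$ are finite. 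Consequently the image of the $e$-dimensional fiber locus of each $(W')^m$ equals $T_m \cap (\Delta^m \times H)$, and the genericity of $H$ ensures this intersection has codimension $\geq f+1$ in $\Delta^m \times \bb{P}^d$.

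In the case $f \leq d+n-1$ this yields $W' \in S_n^{(q),e,f+1}(\bb{P}^d)$ directly. In the case $f = d+n$ the intersection $T_m \cap (\Delta^m \times H)$ is empty on every face, which forces the maximal fiber dimension of each $(W')^m$ down to $\leq e-1$, placing $W'$ in $S_n^{(q),e-1,0}(\bb{P}^d)$. The main obstacle I anticipate is the coordinated choice of a single hyperplane that works simultaneously across all faces, together with confirming that the purely set-theoretic control provided by Lemma~\ref{extend from divisor} is strong enough to transfer fiber-dimension information from $W$ to $W'$; once those points are in hand the codimension bookkeeping is a routine generic-hyperplane argument.
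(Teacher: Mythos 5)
Your proposal is correct and takes essentially the same route as the paper: the paper likewise chooses a general hyperplane so that on every face the image of the $e$-dimensional fiber locus meets $\Delta^m \times H$ properly (or, when $f = d+n$, is avoided entirely by $H$), then applies Lemma $\ref{extend from divisor}$ with $D = \Delta^n \times H$ to get $W'$ agreeing with $W$ along $H$ and quasi-finite elsewhere, concluding via the set-theoretic identification of fibers over $D$. The only cosmetic difference is that the paper records the containment ${(W \cap H)}_e \subseteq W_e \cap H$ rather than your equality, which is all the argument needs.
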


\begin{proof} As a first approximation, we show there exists a hyperplane $H \into \bb{P}^d$ such that $W \cap H := W \cap (\Delta^n \times H \times \bb{A}^q)$ belongs to the smaller support condition, except that it is not dominant over $\Delta^n \times \bb{P}^d$.   Then Lemma $\ref{extend from divisor}$ provides an extension of $W \cap H$ to a dominant support that is quasi-finite except over $H$, and hence $W'$ belongs to the smaller support condition.

We use the superscript $m$ to denote the intersection with a face $\Delta^m \into \Delta^n$.  We let $W_e \into W$ denote the locus where the fiber dimension of $W \to \Delta^n \times \bb{P}^d$ is $e$, and similarly we have ${(W \cap H)}_e \into W \cap H$.  Note we have ${(W \cap H)}_e \subseteq W_e \cap H$.

Suppose we are in the first case.  We choose the hyperplane $H$ so that for every face $\Delta^m \into \Delta^n$, the intersection $pr_{12}({(W^m)}_e) \cap (\Delta^m \times H)$ is proper: since the linear system $|H|$ is basepoint free, $pr_{12}({(W^m)}_e)$ cannot be contained in every $\Delta^m \times H$, hence it is not contained in $\Delta^m \times H$ for $H$ general.  Then the codimension of $pr_{12}({(W^m)}_e) \cap (\Delta^m \times H)$ in $\Delta^m \times \bb{P}^d$ must be strictly larger than that of $pr_{12}({(W^m)}_e)$.  Since clearly $pr_{12} ({({(W \cap H)}^m)}_e) \subseteq pr_{12}({(W^m)}_e) \cap (\Delta^m \times H)$, we increased the index $f$.

In the second case, the image $pr_{12}(W_e)$ is a zero-dimensional subscheme in $\Delta^n \times \bb{P}^d$, hence $pr_2(W_e) \into \bb{P}^d$ is also zero-dimensional.  We can find a hyperplane $H \into \bb{P}^d$ such that $pr_2(W_e) \cap H = \emptyset$.  Since $(W \cap H)_e \subset W_e \cap (\Delta^n \times H \times \bb{A}^d)$, it follows that $(W \cap H)_e = \emptyset$, hence $W' \in S_n^{e-1,0}(\bb{P}^d)$ as desired. \end{proof}

\begin{theorem} \label{projective suslin} Let $W \into \Delta^n \times  \bb{P}^d \times \bb{A}^q$ be a support in $S_n^{(q),e,f}(\bb{P}^d)$.  Then for $i=0, \ldots, n$ there exist $\bb{A}^q$-morphisms $\phi_i : \Delta^i \times \bb{P}^d \times  \bb{A}^q \to \Delta^i \times \bb{P}^d \times  \bb{A}^q$ with the following properties.

\begin{enumerate}
\item For every $0 \leq i \leq n$ and every face $\delta : \Delta^i \into \Delta^n$, we have $\phi_n \circ \delta = \delta \circ \phi_i$.  In other words, the $\{ \phi_i \}$ define a pseudo-endomorphism $\phi_\bullet$ of the truncated cosimplicial scheme $(\Delta^\bullet \times \bb{P}^d \times  \bb{A}^q )_{\leq n}$.
\item The support $\phi_n^{-1}(W) \into \Delta^n \times \bb{P}^d \times  \bb{A}^q$ is quasi-finite over $ \Delta^n \times \bb{A}^d_0$.
\item The support $\phi_n^{-1}(W)$ belongs to $S_n^{(q), e,f+1}(\bb{P}^d)$, if $f \leq d+n-1$; or to $S_n^{(q),e-1,0}(\bb{P}^d)$, if $f=d+n$.
\end{enumerate}

\end{theorem}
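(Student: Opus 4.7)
The plan is to perform the moving construction of Proposition~\ref{FS 8.2} on the affine chart $\bb{A}^d_0 \subset \bb{P}^d$, arranging the moving morphisms to extend canonically to $\bb{P}^d$ fixing $\bb{P}^{d-1}_\infty = Z(Y_0)$ pointwise. The improvement in filtration level will come entirely from the geometry at infinity via Corollary~\ref{hyperplane lowers filtration}; condition (2) then forces the locus of bad fibers to live on the fixed hyperplane, where it inherits the good support behavior.

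First, choose coordinates on $\bb{P}^d$ so that $H := \bb{P}^{d-1}_\infty$ is very general with respect to $W$ and to every face of $W$; Corollary~\ref{hyperplane lowers filtration} then yields a support $W'$ in the lower filtration level of~$\ref{support tower}$ satisfying $W \cap (\Delta^n \times H \times \bb{A}^q) = W' \cap (\Delta^n \times H \times \bb{A}^q)$ set-theoretically. Next, build $\phi_\bullet$ inductively on the simplicial degree, mirroring the vertex-by-vertex then face-by-face construction in Proposition~\ref{FS 8.2}: at stage $i$ apply Suslin's moving lemma to the affine family $W \cap (\Delta^i \times \bb{A}^d_0 \times \bb{A}^q)$, taking the prescribed divisor to include $V(t_0 \cdots t_i)$ so that the new morphism glues compatibly with those produced at earlier stages. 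As the example preceding the theorem illustrates in the case $d = 1$, these morphisms take the form of polynomial shears in the affine coordinates of $\bb{A}^d_0$, and so extend canonically to $\bb{A}^q$-morphisms $\phi_i$ of $\Delta^i \times \bb{P}^d \times \bb{A}^q$ fixing $\bb{P}^{d-1}_\infty$ pointwise via homogenization by the appropriate power of $Y_0$. Conditions (1) and (2) are inherited directly from the affine construction and the face-compatible choice of divisors.

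For (3), because $\phi_n$ is the identity on $\bb{P}^{d-1}_\infty$, one has $\phi_n^{-1}(W) \cap (\Delta^m \times H \times \bb{A}^q) = W' \cap (\Delta^m \times H \times \bb{A}^q)$ on every face $\Delta^m \subseteq \Delta^n$. By (2) the locus of $e$-dimensional fibers of $\phi_n^{-1}(W) \to \Delta^n \times \bb{P}^d$ is supported over $\Delta^n \times H$, and on any given face it agrees with the corresponding locus for $W'$. Since $W' \in S_n^{(q),e,f+1}(\bb{P}^d)$ (respectively $S_n^{(q),e-1,0}(\bb{P}^d)$), the projection of this locus to $\Delta^m \times \bb{P}^d$ has codimension at least $f+1$ (respectively the index $e$ drops), establishing (3). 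The principal obstacle is the explicit construction in the inductive step: one must verify that Suslin's method yields moving morphisms of the polynomial-shear shape that homogenize cleanly to $\bb{P}^d$ while satisfying the face compatibility conditions. The example above carries out this verification in full for $d = 1$, and the essential device, incorporating $Y_0$ as the homogenizing variable and choosing the shear terms divisible by $Y_0$, carries over to general $d$ without new difficulties.
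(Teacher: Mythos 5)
Your overall strategy coincides with the paper's: move $W$ on the affine chart $\bb{A}^d_0$ by morphisms chosen to extend to $\bb{P}^d$ preserving $Z(Y_0)$, and control the filtration level at infinity via Corollary \ref{hyperplane lowers filtration}. However, your verification of property (3) contains a genuine gap. You claim the moving morphisms fix $\bb{P}^{d-1}_\infty$ \emph{pointwise} and deduce $\phi_n^{-1}(W) \cap (\Delta^m \times H \times \bb{A}^q) = W' \cap (\Delta^m \times H \times \bb{A}^q)$. This is not so: a Suslin-type moving morphism must perturb the simplex coordinates as well as the $\bb{A}^d_0$-coordinates (see the example, where $\phi_1(t_0,t_1,\ldots) = (t_0+t_0t_1a(x),\, t_1-t_0t_1a(x),\ldots)$, and the paper's formula $s_i = t_i(1+\sum_{j\neq i} t_j a_{1,ij}(x))$); only the $\bb{P}^d$-coordinate of a point of $Z(Y_0)$ is fixed. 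Consequently the restriction of $\phi_n^{-1}(W)$ to $\Delta^n \times \bb{P}^{d-1}_\infty \times \bb{A}^q$ is the preimage of $W \cap (\Delta^n \times \bb{P}^{d-1}_\infty \times \bb{A}^q)$ under the non-flat map $(t,y,x)\mapsto (s(t,x),y,x)$, not that set itself, and the conditions $S^{(q),e,f}$ are only known to be stable under flat pullback (Properties \ref{elem support properties}(\ref{elem support properties 3})), so the indices $e,f$ need not survive this pullback. The paper closes exactly this gap by a different device: it compares $\phi_n^{-1}(W)$ with $\phi_n^{-1}(W')$ (these genuinely agree over infinity), and shows $\phi_n^{-1}(W')$ remains in the smaller support condition by viewing the moving morphisms as an algebraic family over the affine space $\bb{A}^M$ of coefficients $(\underline{a},\underline{p})$, noting the fiber at $(\underline{0},\underline{0})$ is $W'$ itself, and invoking openness in families (Properties \ref{elem support properties}(\ref{elem support properties 4})). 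Some such argument is indispensable in your write-up.

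A secondary point: you defer the entire inductive construction and the quasi-finiteness verification as carrying over "without new difficulties." In the paper this is where most of the work lies: one needs the explicit shears built from the symmetric sums $T_r$ with antisymmetry constraints on the $a_{r,I}$ (to keep $\sum s_i = 1$ and to restrict correctly to faces), strictly increasing degree conditions, and the leading-form computation of Lemma \ref{move on vertices} to kill the infinite part of the fibers over $t_{i_0}\cdots t_{i_r}\neq 0$. Your outline points in the right direction, but property (2) on all faces simultaneously is not automatic from the $d=1$ example.
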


We will use the $t=0$ case of the following.

\begin{lemma} \label{move on vertices} Let $V \into \bb{P}^d \times \bb{A}^q$ be a closed subscheme of dimension $d+t$.  Let $\phi : \bb{P}^d \times \bb{A}^q \to \bb{P}^d \times \bb{A}^q$ be defined by $\phi ([Y_0 : \cdots : Y_d], x ) = ([Y_0 : Y_1 + p_1(x) Y_0 : \cdots : Y_d + p_d(x) Y_0], x)$.  Then for sufficiently generic homogeneous forms $p_1, \ldots, p_d$ of the same sufficiently large degree, the fibers of the morphism $\phi^{-1}(V) \to \bb{P}^d$ have dimension $\leq t$ over $\bb{A}^d_{0}$.  \end{lemma}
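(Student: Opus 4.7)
Since $\phi$ is the identity on $\{Y_0 = 0\} \times \bb{A}^q$, the conclusion is entirely a statement about the affine chart $\bb{A}^d_0 \times \bb{A}^q$, on which $\phi$ becomes the $\bb{A}^q$-automorphism $\phi_p \colon (y,x) \mapsto (y + p(x), x)$. Set $V_1 := V \cap (\bb{A}^d_0 \times \bb{A}^q)$, of dimension at most $d+t$. Projection to $x$ identifies the fiber of $\phi^{-1}(V) \to \bb{P}^d$ over $y^0 \in \bb{A}^d_0$ with $\{x \in \bb{A}^q : (y^0 + p(x), x) \in V_1\}$, and the substitution $y' = y^0 + p(x)$ identifies this with the fiber over $y^0$ of
\[
\beta_p \colon V_1 \to \bb{A}^d_0, \qquad (y',x) \mapsto y' - p(x).
\]
Thus it suffices to show that, for $p$ sufficiently generic of sufficiently large degree $N$, every fiber of $\beta_p$ has dimension at most $t$.

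\textbf{Incidence dimension count.} Let $P$ denote the affine space of $d$-tuples of homogeneous forms of degree $N$ in $x_1,\ldots,x_q$, and form the incidence
\[
\cal{F} = \{(p, y^0, x) \in P \times \bb{A}^d_0 \times \bb{A}^q : (y^0 + p(x), x) \in V_1\}.
\]
The substitution $(p,y^0,x) \leftrightarrow (p, y^0 + p(x), x)$ is an isomorphism $\cal{F} \cong P \times V_1$, so $\dim \cal{F} \leq \dim P + d + t$. The fiber of the projection $\cal{F} \to P \times \bb{A}^d_0$ over $(p,y^0)$ is identified via the $x$-projection with $\beta_p^{-1}(y^0)$. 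Upper semicontinuity of fiber dimension therefore shows that the bad locus $B := \{(p,y^0) : \dim \beta_p^{-1}(y^0) > t\}$ is closed in $P \times \bb{A}^d_0$, and the standard count bounds $\dim B \leq \dim P + d - 1$.

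\textbf{Uniformity; the main obstacle.} The critical step, and the principal obstacle, is to show that for $N$ sufficiently large the projection $B \to P$ is not dominant; the naive dimension count bounds $\dim B$ but not the dimension of its image in $P$. My plan is to follow the strategy of Suslin's proof of \cite[Thm.~1.1]{Sus}, adapted to the constraint that $p$ be homogeneous of the prescribed degree $N$. The key observation is that homogeneity of $p$ permits extending $\beta_p$ to a rational map $\bar\beta_p \colon \bar V_1 \dashrightarrow \bb{P}^d$ on the closure $\bar V_1 \subset \bb{A}^d_0 \times \bb{P}^q$ of $V_1$ in the $x$-direction, given in homogeneous coordinates on the target by $[X_0^N : y'_1 X_0^N - P_1(X) : \cdots : y'_d X_0^N - P_d(X)]$, where each $P_i$ is the homogenization of $p_i$. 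For $p$ generic of large $N$, a Bertini-type argument shows that the base locus $\{X_0 = 0\} \cap \{P_1 = \cdots = P_d = 0\}$ has codimension $d$ in $\bb{P}^{q-1}_\infty$ and meets the boundary $\bar V_1 \cap (\bb{A}^d_0 \times \bb{P}^{q-1}_\infty)$ in the expected (smaller) dimension. Away from this base locus, $\bar\beta_p$ sends the boundary into $\bb{P}^{d-1}_\infty$, so the fibers of $\bar\beta_p$ over $\bb{A}^d_0$ coincide with those of $\beta_p$; combining this with the generic-fiber count forces every fiber of $\beta_p$ over $\bb{A}^d_0$ to have dimension at most $t$. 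Both the genericity of $p$ and the largeness of $N$ are essential for the Bertini-type transversality that produces the uniform bound.
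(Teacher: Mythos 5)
Your reduction to the affine chart and the identification of the fibers with those of $\beta_p(y',x)=y'-p(x)$ are correct, and you rightly observe that the incidence count over the parameter space $P$ only bounds $\dim B$, not the image of $B$ in $P$, so it cannot by itself give the required bound on \emph{every} fiber. The gap is that the mechanism you propose to close this --- the base locus of the extended rational map $\bar\beta_p$ plus a Bertini statement plus ``the generic-fiber count'' --- does not connect. If you compactify both in the sense of taking the closure $\bar F$ of a fiber $F=\beta_p^{-1}(y^0)$ inside $\bb{A}^d_0\times\bb{P}^q$, then it is true that $\bar F\setminus F$ lands in the base locus $\{X_0=P_1=\cdots=P_d=0\}$; but this closure may add \emph{no} points at all even when $\dim F$ is large, because along a curve in $F$ going to $x$-infinity the coordinate $y'=y^0+p(x)$ typically escapes $\bb{A}^d_0$, so bounding $\bar F\setminus F$ bounds nothing. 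If instead you compactify only in $x$ (where the boundary of an affine fiber genuinely has dimension $\dim F-1$), the boundary is \emph{not} contained in $\{P_1=\cdots=P_d=0\}$: the limit of $y^0+p(x)$ may simply tend to $\bb{P}^{d-1}_\infty$ (your own remark that $\bar\beta_p$ sends the boundary to infinity is exactly this escape, not a contradiction). Finally, no ``generic-fiber count'' can finish: even for a proper map from a $(d+t)$-dimensional source to $\bb{P}^d$, fiber dimension can jump on special fibers, so upper semicontinuity only controls fibers over a dense open set, never all of $\bb{A}^d_0$.

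What the paper (following Suslin) uses, and what your sketch is missing, is control of $V$ at the \emph{bi}-infinite locus. One chooses generators $G_j$ of the ideal of $V$ so that the doubly-leading forms $\tilde g_j=\lf_x\lf_y g_j$ cut out $V_{\infty,\infty}\into\bb{P}^{d-1}\times\bb{P}^{q-1}$, and observes that for $\deg p_i>\deg_x g_j$ one has $\lf_x\bigl(g_j(y+p(x);x)\bigr)=\tilde g_j(p_1(x),\ldots,p_d(x);x)$, \emph{independently of} $y$. Hence the part at $x$-infinity of the closure of every fiber lies in the fixed set $\oline{pr_2(V)}\cap\bb{P}^{q-1}_\infty\cap\{\tilde g_j(p(x);x)=0\ \forall j\}$, and Suslin's Prop.~1.7 shows that for generic $p$ of large degree these forms impose $d$ independent conditions on the $(d+t-1)$-dimensional set $\oline{pr_2(V)}\cap\bb{P}^{q-1}_\infty$, cutting it to dimension $\le t-1$; then every fiber has dimension $\le t$. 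Your hyperplane-at-infinity idea is the right compactification, but the set that must be cut down is governed by the substituted leading forms $\tilde g_j(p(x);x)$ (equivalently by $V_{\infty,\infty}$), not by the common zero locus of the $P_i$, and supplying that step is essentially the whole proof.
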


\begin{proof}

This is essentially a direct consequence of Suslin's argument \cite{Sus} with an empty divisor; we have chosen a particular moving morphism $\phi$ that preserves the situation at infinity.  For completeness we copy the argument of \cite{Sus} adapted to our situation.

For any $G(Y,x) \in k[\bb{A}^q][Y_0, \ldots, Y_d]$, i.e., a homogeneous form in the $Y$s with coefficients in $k[x_1, \ldots , x_q]$, we let $g(y,x)$ denote the dehomogenization with respect to $Y_0$.  Let $V_{\infty, \infty} \into \bb{P}^{d-1} \times \bb{P}^{q-1}$ be the variety obtained by taking the closure of $V_\infty$ in $\bb{P}^{d-1}_\infty \times \bb{P}^q$ , then intersecting with $\bb{P}^{d-1}_\infty \times \bb{P}^{q-1}_\infty$.  Now choose generators $G_j(Y,x)$ for the ideal of $V$ with the property that the forms $\tilde{g_j}:= \lf_x \lf_y g_j(y,x)$ generate the ideal of leading forms, i.e., cut out the ideal of $V_{\infty, \infty}$.

If we choose $p_i$'s homogeneous of the same degree, with $\deg(p_i) > \max (\deg_x g_j)$, then we have $\lf_x( g_j(y_1 + p_1(x), \ldots, y_d + p_d(x) ; x)) = \tilde{g}_j(p_1(x), \ldots, p_d(x);x)$.  Therefore, over $Y_0 \neq 0$, the infinite part of the fiber of $\phi^{-1}(V)$ is contained in the scheme cut out by the forms $\tilde{g_j}(p_1(x), \ldots, p_d(x);x)$; note this is independent of $y$.  Since $pr_2(V) = pr_2(\phi^{-1}(V))$, the infinite part of the fiber of $\phi^{-1}(V)$ is also contained in $\oline{pr_2(V)} \cap \bb{P}^{q-1}_\infty$.  

Now $\oline{pr_2(V)} \cap \bb{P}^{q-1}_\infty$ has dimension $\leq d+t-1$.  For sufficiently general $p_i$, intersecting $\oline{pr_2(V)} \cap \bb{P}^{q-1}_\infty$ with all of the $\tilde{g}_j(p_1(x), \ldots, p_d(x);x)$ imposes $d$ independent conditions, hence the intersection has dimension $\leq t-1$ \cite[Prop.~1.7]{Sus}.  Hence the infinite part of the fiber has dimension $\leq t-1$, so the fiber itself has dimension $\leq t$.  \end{proof}

\begin{proof}[Proof of Theorem $\ref{projective suslin}$.] We write $W^i := W \cap (\Delta^i \times \bb{P}^d \times \bb{A}^q)$ for the intersection with a dimension $i$ face $\Delta^i \into \Delta^n$.

Let $G_1(t_i, Y, x), \ldots, G_s(t_i, Y,x)$ be equations defining $W$, and suppose they are chosen so that the forms $\tilde{g}_j := \lf_x \lf_{t,y} g_j$ generate the ideal of leading forms, i.e., cut out the subscheme obtained by taking the closure of $W |_{\bb{A}^d_0} \into \Delta^n \times \bb{A}^d_0 \times \bb{A}^q$ in $\bb{P}^{n+d} \times \bb{A}^q$, intersecting with $\bb{P}^{n+d-1}_\infty \times \bb{A}^q$, and then taking the closure via $\bb{A}^q \subset \bb{P}^q$ and intersecting with $\bb{P}^{q-1}_\infty$.

\textit{Definition of $\phi_0$.}  We construct the morphism $\phi_0 : \Delta^0 \times \bb{P}^d \times \bb{A}^q \to \Delta^0 \times \bb{P}^d \times \bb{A}^q$ as the product of the identity on $\Delta^0$ with a morphism $\phi : \bb{P}^d \times \bb{A}^q \to \bb{P}^d \times \bb{A}^q$.  Since $W$ intersects all the vertices in codimension at least $q$, we may apply Lemma $\ref{move on vertices}$ with $V = \cup_{\Delta^0 \into \Delta^n} W^0$ and $t=0$.  Thus we obtain the desired morphism on the $0$-simplices:
$$\phi( [ Y_0 : \cdots : Y_d], x) = ([Y_0 : Y_1 + p_{0,1}(x) Y_0 : \cdots : Y_d + p_{0,d}(x) Y_d], x),$$
where the $p_{0,k}$ are general homogeneous forms with degree large compared to the $x$-degree of the equations defining $W$. 

\textit{Definition of $\phi_1$ (model for induction step).} To construct the morphism $\phi_1$, we choose generic homogeneous forms $a_{1, ij}, p_{1,k}$ for $0 \leq i < j \leq n$ and $1 \leq k \leq d$, all of the same degree $m$, with $m > \deg (p_{0,k})$.  To give a formula for $\phi_1$ we introduce some more notation.  Let $[n]$ denote the ordered set $\{0 ,\ldots, n \}$.  For $\phi_1$ we will use $T_1 = \sum_{i <j \in [n]} t_i t_j = t_0t_1 + t_0t_2 + \cdots + t_{n-1}t_n$; for general $1 \leq r \leq n$ we set 
$T_r = \sum_{i_0 < \cdots < i_r \in [n] } t_{i_0} \cdots t_{i_r}$.

Now we define $\phi_1(t_i, Y, x) = (s_i, Z, x)$, where:
\begin{itemize}
\item $s_i = t_i(1 + \sum_{j \neq i}t_j a_{1, ij}(x) )$, with $a_{1, ij} = - a_{1, ji}$; and
\item $Z_0 = Y_0$, $Z_k = Y_k + (p_{0,k}(x) + T_1 p_{1,k}(x))Y_0$ for $1 \leq k \leq d$.
\end{itemize}
(The condition $a_{1, ij} = - a_{1, ji}$ ensures $\sum s_i =1$.)  Since $T_1=0$ and $s_i = t_i$ at every vertex, it follows that $\phi_1$ agrees with $\phi_0$ on the vertices.  Following the method used in Lemma $\ref{move on vertices}$, we calculate the fiber dimension of $\phi_1^{-1}(W)$ along the dimension 1 faces $\Delta^1 \into \Delta^n$.  Without loss of generality we work on the face $\Delta^1 \into \Delta^n$ defined by $t_2 = t_3 = \cdots = t_n = 0$.  Then we have:
$$\phi_1(t_0,t_1, Y_k ,x) = (t_0 + t_0t_1a_{1, 01}(x), t_1 - t_1 t_0 a_{1, 01}(x), Y_k + (p_{0,k}(x) + t_0t_1p_{1,k}(x) )Y_0, x).$$

\textit{$1$-simplices: finite part.} First we work on the finite part $\Delta^1 \times \bb{A}^d_{0} \times \bb{A}^q$.  Then, because $\deg (p_{1,k}) = \deg(a_{1, ij}) >\deg(p_{0,k}) > \deg_x (g_j)$, we have:
$$\lf_x( \phi_1^*(g_j |_{t_2= \cdots = t_n=0} ) ) = {(t_0 t_1)}^{\deg_{t,y} g_j} \tilde{g_j} (a_{1, 01}, - a_{1, 01}, p_{1,1}, \ldots, p_{1,d}, x).$$

Since $W^1$ has dimension $\leq d+1$, clearly $\oline{pr_3(W^1)}$ has dimension $\leq d+1$.  Since $\phi_1$ is an $\bb{A}^q$-morphism, we have $\oline{pr_3(W^1)} = \oline{pr_3(\phi_1^{-1}(W^1))}$.  Therefore $\oline{pr_3(\phi_1^{-1}(W^1)) } \cap \bb{P}^{q-1}_\infty$ has dimension $\leq d$.  Away from the vertices, our calculation of the leading $x$-term shows we can impose $d+1$ further independent conditions by the choices of $a_{01}, p_1, \ldots ,p_d$.  Hence we find, away from $t_0t_1 = 0$, the infinite part of the fiber is empty.  Hence the fiber itself is finite away from the vertices.  The fiber is finite on the vertices as well since $\phi_1 |_{\Delta^0} = \phi_0$.  Therefore, for every face $\Delta^1 \into \Delta^n$, the fibers of $\phi_1^{-1}(W^1) \to \Delta^1 \times \bb{P}^d$ are finite over $\Delta^1 \times \bb{A}^d_{0}$.  This verifies property (2).

\textit{$1$-simplices: infinite part.} Now we work on the hyperplane at infinity.  By Lemma $\ref{hyperplane lowers filtration}$, we can find a support $W'$ belonging to the smaller condition, and satisfying $W \cap (\Delta^1 \times \bb{P}^{d-1}_\infty \times \bb{A}^q) = W' \cap  (\Delta^1 \times \bb{P}^{d-1}_\infty \times \bb{A}^q)$.  Let $\bb{A}^M$ be the affine space parameterizing choices $(\underline{a}, \underline{p})$ giving rise to moving morphisms $\phi_{1, \underline{a}, \underline{p}}$.  Consider the family:
$$\xym{ \widetilde{\phi_1^{-1}(W^{'1})} \ar[r] \ar[rd] & \Delta^1 \times \bb{P}^d \times \bb{A}^q \times \bb{A}^M \ar[d] \\ & \bb{A}^M }$$
whose fiber over $\underline{a}, \underline{p} \in \bb{A}^M(k)$ is the subscheme ${(\phi_{1, \underline{a}, \underline{p}})}^{-1}(W^{'1}) \into \Delta^1 \times \bb{P}^d \times \bb{A}^q$.  By hypothesis, we have that ${(\phi_{1, \underline{0}, \underline{0}})}^{-1}(W^{'1}) = W^{'1}$ belongs to the smaller support condition.  Since our support conditions are open in algebraic families $ \ref{elem support properties}(\ref{elem support properties 4})$, it holds that ${(\phi_{1, \underline{a}, \underline{p}})}^{-1}(W^{'1})$ belongs to the smaller support condition for most choices.  Our moved support $\phi_1^{-1}(W^1)$ is quasi-finite over $\Delta^1 \times \bb{A}^d_0$ and agrees with $\phi_1^{-1}(W^{'1})$ over $\Delta^1 \times \bb{P}^{d-1}_\infty$, hence the moved support belongs to the smaller support condition for general choices of $\underline{a}, \underline{p}$.  This verifies property (3).

\textit{Definition of $\phi_n$ (general form of induction step).} We proceed inductively, using the evident notation for the components of the morphism: $\phi_r(t_0, \ldots, t_n, [Y_0 : \cdots : Y_d], \underline{x}) = (\phi_r(t_0), \ldots, \phi_r(t_n), [Y_0 : \phi_r(Y_1) : \cdots :\phi_r(Y_d)], \underline{x}).$  So far we have defined:
\begin{itemize}
\item $\phi_0(t_i) = t_i, \phi_0(Y_k) = Y_k + p_{0,k}(x)Y_0$;
\item $\phi_1(t_i) = t_i + \sum_{j \neq i}t_i t_j a_{1, ij} = \phi_0(t_i) + \sum_{j \neq i}t_i t_j a_{1, ij}$, subject to $a_{ij} + a_{ji} = 0$; and
\item $\phi_1(Y_k) = Y_k + (p_{0,k}(x) + T_1 p_{1,k}(x) )Y_0 = \phi_0(Y_k) +  T_1 p_{1,k}(x) Y_0$.
\end{itemize}
Then we set:
\begin{itemize}
\item $\phi_2(t_i) = \phi_1(t_i) + \sum_{\substack{j < k \\ i \neq j,k}} t_i t_j t_k a_{2, ijk}$, subject to $a_{ijk} +a_{jik} + a_{kij} = 0$ if $i < j < k$; and
\item $\phi_2(Y_k) = \phi_1(Y_k) +  T_2 p_{2,k}(x) Y_0$.
\end{itemize}

The morphism $\phi_r$ is constructed from $\phi_{r-1}$ just as $\phi_2$ was constructed from $\phi_1$.  For completeness we produce formulas for the $\phi_r$'s:
\begin{itemize}
\item $\phi_r(t_i) = \phi_{r-1}(t_i) + \sum_{\substack{i_1 < \cdots < i_r  \\ i \neq i_1, \ldots , i_r}} t_i t_{i_1} \cdots t_{i_r} a_{r, i i_1 \cdots i_r}$, subject to  $\sum_{s=0, \ldots r} a_{i_s \hat{I_s}} = 0$; and
\item $\phi_r(Y_k) = \phi_{r-1}(Y_k) + T_rp_{r,k}(x)Y_0.$
\end{itemize}
Here the $a_{r, I}, p_{r,k}$ are homogeneous of the same degree, with this degree being larger than $\deg (a_{r-1, I}) = \deg(p_{r-1,k})$; and $\hat{I_s}$ denotes the sequence $i_0 < \cdots < i_r$ skipping $i_s$.

The verification of the properties for $\phi_r$ proceeds as it did for $\phi_1$.  First we claim $\phi_r$ agrees with $\phi_{r-1}$ along all faces $\Delta^{r-1} \into \Delta^r \into \Delta^n$.  Now $\Delta^{r-1} \into \Delta^n$ is defined by the vanishing of $n-r+1$ of the $n+1$ coordinate functions.  Every term in $T_r$ is a product of $r+1$ coordinate functions; similarly for the difference $\phi_r(t_i) - \phi_{r-1}(t_i)$.  By the pigeonhole principle every term in $T_r$ vanishes along every face $\Delta^{r-1} \into \Delta^n$; similarly for $\phi_r(t_i) - \phi_{r-1}(t_i)$.  Hence the $\{ \phi_i \}$ define a pseudo-endomorphism.

Now we calculate the leading $x$-term of $\phi_r^*(g_j)$ along some face $\Delta^r \into \Delta^n$, say defined by $t_{r+1} = \cdots = t_n = 0$.  We find that $\lf_x (\phi_r^*(g_j |_{t_{r+1} = \cdots = t_n = 0}))$ is equal to
$${(t_0 \cdots t_r)}^{\deg_{t,y}g_j} \tilde{g_j}(a_{r, 01\cdots r}, a_{r, 102 \cdots r}, \ldots , a_{r, r01 \cdots r-1}, p_{r,1}, \ldots p_{r,d}, x).$$
The constraint $a_{0 \cdots r} + a_{10 \cdots r} + \cdots a_{r1 \cdots r-1} = 0$ means there are $r$ moduli among the $a$'s, and $d$ among the $p$'s.  Hence by choosing these appropriately we can, on the locus $t_0 \cdots t_r \neq 0$, slice the infinite part of the fiber $\oline{pr_3(W^r)}_\infty$ (which has dimension at most $d+r-1$) until it is empty.  Hence the fibers themselves are finite.  On $t_0 \cdots t_r =0$, the fibers are finite by induction.  We conclude $\phi_r^{-1}(W^r)$ is quasi-finite over $\Delta^r \times \bb{A}^d_0$, as property (2) requires.  To verify (3), we choose a support $W'$ belonging to the smaller support condition and agreeing with $W$ over $\Delta^n \times \bb{P}^{d-1}_\infty$.  The openness property $\ref{elem support properties}(\ref{elem support properties 4})$ of the conditions $S^{(q), e,f}$ guarantees most moving morphisms keep $W'$ in the same support condition, hence most morphisms carry $W$ to a smaller support condition. \end{proof}

\begin{theorem} \label{projective suslin homotopy} Let the notation and hypotheses be as in Theorem $\ref{projective suslin}$.  There exists a homotopy $\Phi_\bullet$ relating $\phi_\bullet$ to the identity, i.e., there are $\bb{A}^q$-morphisms (for $i=0, \ldots ,n$) $\Phi_i :\Delta^i \times \A \times \bb{P}^d \times \bb{A}^q \to \Delta^i \times \A \times \bb{P}^d \times \bb{A}^q$ such that $i_0 \circ \phi_i = \Phi_i \circ i_0$ and $i_1 = \Phi_i \circ i_1$ as morphisms $\Delta^i \times \bb{P}^d \times \bb{A}^q \to \Delta^i  \times \A \times \bb{P}^d \times \bb{A}^q$.  This homotopy can be chosen so that it enjoys the following properties.

\begin{enumerate}
\item The support $\Phi_n^{-1}(p^{-1} (W)) \into \Delta^n \times \A \times \bb{P}^d \times \bb{A}^q$ is quasi-finite over $\Delta^n \times (\A \setminus 1) \times \bb{A}^d_{0} $.
\item The support $\Phi_n^{-1}(p^{-1}(W))$ belongs to $S_n^{(q),e,f}(\A \times \bb{P}^d )$.
 \item The support ${(f_j \times 1 \times 1)}^{-1}(\Phi_n^{-1}(p^{-1}(W)))$ belongs to $S_n^{(q),e,f}(\bb{P}^d)$ for any morphism of the form $f_j : \Delta^n \to \Delta^n \times \A$.
\end{enumerate}
Here $p : \Delta^\bullet \times \A \times \bb{P}^d \times \bb{A}^q \to \Delta^\bullet \times \bb{P}^d \times \bb{A}^q$ denotes the projection. \end{theorem}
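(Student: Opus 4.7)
The plan is to construct $\Phi_\bullet$ by inserting a factor $(1-t)$ into each deformation term of the formulas for $\phi_\bullet$ produced in Theorem \ref{projective suslin}. Concretely, reusing the same auxiliary forms $a_{r, I}$ and $p_{r, k}$ fixed in the proof of Theorem \ref{projective suslin}, set
\begin{align*}
\Phi_r(t_i) &= t_i + (1-t)\bigl(\phi_r(t_i) - t_i\bigr), \\
\Phi_r(Y_0) &= Y_0, \\
\Phi_r(Y_k) &= Y_k + (1-t)\bigl(\phi_r(Y_k) - Y_k\bigr) \quad \text{for } 1 \leq k \leq d,
\end{align*}
leaving the $\A$-coordinate $t$ and the $\bb{A}^q$-coordinates fixed. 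At $t=0$ this specializes to $\phi_r$ and at $t=1$ to the identity, and the linear constraint $\sum a = 0$ still guarantees $\sum \Phi_r(t_i) = 1$. Compatibility with coface maps is inherited from $\phi_\bullet$, so the $\{\Phi_i\}$ form a pseudo-endomorphism homotopy between $\phi_\bullet$ and the identity.

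To verify property (1), I would redo the leading-$x$-term analysis from the proof of Theorem \ref{projective suslin} applied to the defining equations $\Phi_n^{*} p^{*} g_j$ of $\Phi_n^{-1}(p^{-1}(W))$. Along a face $\Delta^r \into \Delta^n$ defined by $t_{r+1} = \cdots = t_n = 0$, the leading $x$-term carries a factor ${((1-t) t_0 \cdots t_r)}^{\deg_{t,y} g_j}$ multiplying the expression $\tilde{g_j}(a_{r, 01 \cdots r}, \ldots, p_{r, 1}, \ldots, p_{r, d}, x)$ that already appeared in the proof of Theorem \ref{projective suslin}. For sufficiently generic choices this cuts $\oline{pr_3(W^r)} \cap \bb{P}^{q-1}_\infty$ down to empty on the open locus $(1-t) t_0 \cdots t_r \neq 0$, hence gives quasi-finiteness over $\Delta^n \times (\A \setminus 1) \times \bb{A}^d_{0}$; on the locus $t_0 \cdots t_r = 0$ I proceed by induction on $r$, and at $t = 1$ the homotopy is the identity on $W$, so the exceptional locus $\{t = 1\}$ is necessary and unavoidable.

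For (2) and (3), I would apply the openness-in-families Properties \ref{elem support properties}(\ref{elem support properties 4}) to the algebraic family of pullbacks parameterized by the moduli of moving forms $(\underline{a}, \underline{p}) \in \bb{A}^M$. First use Corollary \ref{hyperplane lowers filtration} to produce an auxiliary support $W' \into \Delta^n \times \bb{P}^d \times \bb{A}^q$ agreeing with $W$ over $\Delta^n \times \bb{P}^{d-1}_\infty$ and lying in the refined condition. At the origin of $\bb{A}^M$ the morphism $\Phi_n$ is the identity, so $\Phi_n^{-1}(p^{-1}(W'))$ equals $p^{-1}(W')$, which lies in $S^{(q), e, f}_n(\A \times \bb{P}^d)$ by flat pullback; openness therefore keeps this support in the same condition for a generic choice of $(\underline{a}, \underline{p})$, and combining with the quasi-finiteness from (1) over $\Delta^n \times (\A \setminus 1) \times \bb{A}^d_{0}$ proves (2). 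For (3), note that $p \circ (f_j \times 1 \times 1) = \id$, so at the origin of $\bb{A}^M$ the family $\{{(f_j \times 1 \times 1)}^{-1} \Phi_n^{-1}(p^{-1}(W'))\}$ specializes to $W'$; openness gives membership in $S^{(q), e, f}_n(\bb{P}^d)$ for generic $(\underline{a}, \underline{p})$, and the quasi-finiteness from (1) extends the conclusion from $W'$ to $W$.

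The hard part will be arranging the moduli choices coherently across the whole argument: the pseudo-endomorphism structure demands a single choice of $(\underline{a}, \underline{p})$ working for every simplicial degree simultaneously, and the openness arguments for (2) and (3) must be compatible with both the leading-term calculation for (1) and the filtration improvement already extracted from Theorem \ref{projective suslin}. Since each of the required conditions is open and nonempty in $\bb{A}^M$ and only finitely many such conditions arise (indexed by faces of $\Delta^n$, by $j \in I_n$, and by the auxiliary support $W'$), their intersection is nonempty and a single compatible choice exists.
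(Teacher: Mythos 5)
Your construction coincides with the paper's: the homotopy is exactly ``$t\,\id + (1-t)\phi_r$'' built from the same auxiliary forms $a_{r,I}, p_{r,k}$, and property (1) is checked by the identical leading-$x$-term computation, which acquires the factor ${((1-t)t_0\cdots t_r)}^{\deg_{t,y}g_j}$, with induction on the faces.  For (2) and (3) the paper applies the openness property $\ref{elem support properties}(\ref{elem support properties 4})$ directly to $W$ (noting $(\underline{0},\underline{0})$ recovers $p^{-1}(W)$) and observes that $f_{1_n}$ gives back $W$ while every other $f_j$ has smaller incidence with the non-quasi-finite locus, whereas you route both through the auxiliary support $W'$ and a patching argument over the finite and infinite parts; this is a harmless variant of the same openness argument, so the proposal is correct and essentially matches the paper.
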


\begin{proof} The homotopy between $\phi_\bullet$ and the identity, $``t \id + (1-t) \phi,"$ is defined as follows.  The value of $\Phi_r(t,Y,t,x)$ is
$$(\ldots, t t_i + (1-t) \phi_r(t_i), \ldots, [Y_0 : t Y_1 + (1-t) \phi_r(Y_1) : \cdots : t Y_d + (1-t) \phi_r(Y_d)], x).$$
Then $\Phi_r(t,Y,1,x) = \id : \Delta^n \times \bb{P}^d \times \bb{A}^q \to \Delta^n \times \bb{P}^d \times \bb{A}^q$ and $\Phi_r(t,Y,0,x) = \phi_r(t,Y,x)$.  One verifies directly (using that, along the faces, $\phi_r$ restricts to $\phi_{r-1}$ and coordinates restrict to coordinates) that $\Phi_r$ restricts to $\Phi_{r-1}$ along the faces.

Having established that $\Phi_\bullet$ is a homotopy relating $\phi_\bullet$ to the identity, we proceed to check the first property, namely the quasi-finiteness of $\Phi_n^{-1}(p^{-1}(W))$ over $\Delta^n \times (\A \setminus 1) \times \bb{A}^d_0$.  As in the proof of Theorem $\ref{projective suslin}$, we calculate the leading $x$-term of $\Phi_r^*(g_j)$ along the face defined by $t_{r+1}= \cdots = t_n =0$.  We find that $\lf_x (\Phi_r^*(g_j |_{t_{r+1} = \cdots = t_n = 0}))$ is equal to
$${((1-t)(t_0 \cdots t_r))}^{\deg_{t,y}g_j} \tilde{g_j}(a_{r, 01\cdots r}, a_{r, 102 \cdots r}, \ldots , a_{r, r01 \cdots r-1}, p_{r,1}, \ldots p_{r,d}, x).$$
Therefore the fibers are quasi-finite over $(\Delta^r \setminus V(t_0 \cdots t_r)) \times (\A \setminus 1) \times \bb{A}^d_0$.  Over $V(t_0 \cdots t_r) \times (\A \setminus 1) \times \bb{A}^d_0$ we apply induction, hence the property (1) holds.

The support $p^{-1}(W)$ belongs to $S_n^{(q),e,f}(\A \times \bb{P}^d)$ since our support conditions are compatible with flat pullback.  Viewing $\Phi_n$ as determined by a point $(\underline{a}, \underline{p})$ in an affine space, and noticing that $(\underline{0}, \underline{0})$ recovers $p^{-1}(W)$ and hence satisfies (2), we conclude a general choice of $\Phi_n$ satisfies (2) by the openness property $\ref{elem support properties}(\ref{elem support properties 4})$ of the conditions $S^{(q)e,f}(\bb{P}^d)$.  For (3), we notice that pulling back by $f_{1_n}$ gives the original support $W$.  Every other embedding $f_j: \Delta^n \to \Delta^n \times \A$ has smaller incidence with the non-quasi-finite locus of $\Phi_n^{-1}(p^{-1}(W))$, hence we obtain (3).  \end{proof}

\subsection{Extension to smooth projective varieties}

\begin{lemma} \label{hyperplane lowers filtration general} Let $X$ be a smooth equidimensional $k$-scheme that is projective and of dimension $d$, and let $\{ Z^n_i \} \subset S_n^{(q),e,f}(X)$ be a finite subfamily of supports.  Then there exists a very ample divisor $H \into X$ and supports $\{ {Z^n_i}' \} \subset S_n^{(q)}(X)$ satisfying:
\begin{itemize}
\item $Z^n_i \cap {Z^n_i}' \supseteq Z^n_i \cap H$; and
\item ${Z^n_i}'$ belongs to $S_n^{(q),e,f+1}(X)$, if $f \leq d+n -1$; or to $S_n^{(q),e-1,0}(X)$, if $f=d+n$.
\end{itemize}
If $(H ,\{ {Z^n_i}' \})$ satisfies the above properties, then so does $(H_1 ,\{ {Z^n_i}' \})$ for $H_1$ a general element of any linear system to which $H$ belongs.
\end{lemma}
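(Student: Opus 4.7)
The plan is to adapt Corollary~\ref{hyperplane lowers filtration} (which handles $X = \bb{P}^d$) to a general smooth projective $X$ of dimension $d$. I will reduce to a single support $Z = Z_i^n$ for notational simplicity. The strategy has two parts: first choose a very ample divisor $H$ so that $Z \cap (\Delta^n \times H \times \bb{A}^q)$ drops one level in the filtration~\ref{support tower}, and then extend this non-dominant restriction to a dominant codimension-$q$ cycle $Z' \into \Delta^n \times X \times \bb{A}^q$ quasi-finite outside $H$.

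For the first step, I would pick any very ample linear system $|L|$ on $X$ and a generic $H \in |L|$. When $f \leq d + n - 1$: basepoint-freeness of $|L|$ ensures no component of $pr_{12}((Z^m)_e)$ is contained in every $\Delta^m \times H$, so for each face $\Delta^m \into \Delta^n$ simultaneously, $pr_{12}((Z^m)_e) \cap (\Delta^m \times H)$ has strictly larger codimension in $\Delta^m \times X$ than $pr_{12}((Z^m)_e)$ does. Hence $Z \cap (\Delta^n \times H \times \bb{A}^q)$ has its $e$-dimensional fiber image of codimension $\geq f + 1$ on every face. When $f = d + n$: $pr_2(Z_e) \into X$ is zero-dimensional, and a generic $H$ avoids it, so no $e$-dimensional fibers survive.

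The second step is the analogue of Lemma~\ref{extend from divisor} with $X$ in place of $\bb{P}^d$, using global sections of powers of $L$ in place of homogeneous coordinates. For $k \gg 0$ the twisted ideal sheaf $\cal{I}_Z \otimes pr_X^* L^{\otimes k}$ on $\Delta^n \times X \times \bb{A}^q$ is globally generated; I pick generators $f_1, \ldots, f_r$ with $r \geq q$ and let $h \in H^0(X, L)$ cut out $H$. Set $Z' := V(f'_1, \ldots, f'_r)$ with $f'_j := f_j + h^k p_j(\underline{z})$, where the $p_j$ are sufficiently general polynomials in the $\bb{A}^q$-coordinates of large degree. Since $f'_j|_H = f_j|_H$, I get $Z' \cap H = Z \cap H$ set-theoretically, and hence $Z \cap Z' \supseteq Z \cap H$. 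For quasi-finiteness of $Z'$ over $\Delta^n \times (X \setminus H)$, I would compactify $\bb{A}^q \subset \bb{P}^q$ and embed $X \into \bb{P}^N$ via $|L^{\otimes k}|$; the leading $\underline{z}$-term of each $f'_j$ is $h^k \cdot \mathrm{lead}(p_j)$, and for generic $p_j$ the $r \geq q$ leading equations have no common zero on $\bb{P}^{q-1}_\infty \times (X \setminus H)$. This is exactly the argument of Lemma~\ref{extend from divisor} transplanted to $X$, and a final dimension count shows $Z' \in S_n^{(q)}(X)$.

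The last clause about $H_1$ general in any linear system $|M|$ containing $H$ should follow because the codimension drop in the first step is an open condition on $|M|$, and once the $p_j$ in the construction are chosen generically with respect to $|M|$, the containment $Z \cap Z' \supseteq Z \cap H_1$ holds for $H_1$ in a dense open subset of $|M|$ by a semicontinuity argument applied to the family of restrictions $Z \cap H'$ as $H'$ varies in $|M|$. \textbf{The hard part} will be the extension step: Suslin's Lemma~\ref{extend from divisor} crucially exploits homogeneous coordinates on $\bb{P}^d$, so transferring the leading-form analysis to an abstract $X$ requires a careful choice of embedding via $|L^{\otimes k}|$ and a verification that the generic choice of $p_j$ produces finite fibers uniformly over $X \setminus H$, together with the check that $Z'$ continues to meet every face in codimension $q$.
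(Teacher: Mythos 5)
Your route is genuinely different from the paper's, and for the two bulleted properties it is workable. The paper disposes of this lemma in a few lines: choose a finite flat morphism $f : X \to \bb{P}^d$ (Noether normalization), apply Corollary \ref{hyperplane lowers filtration} to $W = f(Z^n_i)$ to get a hyperplane $H_{\bb{P}}$ and a support $W'$ agreeing with $W$ along $H_{\bb{P}}$, and then pull $H_{\bb{P}}$ and $W'$ back to $X$; the finite--flat compatibility of the conditions $S^{(q),e,f}$ (Properties \ref{elem support properties}) does all the bookkeeping. You instead redo the $\bb{P}^d$ argument directly on $X$: a generic very ample $H$ drops the filtration level of $Z \cap H$ exactly as in the first half of Corollary \ref{hyperplane lowers filtration}, and then you transplant Lemma \ref{extend from divisor}, replacing homogeneous coordinates by sections of $L^{\otimes k}$, setting $f'_j = f_j + h^k p_j$ with the $p_j$ of large degree in the $\bb{A}^q$-variables. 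That does work: $\cal{I}_Z \otimes pr_X^*L^{\otimes k}$ is globally generated for $k \gg 0$ since $\Delta^n \times X \times \bb{A}^q$ is projective over an affine base, the leading-form computation gives quasi-finiteness over $\Delta^n \times (X \setminus H)$ uniformly, and the face/fiber-dimension checks go through as in Corollary \ref{hyperplane lowers filtration}. What the comparison buys: your argument is self-contained but forces you to re-prove the Suslin-style extension on an abstract $X$ (the step you yourself flag as the hard part), whereas the paper's projection trick reuses the $\bb{P}^d$ results verbatim and is the pattern used throughout Section 4 (compare Proposition \ref{projective birationality lemma}).

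One concrete caveat: your justification of the final clause does not work as written. With your construction, on $Z$ all the $f_j$ vanish, so $Z \cap Z' = (Z \cap H) \cup (Z \cap V(p_1, \ldots, p_r))$, a fixed closed set which is essentially $Z \cap H$; for a general member $H_1 \neq H$ of a positive-dimensional linear system, $Z \cap H_1$ is a moving divisor on $Z$ and cannot be contained in that fixed set once $\dim Z \geq 1$, so no semicontinuity argument with $\{ {Z^n_i}' \}$ held fixed can give $Z \cap H_1 \subseteq Z \cap Z'$. (The paper's own proof is silent on this clause as well; the way it is actually used is that a general $H_1$ in the linear system again admits suitable supports, i.e., one re-runs the construction for $H_1$, which your openness remark for the first step together with Properties \ref{elem support properties}(\ref{elem support properties 4}) does provide.) You should either restate the clause in that form or drop the claim that the same $Z'$ serves for $H_1$.
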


\begin{proof} For simplicity of notation we assume $Z = Z^n_i$ is a single subvariety.  There is a finite flat morphism $f : X \to \bb{P}^d$, so we may apply Corollary $\ref{hyperplane lowers filtration}$ to the support $W=f(Z)$.  Thus we find a hyperplane $H \into \bb{P}^d$ and a support $W'$ such that $f(Z) \cap H = W' \cap H$, and such that $W'$ belongs to a smaller support condition.  The pullbacks of $H$ and $W'$ to $X$ satisfy the conclusions of the lemma. \end{proof} 

For convenience of reference we record the following consequence of Zariski excision.

\begin{lemma} \label{easy Zariski} Let $X \subset \oline{X}$ be an open immersion of smooth $k$-schemes, with $X_\infty := \oline{X} \setminus X$.  Let $Z_1, Z_2 \in S^{(q)}(\oline{X})$ be supports satisfying $Z_1 \into Z_2$.  Let $Z_i^o$ denote the induced support in $S^{(q)}(X)$, and let $Z_i^\infty$ denote the intersection $Z_i \cap (\Delta^\bullet \times X_\infty \times \bb{A}^q)$.  Suppose $Z_3 \in S^{(q)}(\oline{X})$ is a support such that $Z_3^\infty \supseteq Z_2^\infty$.  Suppose $E \in {\bf{Spt}}(k)$ satisfies Zariski excision.  Then the canonical map
$$E^{Z_2 \setminus Z_1 \cup (Z_2 \cap Z_3)}(\oline{X} \setminus Z_1 \cup Z_3) \to E^{Z_2^o \setminus Z_1^o \cup {(Z_2 \cap Z_3)}^o}(X \setminus {(Z_1 \cup Z_3)}^o)$$
is a weak equivalence. \end{lemma}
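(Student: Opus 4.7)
The plan is to reduce the assertion to the Zariski excision axiom applied in each simplicial degree, and then pass to the hocolim over supports and to the total spectrum. Abbreviate
\[
A := Z_2 \setminus (Z_1 \cup (Z_2 \cap Z_3)), \quad U := \Delta^\bullet \times \oline{X} \times \bb{A}^q \setminus (Z_1 \cup Z_3), \quad V := U \cap (\Delta^\bullet \times X \times \bb{A}^q).
\]
Then $A$ is closed in $U$, and the map in the statement is assembled from the morphisms $E^{A_n}(U_n) \to E^{A_n \cap V_n}(V_n)$ induced by the open immersions $V_n \subset U_n$ in each simplicial degree.

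The only geometric content is that $A \subset V$, equivalently that $A$ is disjoint from $\Delta^\bullet \times X_\infty \times \bb{A}^q$. Intersecting and using the notation $Z^\infty$ from the statement,
\[
A \cap (\Delta^\bullet \times X_\infty \times \bb{A}^q) \;=\; Z_2^\infty \setminus \bigl(Z_1^\infty \cup (Z_2^\infty \cap Z_3^\infty)\bigr),
\]
and the hypothesis $Z_3^\infty \supseteq Z_2^\infty$ forces $Z_2^\infty \cap Z_3^\infty = Z_2^\infty$, so the right-hand side is empty. This is the sole place the hypothesis on $Z_3^\infty$ enters the argument.

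With $A \subset V$ in hand, the Zariski excision axiom recorded in Remarks $\ref{coh remarks}$ supplies $E^{A_n}(U_n) \xrightarrow{\sim} E^{A_n}(V_n)$ for each choice of $(Z_1, Z_2, Z_3)$ and each $n$; taking the hocolim over support conditions and realizing the resulting simplicial spectrum preserves these weak equivalences and yields the statement. I do not anticipate any real obstacle: the lemma is essentially a repackaging of Zariski excision to fit the projective bookkeeping, and the only non-formal step is the elementary set-theoretic check above.
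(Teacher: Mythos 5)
Your proposal is correct and follows essentially the same route as the paper: the hypothesis $Z_3^\infty \supseteq Z_2^\infty$ forces the support $Z_2 \setminus Z_1 \cup (Z_2 \cap Z_3)$ to be disjoint from $\Delta^\bullet \times X_\infty \times \bb{A}^q$, so the closed immersion factors through the smaller open subscheme and Zariski excision applies degreewise. Your write-up just makes explicit the set-theoretic check and the passage through hocolims and realization that the paper leaves implicit.
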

\begin{proof} We have ${Z_2 \setminus Z_1 \cup (Z_2 \cap Z_3)} = Z_2^o \setminus Z_1^o \cup {(Z_2 \cap Z_3)}^o$.  The closed immersion $Z_2 \setminus Z_1 \cup (Z_2 \cap Z_3) \into \Delta^\bullet \times \oline{X} \times \bb{A}^q \setminus Z_1 \cup Z_3$ factors through $\Delta^\bullet \times X \times \bb{A}^q \setminus Z_1^o \cup Z_3^o$. \end{proof}

\begin{proposition} \label{projective space we} Let $S_1, S_2$ be successive support conditions in the tower $\ref{support tower}$.  Suppose $E \in {\bf{Spt}}(k)$ satisfies Zariski excision. Then the canonical morphism $E^{S_1}(\bb{P}^d) \to E^{S_2}(\bb{P}^d)$ is a weak equivalence. \end{proposition}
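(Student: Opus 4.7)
The plan is to adapt the tower-collapse argument for $\bb{A}^d$ (the corollary immediately following Corollary \ref{nullhomotopy pairs}) to the projective setting, substituting the projective moving lemmas (Theorems \ref{projective suslin} and \ref{projective suslin homotopy}) for the affine Propositions \ref{FS 8.2} and \ref{FS 8.4}. The cofiber of the canonical morphism $E^{S_1}(\bb{P}^d) \to E^{S_2}(\bb{P}^d)$ is a homotopy colimit of cofibers of the form $E^{Z_2 \setminus Z_1 \cap Z_2}(\bb{P}^d \setminus Z_1)$, as $Z_i$ varies over finite subfamilies in $S_i(\bb{P}^d)$. Hence it suffices to show every term in this colimit admits a nullhomotopic map into some further term, from which contractibility of the cofiber follows.

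To produce such a nullhomotopy for a pair $Z_1 \subset Z_2$ of finite subfamilies with $Z_i \subset S_{i, \leq N}(\bb{P}^d)$, I would first invoke Theorem \ref{projective suslin} applied to $Z_2$ (which contains $Z_1$) to obtain a pseudo-endomorphism $\phi_\bullet$ of $\Delta^\bullet \times \bb{P}^d \times \bb{A}^q$ satisfying $\phi_n^{-1}(Z_2^n) \in S_1$, and then Theorem \ref{projective suslin homotopy} to obtain a homotopy $\Phi_\bullet$ between $\phi_\bullet$ and the identity. Property (3) of Theorem \ref{projective suslin homotopy} guarantees that the generated support condition $F^*Z$, defined as in Section \ref{controlled moving} by the subschemes ${(\Phi_n \circ (f_j \times 1 \times 1))}^{-1}(p^{-1}(Z))$ for $j \in I_n$, remains in the same level of the tower as $Z$; in particular $F^*Z_1 \subset S_1$ and $F^*Z_2 \subset S_2$. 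The formal homotopy arguments of Propositions \ref{FS 8.5} and \ref{no new supports} and Corollary \ref{nullhomotopy pairs} then transfer verbatim, with $\bb{A}^d$ replaced by $\bb{P}^d$, to yield a nullhomotopy of the composition
\begin{equation*}
E^{Z_2 \setminus Z_1}(\bb{P}^d \setminus Z_1) \to E^{F^*Z_2 \setminus F^*Z_1 \cup \phi^{-1}(Z_2)}(\bb{P}^d \setminus F^*Z_1 \cup \phi^{-1}(Z_2)).
\end{equation*}
Since the excised support $F^*Z_1 \cup \phi^{-1}(Z_2)$ lies in $S_1$ and the active support $F^*Z_2$ lies in $S_2$, the target is a further term in the defining colimit, completing the argument.

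The main obstacle is verifying that the formal homotopy arguments of Propositions \ref{FS 8.5} and \ref{no new supports} actually transfer to the projective setting. This is essentially automatic once the projective moving lemmas are in hand; the essential new input compared with the affine situation is property (3) of Theorem \ref{projective suslin homotopy}, which ensures that all the support conditions encountered in the homotopy remain globally in the correct level of the tower on $\bb{P}^d$, and not merely on the finite part $\bb{A}^d_0$ where the quasi-finiteness of $\phi^{-1}(Z_2)$ is achieved. Zariski excision enters as in Lemma \ref{easy Zariski} to identify the relative spectra on open subschemes obtained after removing the $S_1$-supports, so that the nullhomotopy assembled above truly lands in a term of the colimit computing the cofiber.
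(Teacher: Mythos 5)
Your overall strategy (collapse the tower by exhibiting, for each pair $Z_1 \into Z_2$ of finite subfamilies, a nullhomotopic transition map to a further term of the colimit computing the cofiber) is the paper's strategy, and your identification of Theorems \ref{projective suslin} and \ref{projective suslin homotopy} as the geometric inputs is correct. But the claim that the formal arguments of Propositions \ref{FS 8.5}, \ref{no new supports} and Corollary \ref{nullhomotopy pairs} ``transfer verbatim, with $\bb{A}^d$ replaced by $\bb{P}^d$'' skips over exactly the step that constitutes the new content of the projective case, and your closing sentence misidentifies where Zariski excision is used. The moving morphism $\phi_\bullet$ of Theorem \ref{projective suslin} preserves the hyperplane at infinity and achieves quasi-finiteness only over $\Delta^n \times \bb{A}^d_0$; it does nothing to the part of $Z_2$ lying over $H = \bb{P}^{d-1}_\infty$. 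The paper therefore does \emph{not} run the homotopy argument globally on $\bb{P}^d$. It first invokes Corollary \ref{hyperplane lowers filtration} to produce a hyperplane $H$ and an auxiliary support $Z_2' \subset S_1(\bb{P}^d)$ agreeing with $Z_2$ along $H$, excises $Z_2'$ (legitimate, since $Z_2'$ lies in the smaller condition), and then uses Lemma \ref{easy Zariski} --- this is the one and only place the Zariski excision hypothesis enters --- to identify the resulting relative spectra on $\bb{P}^d$ with relative spectra on $\bb{A}^d = \bb{P}^d \setminus H$; the hypothesis $Z_3^\infty \supseteq Z_2^\infty$ of that lemma is satisfied precisely because $Z_2$ and $Z_2'$ agree along $H$. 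The affine nullhomotopy of Section \ref{controlled moving} is then applied on $\bb{A}^d$, where $\phi^{-1}(Z_2^o)$ really is quasi-finite, and transported back; finally one checks that the \emph{closures} $\oline{F^*Z_1^o}$, $\oline{\phi^{-1}(Z_2^o)}$, $\oline{F^*{Z_2'}^o}$, $\oline{F^*Z_2^o}$ lie in the correct levels of the tower.

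In your version, the locus of $Z_2$ at infinity is never disposed of, and as written your argument would never use the Zariski excision hypothesis at all --- a sign that the mechanism is missing. If you want to argue directly on $\bb{P}^d$ without restricting to the affine chart, you would have to justify that the globally defined supports ${(\Phi_n \circ (f_j \times 1 \times 1))}^{-1}(p^{-1}(Z_i))$ and $\phi_n^{-1}(Z_2)$ on $\Delta^n \times \bb{P}^d \times \bb{A}^q$ (including any components supported over $H$) lie in the asserted levels of the tower, and that the homotopy $\beta$ of Proposition \ref{no new supports} can be assembled there; the paper's detour through $\bb{A}^d_0$ and Lemma \ref{easy Zariski} is designed precisely so that only the closures of the affine parts need to be controlled, with everything over $H$ absorbed into the excised support $Z_2'$. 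You should add the choice of $H$ and $Z_2'$ via Corollary \ref{hyperplane lowers filtration}, the excision step via Lemma \ref{easy Zariski}, and the final verification that the closures of the moved supports lie in $S_1(\bb{P}^d)$ and $S_2(\bb{P}^d)$ respectively.
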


\begin{proof} The proof is modeled on Steps 2 and 3 of the proof of Proposition $\ref{affine tower collapse}$, with the hyperplane playing the role of the exceptional locus.  There is no need to treat induced supports separately, so as in Step 2 we suppose given $Z_1 \subset S_1(\bb{P}^d), Z_2 \subset S_2(\bb{P}^d)$, and assume $Z_1 \into Z_2$.  We will find $Y_1 \subset S_1(\bb{P}^d), Y_2 \subset S_2(\bb{P}^d)$ such that the induced map $E^{Z_2 \setminus Z_1 }(\bb{P}^d \setminus Z_1) \to E^{Z_2 \cup Y_2 \setminus Z_1 \cup Y_1}(\bb{P}^d \setminus Z_1 \cup Y_1)$ is nullhomotopic.  

We continue with the analogue of Step 2.  By Corollary $\ref{hyperplane lowers filtration}$, we may choose a hyperplane $H \into \bb{P}^d$ and a support $Z_2' \subset S_1(\bb{P}^d)$ which agrees with $Z_2$ along $H$.  Then we may apply Theorem $\ref{projective suslin}$ with $W = Z_2$ to obtain a pseudo-endomorphism $\phi_\bullet$ of the $n$th truncation of $\Delta^\bullet \times \bb{P}^d \times \bb{A}^q$ such that $\phi_n^{-1}(Z_2) \subset S_1(\bb{P}^d)$.  We have the following commutative diagram, in which we use the notations $Z_i^o := Z_i \setminus Z_i \cap H, \bb{A}^d = \bb{P}^d \setminus H, Z_3 := \oline{F^*Z_1^o} \cup \oline{\phi^{-1}(Z_2^o)}\cup \oline{F^*{Z_2'}^o}, Z_3^o := F^*Z_1^o \cup \phi^{-1}(Z_2^o) \cup F^* Z_2'$.
$$\xym{
E^{Z_2 \setminus Z_1 \cup (Z_2 \cap Z_2')}(\bb{P}^d \setminus Z_1 \cup Z_2' ) \ar[r] \ar[d] & E^{\oline{F^*Z_2^o} \setminus \oline{F^*Z_2^o} \cap Z_3} (\bb{P}^d \setminus Z_3 ) \ar[d] \\
E^{Z_2^o \setminus Z_1^o \cup(Z_2^o \cap {Z_2'}^o)}( \bb{A}^d \setminus {(Z_1 \cup Z_2')}^o) \ar[r] & E^{F^*Z_2^o \setminus F^*Z_2^o \cap Z_3^o} (\bb{A}^d \setminus Z_3^o ) \\}$$

The homotopy constructed in Theorem $\ref{projective suslin homotopy}$ gives back the homotopy used in Proposition $\ref{no new supports}$ and Corollary $\ref{nullhomotopy pairs}$.  Thus, as in Step 3, the bottom arrow is a nullhomotopy.  The vertical arrows are weak equivalences by Lemma $\ref{easy Zariski}$.  Therefore the arrow in the top row is a nullhomotopy, hence so is $E^{Z_2 \setminus Z_1}(\bb{P}^d \setminus Z_1) \to E^{\oline{F^*Z_2^o} \setminus \oline{F^*Z_2^o} \cap Z_3}(\bb{P}^d \setminus Z_3)$.  Thus we just need to show this map appears in the colimit which is the cofiber of the map $E^{S_1}(\bb{P}^d) \to E^{S_2}(\bb{P}^d)$, i.e., we need $Z_3 = \oline{F^*Z_1^o} \cup \oline{\phi^{-1}(Z_2^o)} \cup \oline{F^*{Z_2'}^o} \subset S_1(\bb{P}^d)$ and $\oline{F^*Z_2^o} \subset S_2(\bb{P}^d)$.

We verify this directly, with no need for further shrinking as in Step 4.  Now we recall that $F^*(-)$ is the support condition generated by ${(f_j \times 1 \times 1)}^{-1}(\Phi^{-1}(p^{-1}(-)))$.  Since $Z_2' \subset S_1(\bb{P}^d)$, we obtain that $\oline{F^*{Z_2'}^o} \subset S_1(\bb{P}^d)$ by Theorem $\ref{projective suslin homotopy}$(3).  Similarly, $Z_1 \subset S_1(\bb{P}^d)$ implies $\oline{F^*Z_1^o} \subset S_1(\bb{P}^d)$; and $\oline{F^*Z_2^o} \subset S_2(\bb{P}^d)$.  The support $\oline{\phi^{-1}(Z_2^o)}$ belongs to $S_1(\bb{P}^d)$ by Theorem $\ref{projective suslin}$(3).  This completes the proof.  \end{proof}

\begin{corollary} If $E \in {\bf{Spt}}(k)$ satisfies Zariski excision, then the map $\alpha_{\bb{P}^d} : E^Q(\bb{P}^d) \to E^{(q)}(\bb{P}^d)$ is a weak equivalence.  \end{corollary}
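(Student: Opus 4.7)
The plan is to deduce the corollary from Proposition \ref{projective space we} by iterating through the refined tower of support conditions between $Q(\bb{P}^d)$ and $S^{(q)}(\bb{P}^d)$, using the truncation argument sketched in the introductory discussion of the cohomology theories section. More precisely, for each fixed truncation bound $N$, the interleaved tower
$$\cdots \to E^{(q),e-1,0}(\bb{P}^d)_{\leq N} \to E^{(q),e,d+N}(\bb{P}^d)_{\leq N} \to \cdots \to E^{(q),e,0}(\bb{P}^d)_{\leq N} \to \cdots$$
has only finitely many steps (since $e$ ranges over $\{0, 1, \ldots, q\}$ and $f$ over $\{0, 1, \ldots, d+N\}$). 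First I would invoke Proposition \ref{projective space we} at each step of this finite composition to conclude that each successive map is a weak equivalence at truncation level $N$; hence the total composite $E^Q(\bb{P}^d)_{\leq N} \to E^{(q)}(\bb{P}^d)_{\leq N}$ is a weak equivalence.

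Then I would pass from the truncated to the full simplicial spectra: since the $r$th homotopy group of $|n \mapsto E^{S_n}(\bb{P}^d)|$ depends only on a truncation in simplicial degree depending on $r$ (as noted in the paper), the weak equivalences at each truncation level assemble into a weak equivalence $\alpha_{\bb{P}^d} : E^Q(\bb{P}^d) \to E^{(q)}(\bb{P}^d)$. The only subtlety is to check that Proposition \ref{projective space we} legitimately covers the transitions $E^{(q),e-1,0}(\bb{P}^d)_{\leq N} \to E^{(q),e,d+N}(\bb{P}^d)_{\leq N}$ between distinct levels of the outer tower \ref{dim support tower}, not merely the intra-level steps of \ref{support tower}; this is granted by the fact that Corollary \ref{hyperplane lowers filtration} and Theorem \ref{projective suslin}(3) already package both kinds of transition (the case $f \leq d+n-1$ increasing $f$, and the case $f = d+n$ decreasing $e$ and resetting $f$). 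The hard part has already been done in Proposition \ref{projective space we}; no further geometric input is needed here, and the corollary is essentially a bookkeeping consequence.
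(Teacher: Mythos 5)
Your proposal is correct and is exactly the argument the paper intends: the corollary is stated without proof because it follows from Proposition \ref{projective space we} via the truncation-and-tower bookkeeping already laid out in Section 2 (including the observation that the geometric inputs handle both the $f$-increasing steps and the $e$-decreasing transition at $f = d+n$). No discrepancy with the paper's approach.
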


We isolate the key fact used in the proof of Proposition $\ref{projective space we}$.

\begin{proposition} \label{projective space we key} Let the notation and hypotheses be as in Theorem $\ref{projective suslin}$ and the proof of Proposition $\ref{projective space we}$.  Then the canonical morphism
$$E^{Z_2 \setminus Z_1}(\bb{P}^d \setminus Z_1) \to E^{\oline{F^*Z_2^o} \setminus \oline{F^*Z_2^o} \cap Z_3} (\bb{P}^d \setminus Z_3)$$ is nullhomotopic. \end{proposition}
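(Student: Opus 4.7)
The plan is to reconstruct the commutative square implicit in the proof of Proposition $\ref{projective space we}$ and read off its top arrow as the desired null-homotopy. First I would use Corollary $\ref{hyperplane lowers filtration}$ to produce a hyperplane $H \into \bb{P}^d$ and a support $Z_2' \in S_1(\bb{P}^d)$ with $Z_2 \cap H = Z_2' \cap H$, choosing homogeneous coordinates so that $H$ is the hyperplane at infinity $V(Y_0)$. Then Theorem $\ref{projective suslin}$ applied to the union $Z_1 \cup Z_2 \cup Z_2'$ supplies a pseudo-endomorphism $\phi_\bullet$ of $\Delta^\bullet \times \bb{P}^d \times \bb{A}^q$ fixing $H$ pointwise and carrying each $Z_i^o := Z_i \cap (\Delta^\bullet \times \bb{A}^d \times \bb{A}^q)$ to a smaller filtration level, and Theorem $\ref{projective suslin homotopy}$ supplies a compatible homotopy $\Phi_\bullet$ enjoying the three listed properties.

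Next I would use this $\Phi_\bullet$ to form the $F^*$-constructions on $\bb{A}^d$ of Section $\ref{controlled moving}$ and assemble the commutative square
$$
\xym{
E^{Z_2 \setminus Z_1 \cup (Z_2 \cap Z_2')}(\bb{P}^d \setminus Z_1 \cup Z_2') \ar[r] \ar[d] & E^{\oline{F^*Z_2^o} \setminus \oline{F^*Z_2^o} \cap Z_3}(\bb{P}^d \setminus Z_3) \ar[d] \\
E^{Z_2^o \setminus Z_1^o \cup (Z_2^o \cap {Z_2'}^o)}(\bb{A}^d \setminus (Z_1 \cup Z_2')^o) \ar[r] & E^{F^*Z_2^o \setminus F^*Z_2^o \cap Z_3^o}(\bb{A}^d \setminus Z_3^o) \\
}
$$
whose vertical arrows are restriction to $\bb{A}^d = \bb{P}^d \setminus H$. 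The idea is that the verticals are weak equivalences by Lemma $\ref{easy Zariski}$ and that the bottom arrow is a null-homotopy by Corollary $\ref{nullhomotopy pairs}$ (applied with $Z = Z_2^o$ and $Z' = Z_1^o \cup (Z_2^o \cap {Z_2'}^o)$, followed by the canonical enlargement of the excised locus to include $F^*{Z_2'}^o$). Once the top arrow is null-homotopic, I would precompose with the canonical restriction map $E^{Z_2 \setminus Z_1}(\bb{P}^d \setminus Z_1) \to E^{Z_2 \setminus Z_1 \cup (Z_2 \cap Z_2')}(\bb{P}^d \setminus Z_1 \cup Z_2')$, obtained by shrinking from $\bb{P}^d \setminus Z_1$ to $\bb{P}^d \setminus (Z_1 \cup Z_2')$ while enlarging the support by $Z_2 \cap Z_2'$; this yields the morphism of the statement, which is therefore null-homotopic.

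The main obstacle I anticipate is the infinity-matching needed to apply Lemma $\ref{easy Zariski}$ to the right-hand vertical, since the closure $\oline{F^*Z_2^o}$ can acquire extraneous components along $H$ that must be absorbed into $Z_3 = \oline{F^*Z_1^o} \cup \oline{\phi^{-1}(Z_2^o)} \cup \oline{F^*{Z_2'}^o}$. For this I would exploit that the explicit formulas in the proofs of Theorems $\ref{projective suslin}$ and $\ref{projective suslin homotopy}$ fix $V(Y_0) = H$ pointwise, so both $\phi_\bullet$ and $\Phi_\bullet$ restrict to the identity on $H$; combined with $Z_2 \cap H = Z_2' \cap H$, this forces $\oline{F^*Z_2^o}$ and $\oline{F^*{Z_2'}^o}$ to coincide set-theoretically along $H$, giving the containment $(\oline{F^*Z_2^o})^\infty \subseteq \oline{F^*{Z_2'}^o}^\infty \subseteq Z_3^\infty$ required by the lemma. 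The left-hand vertical is immediate from $Z_2 \cap H = Z_2' \cap H$, and the bottom null-homotopy is essentially the affine argument already digested in Section $\ref{controlled moving}$, so once this infinity compatibility is established the argument closes.
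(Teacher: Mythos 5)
Your proposal reconstructs exactly the argument the paper gives: the same commutative square, with the bottom arrow nullhomotopic by the affine theory of Section $\ref{controlled moving}$ (Proposition $\ref{no new supports}$/Corollary $\ref{nullhomotopy pairs}$, fed by Theorem $\ref{projective suslin homotopy}$) and the vertical arrows weak equivalences by Lemma $\ref{easy Zariski}$ — indeed the paper states Proposition $\ref{projective space we key}$ precisely as an isolation of this step from the proof of Proposition $\ref{projective space we}$. One small correction: the moving morphisms of Theorems $\ref{projective suslin}$ and $\ref{projective suslin homotopy}$ preserve the hyperplane $V(Y_0)$ but do not fix it pointwise (the simplex coordinates $t_i$ are perturbed for $r \geq 1$); this does not harm your infinity-matching argument, since preservation of $H$ together with $Z_2 \cap H = Z_2' \cap H$ already yields the containment you need.
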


We will need the analogue of Proposition $\ref{birationality lemma}$ for a smooth projective variety.

\begin{proposition} \label{projective birationality lemma} Let $X$ be a smooth equidimensional $k$-scheme that is projective and of dimension $d$.  Let $\{ Z^n_i \} \subset S^{(q),e,f}_n(X)$ be a finite subfamily of supports, and suppose none are induced.  Then there exists a finite morphism $f: X \to \bb{P}^d$ and supports $\{ {Z^n_i}' \} \subset S^{(q)}_n(X)$ satisfying:
\begin{enumerate}
\item the restriction of $1 \times f \times 1$ to $Z^n_i$ is birational onto its image;
\item $Z^n_i \cap {Z^n_i}'$ contains the exceptional locus $\Exc (1 \times f \times 1 |_{Z^n_i})$; and  
\item ${Z^n_i}' \in S_n^{(q),e,f+1}(X)$, if $f \leq d+n-1$; or ${Z^n_i}' \in S_n^{(q),e-1,0}(X)$, if $f = d+n$.
\end{enumerate}
\end{proposition}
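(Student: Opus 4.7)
The plan is to follow the proof of Proposition \ref{birationality lemma} essentially verbatim, replacing $\bb{A}^d$ by $\bb{P}^d$ throughout and replacing the appeal to Suslin's moving lemma relative to a divisor by the explicit extension provided by Lemma \ref{extend from divisor}. For notational simplicity I write $Z = Z^n_i$ and $Z' = {Z^n_i}'$, and (as in the affine case) abuse notation by writing $f$ for both the finite morphism $X \to \bb{P}^d$ and the induced morphism $1 \times f \times 1$. First I would construct $f$: embed $X \into \bb{P}^N$ and project linearly from a sufficiently general codimension $d+1$ subspace $L \subset \bb{P}^N$ disjoint from $X$. The resulting morphism is finite, and flat by miracle flatness. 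Because $k$ is infinite, a sufficiently generic choice of $L$ also makes each $f|_Z$ birational onto its image, by \cite[Lemma~5.3.3]{MLChow}, establishing (1).

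Next I would analyze the exceptional locus $E := \Exc(f|_Z)$, an ample divisor on $Z$ satisfying $E^m = \Exc(f|_{Z^m})$ on every face $\Delta^m \into \Delta^n$ (the superscript denoting intersection with the face). For sufficiently generic $f$, the intersection $E^m \cap {(Z^m)}_e$ is proper on every face. The dimension-counting in the proof of Proposition \ref{birationality lemma} then applies unchanged: in the case $f \leq d+n-1$, the image $pr_{12}({(E^m)}_e)$ has strictly larger codimension in $\Delta^m \times X$ than $pr_{12}({(Z^m)}_e)$; in the case $f = d+n$, $E_e = \emptyset$. Thus $E$ itself satisfies the conclusion of (3), except that $E$ fails to dominate $\Delta^n \times X$.

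The remaining step, which I expect to be the main technical point, is to extend $E$ to a dominant codimension $q$ support $Z'$ without worsening the fiber-dimension behavior obtained above. Push $E$ down via $f$ to $W := f(E) \subset \Delta^n \times \bb{P}^d \times \bb{A}^q$; by construction $W$ lies over an effective divisor $D \into \Delta^n \times \bb{P}^d$ not pulled back from $\Delta^n$, and likewise on every face $\Delta^m \into \Delta^n$. Apply Lemma \ref{extend from divisor} to $(W, D)$ to produce a dominant extension $W' \in S^{(q)}_n(\bb{P}^d)$ with $W' \cap D = W$ set-theoretically and $W'$ quasi-finite over $\Delta^n \times \bb{P}^d \setminus D$. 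Set $Z' := f^{-1}(W')$. Since $W' \supseteq f(E)$, we get $Z' \supseteq f^{-1}(f(E)) \supseteq E$, so $Z \cap Z' \supseteq E$, establishing (2).

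For (3), quasi-finiteness of $W'$ off $D$ confines any fiber of $W' \to \Delta^n \times \bb{P}^d$ of dimension $\geq 1$ to lie inside $W' \cap (\Delta^n \times D \times \bb{A}^q) = W = f(E)$, and the same holds on every face. Thus the $e$-dimensional fiber locus of $W'$, and its image in $\Delta^n \times \bb{P}^d$, are bounded by the corresponding data for $f(E)$. Compatibility of the conditions $S^{(q),e,f}$ with finite flat pushforward and pullback (Properties \ref{elem support properties}(\ref{elem support properties 2})--(\ref{elem support properties 3})) transfers the codimension estimate of the second paragraph from $E$ on $X$ to $f(E)$ on $\bb{P}^d$, and then from $W'$ back to $Z' = f^{-1}(W')$ on $X$, yielding (3) on every face simultaneously.
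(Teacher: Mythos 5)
Your proposal is correct in outline but takes a genuinely different route from the paper, and the difference is exactly where the remaining risk sits. The paper does not build $f : X \to \bb{P}^d$ directly as a generic linear projection. It first chooses, via Lemma \ref{hyperplane lowers filtration general}, an ample divisor $H \into X$ so that $Z^n_i \cap H$ already lies in the smaller support condition; it then applies the affine Proposition \ref{birationality lemma} to $\{Z^n_i \setminus Z^n_i \cap H\}$ on $X \setminus H$; and it spends the bulk of the proof showing, via an incidence variety $I \into V_1 \times V_{d+1}$ of pairs (hyperplane, center of projection) attached to the linear system $|m_0H|$, that the affine morphism can be perturbed so as to extend to a finite $\oline{f} : X \to \bb{P}^d$, using $\Exc(1 \times \oline{f} \times 1|_{Z^n_i}) \subseteq \Exc(1 \times f \times 1|_{Z^n_i \setminus Z^n_i \cap H}) \cup (Z^n_i \cap H)$. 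The payoff of that detour is that every genericity input is either the affine statement (Levine's lemma and the affine exceptional-locus analysis) or base-point-freeness of $|H|$; nothing new about projective projections has to be proved. Your route gets the extension problem for free, since your $f$ is globally defined from the start, but in exchange you must re-establish the genericity statements directly for the family of linear projections $\pi_L$.

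That exchange is where your argument has a gap. The appeal to \cite[Lemma 5.3.3]{MLChow} does not literally apply: that lemma concerns finite morphisms to affine space, and the birationality of $1 \times \pi_L \times 1$ on a non-induced $Z^n_i$ for generic $L$ needs its own incidence-variety dimension count (this is, incidentally, precisely where the ``non-induced'' hypothesis enters: an induced support has $d$-dimensional fibers over its image in $\Delta^n \times \bb{A}^q$ and the count fails). Likewise the assertion that $E^m \cap {(Z^m)}_e$ is proper for generic $L$ requires showing that the ramification divisors $R_{\pi_L}$ and the double-point loci of $\pi_L|_{Z^m}$ move, as $L$ varies, in families with no fixed component meeting ${(Z^m)}_e$; this is plausible (the tangent-space and secant-line conditions are non-degenerate point by point), but it is a different statement from the affine one and is the real content of the proposition on your approach. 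The rest of your argument --- pushing $E$ forward to $W = f(E)$ over a divisor $D$, extending by Lemma \ref{extend from divisor}, observing that the positive-dimensional fibers of $W'$ coincide with those of $W$, and transporting the codimension estimates by Properties \ref{elem support properties}(\ref{elem support properties 2},\ref{elem support properties 3}) --- matches the paper's endgame and is fine.
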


\begin{proof} By Lemma $\ref{hyperplane lowers filtration general}$ we can find an ample divisor $H \into X$ such that $\{ Z^n_i \cap H \}$ satisfies the smaller support condition, except that it does not dominate $\Delta^n \times X$.  We can apply Proposition $\ref{birationality lemma}$ to the family $\{Z^n_i \setminus Z^n_i \cap H \} \subset S^{(q),e,f}_n( X \setminus H)$ to obtain a morphism $f : X \setminus H \to \bb{A}^d$ which is birational on $Z^n_i \setminus Z^n_i \cap H$ and for which the exceptional locus satisfies the smaller support condition, except that it does not dominate $\Delta^n \times (X \setminus H)$.  Now if the morphism $f$ extends to a finite morphism $\oline{f} : X \to \bb{P}^d$, then since
$$\Exc(1 \times \oline{f} \times 1 |_{Z^n_i}) \subseteq \Exc(1 \times f \times 1 |_{Z^n_i \setminus Z^n_i \cap H}) \cup (Z^n_i \cap H)$$
and the right hand side lies in the smaller support condition, the image of the exceptional locus extends to a dominant support satisfying the smaller support condition by Lemma $\ref{extend from divisor}$.  Then we can take for ${Z^n_i}'$ the pullback via $f$ of this extension.  Thus we need to perturb the situation so that $f$ extends, without disturbing the other properties.

The morphism $f : X \setminus H \to \bb{A}^d$ is determined by $d$ functions $f_1, \ldots, f_d \in \Gamma(X \setminus H, \cal{O}_{X \setminus H})$.  There exists a positive integer $m_0$ such that all of these functions extend to global sections of the line bundle $\cal{O}_X(m_0 H)$.  Thus we have a diagram:
$$\xym{X \setminus H \ar[r] \ar[d]^f & X \ar@{-->}[d] \ar[r] & \bb{P}(H^0(X,\cal{O}_X(m_0H))) \ar@{-->}[ld] \\
\bb{A}^d \ar[r] & \bb{P}(Q) \\}$$
where $Q$ is a $(d+1)$-dimensional quotient of the $k$-vector space $H^0(X,\cal{O}_X(m_0H))$, and the dashed arrows are rational maps.  Set $n+1 = \dim_k H^0(X,\cal{O}_X(m_0H))$.

Let $V_{i}$ denote the variety of $i$-codimensional subspaces of the $k$-vector space $H^0(X,\cal{O}_X(m_0H))$.  We may view $V_{d+1}$ as the variety of linear subspaces $\bb{P}^{n-d-1} \into \bb{P}(H^0(X,\cal{O}_X(m_0H)))$ of dimension $(n-d-1)$, and, by taking the subspace as the center for a linear projection, as the variety of rational maps $\bb{P}(H^0(X,\cal{O}_X(m_0H))) \dashrightarrow \bb{P}^d$.  We may view $V_1$ as the variety of hyperplanes in $\bb{P}(H^0(X,\cal{O}_X(m_0H)))$.

Now let $I \into V_{1} \times V_{d+1}$ denote locus of pairs $(h,c)$ such that $[h]$ contains $[c]$ as a subspace.  Then $I \to V_{d+1}$ is a $\bb{P}^d$-bundle.  We set $C := \ker (H^0(X, \cal{O}_X (m_0 H)) \to Q)$ and use lowercase letters $h,c$ for variables.  Then the left hand square of the diagram (the morphism $f$ and its rational extension) determines the point $(H,C) \in I$, for which $\Exc(1 \times f \times 1 |_{Z_i \setminus Z_i \cap H})$ and  $(Z_i \cap H)$ both belong to the smaller support condition.  (To be more precise, since these supports are not dominant, we should say they extend to dominant supports in the smaller support condition.)  This is an open condition: for $(h,c)$ in a Zariski neighborhood $U \subset I$ of $(H,C)$, we have that $Z_i \cap [h]$ lies in the smaller support condition, and projection away from $[c]$ induces a morphism $X \setminus [h] \to \bb{A}^d$ which is birational on $Z_i \setminus Z_i \cap [h]$ and has exceptional locus in the smaller support condition.  Furthermore there is an open dense subset $U_{d+1} \subset V_{d+1}$ corresponding to centers $c$ for which the induced map $X \to \bb{P}([q])$ is a finite morphism.  Now we just take any point in the intersection $p_2^{-1}(U_{d+1}) \cap U \subset I$, and the corresponding finite morphism satisfies the desired properties.  \end{proof}

At this point we can mimic the proof of Proposition $\ref{affine tower collapse}$.

\begin{theorem} Let $X$ be a smooth equidimensional $k$-scheme that is projective and of dimension $d$.  Suppose $E \in {\bf{Spt}}(k)$ satisfies Nisnevich excision.  Let $S_1, S_2$ be successive support conditions in the tower $\ref{support tower}$.  Then the canonical map $E^{S_1}(X) \to E^{S_2}(X)$ is a weak equivalence.
 \end{theorem}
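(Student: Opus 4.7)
The plan is to mimic the four-step proof of Proposition $\ref{affine tower collapse}$, substituting the projective ingredients developed in this section for the affine ones. As in Step 1 of that proposition, Lemma $\ref{induced supports}$ and Remark $\ref{induced decomp}$ reduce the problem to showing the cofiber of $E^{S_1 \setminus p^{-1}(S_1)}(X) \to E^{S_2 \setminus p^{-1}(S_2)}(X)$ is contractible; this cofiber is the homotopy colimit, over finite non-induced subfamilies $Z_1 \subset Z_2$ (with $Z_1 \subset S_1(X)$, $Z_2 \subset S_2(X)$) together with $T \subset p^{-1}(S_2)(X)$, of spectra of the form $E^{Z_2 \setminus Z_1 \cup (Z_2 \cap T)}(X \setminus Z_1 \cup T)$.

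Given such a family, apply Proposition $\ref{projective birationality lemma}$ to produce a finite morphism $f : X \to \bb{P}^d$ birational on each component of $Z_2$, with an auxiliary $Z_2' \subset S_1(X)$ containing the exceptional locus of $1 \times f \times 1|_{Z_2}$. The finite generically \'etale morphism excision recorded in Remarks $\ref{coh remarks}$ (valid since $E$ satisfies Nisnevich excision) then identifies $E^{Z_2 \setminus Z_2 \cap (Z_1 \cup T \cup Z_2')}(X \setminus Z_1 \cup T \cup Z_2')$ with the corresponding spectrum on $\bb{P}^d$ with supports $f(Z_2)$ and $f(Z_1 \cup (Z_2 \cap (T \cup Z_2')))$, just as in Step 2 of the affine argument. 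On $\bb{P}^d$ we invoke Proposition $\ref{projective space we key}$ with $W = f(Z_2)$ to produce a nullhomotopy $E^{W \setminus W'}(\bb{P}^d \setminus W') \to E^{\oline{F^*W^o} \setminus \oline{F^*W^o} \cap Z_3}(\bb{P}^d \setminus Z_3)$, whose source and target supports lie in the expected filtration levels by Theorem $\ref{projective suslin}$(3) and Theorem $\ref{projective suslin homotopy}$(3). Pulling back via $f$ yields a nullhomotopic map on $X$, but introduces the spurious component $Z^+ := f^{-1}(f(Z_2)) \setminus Z_2$ coming from the other sheets of the finite cover; as in Step 4 of the affine case, the key observation $Z_2 \cap Z^+ \subseteq Z_2' \subset S_1(X)$ allows an excision argument to discard $Z^+$ at the cost of enlarging supports only within $S_1(X)$ and $S_2(X)$. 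The result is that the map $E^{Z_2 \setminus Z_1 \cup (Z_2 \cap T)}(X \setminus Z_1 \cup T) \to E^{Z_2 \cup Y_2 \setminus Z_1 \cup Y_1 \cup ((Z_2 \cup Y_2) \cap T')}(X \setminus Z_1 \cup Y_1 \cup T')$ is nullhomotopic for suitable $Y_1 \subset S_1(X), Y_2 \subset S_2(X), T' \subset p^{-1}(S_2)(X)$, whence the colimit is contractible.

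The main obstacle is the careful bookkeeping, under pullback via $f$, of the projective moving data constructed in Theorems $\ref{projective suslin}$ and $\ref{projective suslin homotopy}$. One must verify that the inverse image of the hyperplane $H \subset \bb{P}^d$ of Lemma $\ref{hyperplane lowers filtration general}$ is absorbed into $Z_2'$, so that Lemma $\ref{easy Zariski}$ applies simultaneously to both sides of the finite-morphism excision to transfer the nullhomotopy from $\bb{A}^d_0 = \bb{P}^d \setminus H$ back to $\bb{P}^d$, and hence from $X \setminus f^{-1}(H)$ back to $X$. One must also check that the pulled-back supports $\oline{f^{-1} F^*W^o}$ and $\oline{f^{-1} Z_3}$ land in $S_2(X)$ and $S_1(X)$ respectively, which follows from the finite flat pullback compatibility of Properties $\ref{elem support properties}$ together with the fact that $Z \mapsto F^*Z$ and $Z \mapsto \phi^{-1}(Z)$ preserve the filtration by Remark $\ref{move preserves tower}$. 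Granted these compatibilities, Step 4 of the affine argument transfers essentially verbatim to the projective setting.
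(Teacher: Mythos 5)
Your proposal follows the same route as the paper's proof: reduce to non-induced supports, use Proposition $\ref{projective birationality lemma}$ together with the hyperplane of Lemma $\ref{hyperplane lowers filtration general}$ to transfer via excision to $\bb{P}^d$, apply the nullhomotopy of Proposition $\ref{projective space we key}$, and then discard $Z^+$ by the Step 4 shrinking argument before verifying the filtration levels of the resulting supports. This matches the paper's argument in structure and in all essential details.
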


\begin{proof} Suppose given $Z_1 \subset S_1(X), Z_2 \subset S_2(X)$, and suppose $Z_1 \into Z_2$.  Induced supports are handled in the same manner as in the proof of Proposition $\ref{affine tower collapse}$, so we assume none of the supports are induced.  We aim to find $Y_1 \subset S_1(X), Y_2 \subset S_2(X)$ such that $E^{Z_2 \setminus Z_1}(X \setminus Z_1) \to E^{Z_2 \cup Y_2 \setminus Z_1 \cup Y_1}(X \setminus Z_1 \cup Y_1)$ is a nullhomotopy.

Now choose a very ample divisor $H$ as in Lemma $\ref{hyperplane lowers filtration general}$, and a finite morphism $f : X \to \bb{P}^d$ as in Proposition $\ref{projective birationality lemma}$ for the family $Z_2$.  Let $Z_2' \subset S_1(X)$ denote the union of the supports obtained in Lemma $\ref{hyperplane lowers filtration general}$ and Proposition $\ref{projective birationality lemma}$.  Then $Z_2 \cap Z_2' \supseteq (Z_2 \cap H) \cup \Exc (1 \times f \times 1|_{Z_2})$.  By Nisnevich excision and cofiber comparison (as in Step 2 of the proof of Proposition $\ref{affine tower collapse}$), we deduce a weak equivalence
$$E^{f(Z_2) \setminus f( Z_1 \cup (Z_2 \cap Z_2'))}(\bb{P}^d \setminus f(Z_1 \cup Z_2') ) \xrightarrow{\sim} E^{Z_2 \setminus Z_1 \cup (Z_2 \cap Z_2')}(X \setminus Z_1 \cup Z_2').$$

Now set $Z = Z_2, W= f(Z), Z' = Z_1 \cup Z_2'$, and $W' := f(Z') \cup W^e$.  Here $W^e$ is an extension of $W \cap H$ to a dominant support which is quasi-finite away from $H$ and hence belongs to the support condition $S_1(\bb{P}^d)$. We apply Proposition $\ref{projective space we key}$ to conclude the canonical map
$$E^{W \setminus W' \cap W}(\bb{P}^d \setminus W' ) \to E^{\oline{F^*W^o} \setminus \oline{\phi^{-1}(W^o)} \cup \oline{F^*W'^o}} (\bb{P}^d \setminus \oline{\phi^{-1}(W^o)} \cup \oline{F^*W'^o}  )$$
is a nullhomotopy.  Therefore the map from $E^{Z \setminus Z' \cap Z}(X \setminus Z')$ to
$$E^{Z \cup f^{-1}(\oline{R^* W^o}) \setminus f^{-1}(\oline{\phi^{-1}(W^o)}) \cup f^{-1}(\oline{F^*W'^o}) \cup  (f^{-1}(\oline{R^*W}) \cap  Z^+) }(X \setminus f^{-1}(\oline{\phi^{-1}(W^o)}) \cup f^{-1}(\oline{F^*W'^o}) \cup  Z^+)$$
is also a nullhomotopy.  Finally by the same support-shrinking argument as in Step 4 of the proof of Proposition $\ref{affine tower collapse}$, we conclude the morphism from $E^{Z \setminus Z' \cap Z}(X \setminus Z')$ to
$$E^{Z \cup f^{-1}(\oline{R^* W^o}) \setminus f^{-1}(\oline{\phi^{-1}(W^o)}) \cup f^{-1}(\oline{F^*W'^o})}(X \setminus f^{-1}(\oline{\phi^{-1}(W^o)}) \cup f^{-1}(\oline{F^*W'^o}))$$
is a nullhomotopy.  We are finished provided we show this map occurs in the cofiber of the map $E^{S_1}(X) \to E^{S_2}(X)$.  So we must verify the following claims:
\begin{enumerate}
\item $Z \cup f^{-1}(\oline{R^* W^o}) \subset S_2(X)$,
\item $f^{-1}(\oline{\phi^{-1}(W^o)}) \subset S_1(X)$, and
\item $f^{-1}(\oline{F^*W'^o}) \subset S_1(X)$.
\end{enumerate}

The operation $Z \mapsto F^*Z$, hence also $Z \mapsto R^*Z$, preserves the tower $\ref{support tower}$.  The functoriality properties $\ref{elem support properties}(\ref{elem support properties 2},\ref{elem support properties 3})$ imply $f$ preserves the tower $\ref{support tower}$.  The claim (1) follows.

Since $Z_2 \subset S_2(X)$, we have $W \subset S_2(\bb{P}^d)$.  By Theorem $\ref{projective suslin}$(3), it follows that $\oline{\phi^{-1}(W^o)} \subset S_1(\bb{P}^d)$.  The claim (2) follows.

We have verified the components of $W'$ belong to the support condition $S_1(\bb{P}^d)$, hence we have $\oline{W'^o} = W' \subset S_1(\bb{P}^d)$.  Then (3) follows from the compatibility of the tower $\ref{support tower}$ with the operations $Z \mapsto F^*Z$ and $Z \mapsto f^* (f_* (Z))$.  This completes the proof. \end{proof}

\begin{corollary} \label{main comparison projective} Let $X$ be a smooth equidimensional projective $k$-scheme.  Suppose $E \in {\bf{Spt}}(k)$ satisfies Nisnevich excision.  Then $\alpha_X : E^Q(X) \to E^{(q)}(X)$ is a weak equivalence. \end{corollary}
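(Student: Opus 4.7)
The plan is to obtain the corollary by telescoping the weak equivalences provided by the preceding theorem through the composite filtration refining $\alpha_X$. Concretely, I would combine the codimension-of-image filtration (\ref{support tower}), which interpolates between $S^{(q),e-1}(X)$ and $S^{(q),e}(X)$, with the fiber-dimension filtration (\ref{dim support tower}), which interpolates between $Q(X) = S^{(q),0}(X)$ and $S^{(q),q}(X) = S^{(q)}(X)$. Since $X$ is smooth projective of dimension $d$, both filtrations are finite, so the composite refines $\alpha_X : E^Q(X) \to E^{(q)}(X)$ into finitely many steps, each of which is a successive pair $(S_1, S_2)$ in the tower (\ref{support tower}).

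First I would apply the preceding theorem to each such successive pair $(S_1, S_2)$. This yields that $E^{S_1}(X) \to E^{S_2}(X)$ is a weak equivalence for each step, and composing them gives the weak equivalence $E^Q(X) \xrightarrow{\sim} E^{(q)}(X)$ at the level of full total spectra, provided we can pass between truncated and untruncated versions. The subtlety here is that the geometric moving arguments furnish a nullhomotopy only for a given finite subfamily of supports $Z \subset S_{\leq N}(X)$ at bounded simplicial degree; what the theorem actually delivers is that $E^{S_1}(X)_{\leq N} \to E^{S_2}(X)_{\leq N}$ is a weak equivalence for every $N$.

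To bridge the truncated and full statements, I would invoke the standard fact (already cited in the paper's filtration discussion) that the $r$th homotopy group of a simplicial spectrum is determined by a sufficiently large simplicial truncation depending only on $r$. Thus, from the theorem's assertion that $E^{(q),e-1,0}(X)_{\leq N} \to E^{(q),e,d+N}(X)_{\leq N}$ and each $E^{(q),e,f}(X)_{\leq N} \to E^{(q),e,f-1}(X)_{\leq N}$ is a weak equivalence for all $N$, one concludes $E^{(q),e-1}(X) \to E^{(q),e}(X)$ is a weak equivalence on the level of full realizations, for every $e = 1, \ldots, q$. Composing these $q$ weak equivalences yields $\alpha_X$ is a weak equivalence.

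The only potentially delicate point is the truncation-to-untruncation passage, but this is now routine given how the proof is organized: the intermediate conditions $S^{(q),e,f}_n(X)$ are genuine support conditions (they define simplicial subsets closed under faces and degeneracies), so restricting to degree $\leq N$ yields truncated simplicial spectra whose realizations stabilize to the full one in each homotopy degree. I do not anticipate a substantive obstacle here; the corollary is essentially a bookkeeping statement assembling the preceding theorem's step-by-step weak equivalences into the advertised $\alpha_X$.
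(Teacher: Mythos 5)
Your proposal is correct and follows essentially the same route as the paper: the corollary is obtained by applying the immediately preceding theorem to each successive pair in the tower $\ref{support tower}$ and invoking the truncation argument from Section 2 (that the $r$th homotopy group of a simplicial spectrum depends only on a truncation in simplicial degree depending on $r$) to pass from the truncated weak equivalences to the full ones, then composing the finitely many steps refining $\alpha_X$. The paper treats this as immediate bookkeeping and gives no separate proof, which matches your assessment.
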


\begin{remark} \label{qproj failure} Our methods fall short of establishing the result for a general smooth quasi-projective $X$ for the following simple reason.  In general we can find a quasi-finite map $f : X \to \bb{P}^d$, but the operation of pushing a cycle down, moving it on $\bb{P}^d$, then pulling it back, involves taking the closure of the cycle on $\bb{P}^d$.  The support conditions $S^{(q),e,f}(-)$ are not, in general, compatible with taking closures.  In particular there is no guarantee the closure of a cycle lying in some filtration level will even intersect all the faces properly.  Put differently, the quasi-finite morphism $f$ factors as an open immersion (say, $j : X \subset \oline{X}$) followed by a finite morphism, and there is no reason to expect closures of cycles in $S^{(q)}(X)$ to lie in $S^{(q)}(\oline{X})$, even if we are given a smooth compactification $\oline{X}$ of $X$. \end{remark}

\bibliography{tspecreferences}{}
\bibliographystyle{plain}

\end{document}